\titleformat{\subsubsection}[runin]
{\normalfont\normalsize\bfseries}{\thesubsubsection}{1em}{}
\numberwithin{equation}{section}
\newcommand{\inclu}[0] {\ar@{^{(}->}}
\newcommand{\muhstar}{\frac{1}{2L_h}}
\newcommand{\prox}{\text{prox}}
\newcommand{\dom}[1]{\mathrm{dom }(#1)}
\newcommand{\argmin}{\operatornamewithlimits{argmin}}
\newcommand{\argmax}{\operatornamewithlimits{argmax}}
\newtheorem{thm}{Theorem}[section]
\newtheorem{theorem}[thm]{Theorem}
\newtheorem{definition}[thm]{Definition}
\newtheorem{proposition}[thm]{Proposition}
\newtheorem{lemma}[thm]{Lemma}
\newtheorem{corollary}[thm]{Corollary}
\theoremstyle{plain}
\crefname{claim}{claim}{claims}
\Crefname{claim}{Claim}{Claims}
\crefname{lem}{lemma}{lemmas}
\Crefname{lem}{Lemma}{Lemmas}
\crefname{algorithm}{algorithm}{algorithms}
\Crefname{algorithm}{Algorithm}{Algorithms}
\newtheorem{remark}[thm]{Remark}
\theoremstyle{remark}
\newcommand{\eps}{\varepsilon}
\newcommand{\Lada}{L_{\eps,r}^{\mathtt{ADA}}}
\newcommand{\muada}{\mu_{\eps}^{\mathtt{ADA}}}
\newcommand{\Neps}{N_{\eps,r}}
\newcommand{\Peps}{P_{\eps,r}}
\newcommand{\LogBigO}{\Tilde{\mathcal{O}}}
\newcommand{\bigO}{\mathcal{O}}
\newcommand{\conv}[1]{\textrm{conv}({#1})}
\newcommand{\R}{\mathbb{R}}
\newcommand{\Lcal}{\mathcal{L}}
\newcommand{\hLcal}{{\mathcal{L}}}
\newcommand{\hQ}{{Q}}
\newcommand{\Z}{\mathcal{Z}}
\newcommand{\X}{\mathcal{X}}
\newcommand{\Holder}{H\"older }
\newcommand{\inner}[2]{\left\langle {#1}, {#2} \right\rangle}
\pgfplotsset{compat=1.18} 
\definecolor{blue}{rgb}{0,0,0}
\begin{document}

	\title{A Universally Optimal Primal-Dual Method\\ for Minimizing Heterogeneous Compositions}

	 \author{Aaron Zoll\footnote{Johns Hopkins University, Department of Applied Mathematics and Statistics, \url{azoll1@jhu.edu}} \qquad Benjamin Grimmer\footnote{Johns Hopkins University, Department of Applied Mathematics and Statistics, \url{grimmer@jhu.edu}}}

	\date{}
	\maketitle

\begin{abstract}
        This paper proposes a universal algorithm for convex minimization problems of the composite form $g_0(x)+h(g_1(x),\dots, g_m(x)) + u(x)$. We allow each $g_j$ to independently range from being nonsmooth Lipschitz to smooth, from convex to strongly convex, described by notions of H\"older continuous gradients and uniform convexity. Note that, although the objective is built from a heterogeneous combination of such structured components, it does not necessarily possess smoothness, Lipschitzness, or any favorable structure overall other than convexity. Regardless, we provide a universal optimal method in terms of oracle access to (sub)gradients of each $g_j$. The key insight enabling our optimal universal analysis {\color{blue} and a core technical contribution} is the construction of two new constants, the Approximate Dualized Aggregate smoothness and strong convexity, which combine the benefits of each heterogeneous structure into single quantities amenable to analysis.
        As a key application, fixing $h$ as the nonpositive indicator function, this model readily captures functionally constrained minimization $g_0(x)+u(x)$ subject to $g_j(x)\leq 0$. In particular, our algorithm and analysis are directly inspired by the smooth constrained minimization method of Zhang and Lan and consequently recover and generalize their accelerated guarantees.
        \end{abstract}

\maketitle

\section{Introduction}

This paper considers the design of scalable first-order methods for the following quite general class of convex optimization problems. Given closed, convex functions $g_j,\ u\colon \X \rightarrow \mathbb{R}\cup\{+\infty\}$ for $j=0, \dots, m$, a closed, convex, component-wise nondecreasing function $h\colon \mathbb{R}^m\rightarrow \mathbb{R}\cup\{+\infty\}$, and a closed, convex constraint set $\X\subseteq\mathbb{R}^n$, we consider the convex composite optimization problem
\begin{equation} \label{eq:main-composite-problem}
    p_\star = \min_{x\in \X} F(x) := g_0(x) + h(g_1(x),\dots,g_m(x)) +u(x)\ .
\end{equation}
We are particularly interested in {\it heterogeneous} settings where the components $ g_j$ forming the overall objective $F$ vary in their individual smoothness (or lack thereof) and convexity. The convex composite model~\eqref{eq:main-composite-problem} is well-studied and captures a range of standard optimization models:
\begin{itemize}
    \item {\it Minimization of Finite Summations.} As perhaps the most basic composite setup, minimization of finite sums $h(z)=\sum z_j$, where each $z_j=g_j(x)$ is one component of the objective, is widespread in machine learning and data science applications. The optimization of objective functions built from heterogeneous sums of smooth and nonsmooth components was recently considered by the fine-grained theory of~\citep{diakonikolas2024optimizationfinerscalebounded} and the bundle method theory of~\citep{liang2023}. Universal, optimal guarantees for the minimization of any sum of heterogeneously smooth components via Nesterov's universal fast gradient method~\citep{Nesterov_2014} were given by~\citep{grimmer2023optimal}.
    
    \item {\it Functionally Constrained Optimization.} Considering the composing function as the indicator function $h(z)=\iota_{z\leq 0}(z)$ for $z_j=g_j(x)$, this model recovers the standard notion of functionally constrained optimization. This setting has been studied significantly, with the recent smooth constrained optimization work of~\citep{zhang2022solving} being a particular motivation for this work. Constrained optimization handles a large class of problems with applications to machine learning, statistics, and signal processing~\citep{Lan.G,bregman,kavis2019unixgraduniversaladaptivealgorithm,Polak1997}.
  
    \item {\it Minimization of Finite Maximums.} Our model also captures minimizing finite maximums: $h(z)=\max_j z_j$ of several component functions $z_j =g_j(x)$~\citep{Polak1997}. For example, such objectives arise as a fundamental model in game theory, in robust optimization seeking good performance across many objectives~\citep{BenTal2009}, and in the radial optimization framework of~\citep{Grimmer2021-part1,Grimmer2021-part2}.
    
    \item {\it Smoothed Finite Maximum and Constrained Optimization} Finally, we provide two convex composite examples that address the previous two models in a smoothed setting. First, for applications minimizing the maximum of several functions $g_j(x)$, one can instead minimize an $\eta$-smoothing of the max function~\citep{smoothing_fom}: for some $\eta>0$, consider $h_\eta(z) = \eta \log(\sum_{j=1}^m \exp(z_j/\eta))$. As $\eta$ tends to zero, this converges to $\max_j z_j$ but becomes less smooth. Second, consider $h_\eta(z) = \sum_{j=1}^m \max\{z_j/\eta,0\}^2$, providing a smooth penalty for any constraint function violating nonpositivity. 
\end{itemize}

Here we address convex composite problems~\labelcref{eq:main-composite-problem}, assuming $u$, $h$, and $\X$ are reasonably simple (i.e., have a computable proximal/projection operator). Note that this captures all four of the above application settings. We allow $g_j$ to vary significantly in structure (i.e., ranging from nonsmooth Lipschitz to having Lipschitz gradients and from simple convexity to strong convexity). \cref{section:Holder-Compositions} and \cref{section:UC-restarting-comp} present the considered heterogeneous models of H\"older smoothness and uniform convexity formally, but we give the following definitions here. We say that $f$ is $(L,p)$-\Holder smooth for $p \in [0,1]$ if its gradient is \Holder continuous: \begin{equation}\label{eq:intro-Holder-def}\|\nabla f(x)-\nabla f(y)\| \leq L\|x-y\|^p \quad \forall x,y \in \dom{f} \ .\end{equation} As an immediate consequence of the fundamental theorem of calculus, \begin{equation}\label{eq:intro-Holder-consequence}
    f(y) \leq f(x)+\inner{\nabla f(x)}{y-x}+\frac{L}{p+1}\|y-x\|^{p+1} \quad \forall x,y \in \dom{f} \ .
\end{equation} Conversely, we say that $f$ is $(\mu,q)$-uniformly convex for $q \geq 1$ if\begin{equation}\label{eq:intro-UC-def}f(y) \geq f(x)+\inner{\nabla f(x)}{y-x}+\frac{\mu}{q+1}\|y-x\|^{q+1} \quad \forall x,y \in \dom{f} \ . \end{equation} We note that allowing each $g_j$ to satisfy these conditions with their own $(L_j,p_j)$ and $(\mu_j,q_j)$ does not guarantee $F$ possesses any of these favorable structures besides being simply convex. Despite this lack of centralized structure, this work presents a simple first-order method attaining optimal convergence guarantees, combining and leveraging whatever structure is present in each component.

Algorithms that can be applied optimally across a range of structurally different problem settings are known as {\it universal} methods. Universality is a key property for developing practical algorithms capable of being widely deployed in blackbox fashion.
For the case of minimizing a single function $f$ ranging in its \Holder smoothness, optimal universal methods were first pioneered by Lan~\citep{lan_uniformly_optimal, lan_bundle_optimal} and Nesterov~\citep{Nesterov_2014, nesterov_universal_stochastic}. {\color{blue}Further work on universal methods allowing for convex hybrid composite models~\citep{guigues2025universalsubgradientproximalbundle, liang2023},} heterogeneous summations~\citep{wang2022gradientcomplexityanalysisminimizing,grimmer2023optimal}, varied growth structures~\citep{Park_2022, ito2023parameterfreeconditionalgradientmethod}, constrained optimization~\citep{zhang2022solving,kavis2019unixgraduniversaladaptivealgorithm,deng2024uniformlyoptimalparameterfreefirstorder}, and {\color{blue} stochastic optimization}~\citep{Aybat2019, Rodomanov_2024} has followed since. To varying degrees, the above works developed algorithms that are ``mostly'' parameter-free, potentially relying on a target accuracy $\eps$, an upper bound on the diameter of $\X$, or similar universal problem constants. Without additional parameters, stopping criteria indicating when a target accuracy is reached are often unavailable. Hence, although the above methods apply universally, they vary in how parameter-free they are.

As a concrete example of a universal method, the Universal Fast Gradient Method (UFGM)~\citep{Nesterov_2014} can optimally minimize $F=g_0+u$, with the structure of $g_0$ ranging from smooth to nonsmooth. This setup is modeled by supposing $g_0$ is convex with $(L,p)$-H\"older continuous gradient, corresponding to Lipschitz gradients when $p=1$ and Lipschitz functions value when $p=0$. The UFGM, given target accuracy $\eps>0$ as an input, produces a point with at most $\eps$ objective gap in either of these settings and in every intermediate one, using at most 
\begin{equation}\label{eq:optimal-holderSmooth-rate}
   K_{SM}(\eps, L, p, \|x^0-x^\star\|) =   {\bigO}\left(\left(\frac{L}{\eps}\right)^{\frac{2}{1+3p}}\|x^0-x^\star\|^{\frac{2+2p}{1+3p}}\right)
\end{equation}
(sub)gradient oracle evaluations for $g_0$. The matching lower bounds cited in~\citep[page 26]{Nemirovski_restarting} establish that this rate is optimal for every $p\in[0,1]$. Given additional structure, like $(\mu,q)$-uniform convexity of $g_0$, a universal restarting scheme like~\citep{grimmerrestarting} can be applied to achieve the optimal, faster rates of
\begin{align}\label{eq:optimal-holderSmoothUniformlyConvex-rate} 
\begin{split}
K_{UC}(\eps, L, p, \mu, q) = \begin{cases}{\bigO}\left(\left(\frac{L^{1+q}}{\mu^{1+p}\eps^{q-p}}\right)^{\frac{2}{(1+3p)(1+q)}}\right) &\quad \text{if } q>p \\
    {\bigO}\left(\left(\frac{L^{1+q}}{\mu^{1+p}}\right)^{\frac{2}{(1+3p)(1+q)}}\log\left(\frac{F(x^0)-F(x^\star)}{\eps}\right)\right)  &\quad \text{if }q=p
    \end{cases}
    \end{split} 
\end{align}
(sub)gradient oracle evaluations {\color{blue} with respect to $g_0$}, up to logarithmic factors.

This work aims to develop a universal method for the composite setting~\eqref{eq:main-composite-problem}, allowing heterogeneity in the H\"older smoothness and uniform convexity of each $g_j$, capturing and generalizing the settings of the above universal methods. Our proposed Universal Fast Composite Method (UFCM) is formally defined in \Cref{alg:FCM}. When restarting is included, we denote it by R-UFCM, defined in \Cref{alg:R-FCM}. Our method is not parameter-free, depending on the following three main parameters: a target accuracy $\eps>0$, an Approximate Dualized Aggregate smoothness $\Lada$ capturing the combined effect of any upper bounding curvature present among the composition, and finally, an Approximate Dualized Aggregate convexity $\muada$ capturing the combined effect of any lower bounding curvature. {\color{blue} The invention of these unifying constants, abstracting and simplifying any complex dependence on individual components' H\"older smoothness and uniform convexity constants and exponents, is key to our algorithm's success.}
We formally define the latter parameters in \labelcref{eq:Lada-definition-general} and \labelcref{eq:muada-def-general}.

{\color{blue} We note that one may tradeoff knowledge of $\Lada$ for knowledge of bounds on the initial distances to optimal, $D_x$ and $D_\lambda$, without affecting big-$\bigO$ oracle complexities. Further discussion is presented in \cref{rmk:tradeoff-lada-dist}. The design of entirely parameter-free methods, avoiding knowledge of these aggregate constants and distance bounds, is left as an important future direction. The parameter-free techniques of~\citep{Nesterov_2014, adabbadaptivebarzilaiborweinmethod,  li2024simpleuniformlyoptimalmethod} may be useful to this extent.}

Measuring the convergence of a method requires a suitable notion of solution quality. Often, iterative methods seek to produce a solution $x^t$ with a bounded objective gap:
\begin{equation} \label{eq:bounded-suboptimality}
    F(x^t) - p_\star \leq \eps \ . 
\end{equation}
However, for general composite problems~\eqref{eq:main-composite-problem}, since $F$ is allowed to take infinite value arbitrarily near a minimizer (an important attribute for modeling constrained optimization as discussed above), our iterative schemes for minimizing $F$ do not directly provide a solution $x^t$ with bounded suboptimality. Instead, we identify $(\eps,r)$-optimal $x^t$ defined as there existing $\hat{g}\in\mathbb{R}^m$ and a subgradient $\hat\lambda \in \partial h(\hat{g})$ satisfying
\begin{equation}
        \begin{cases}  g_0(x^t) + h(\hat g) + \langle \hat \lambda, g(x^t) - \hat g \rangle +u(x^t) - p_\star &\leq  \eps  \ , \\
        r\|g(x^t)-\hat g\| &\leq  \eps \ .
        \end{cases}\label{def:optimality-conditions}
    \end{equation}
Here, {\color{blue} $\hat{g}$ informally serves as a perturbed projection of $g(x^t)$ onto the domain of $\partial h$ and} $r$ is a hyperparameter that one can fix proportional to $\sqrt{\eps}$ to obtain simply an ``$\eps$-optimal'' solution where $\|g(x^t)-\hat g\|^2 \lesssim \eps$ (\cref{lemma:Q-small->eps-opt} introduces $r$ and discusses its meaning as a radius for dual multipliers). 

This condition states $x^t$ nearly attains the optimal objective value when the outer composition function $h$ is linearized via a subgradient $\hat \lambda$ taken at a nearby $\hat g$. For example, in the context of constrained minimization where $h$ is an indicator function for the nonpositive orthant, $\hat \lambda$ is precisely a {\color{blue} nonnegative} vector of Lagrange multipliers, making the above conditions correspond to the approximate attainment of the KKT conditions. {\color{blue} In this case, $\hat{g}$ is a perturbed projection of $g(x^t)$ onto the nonpositive orthant. By construction, $\hat \lambda$ and $\hat g$ are orthogonal, and the conditions for an $(\eps,r)$-optimal solution correspond to $$\begin{cases}
    g_0(x^t) + \inner{\hat{\lambda}}{g(x^t)}+u(x^t)-p_\star & \leq \eps \ , \\
    r\|g(x^t)-\hat{g}\| & \leq \eps \ ,\\
    \hat{g}  \leq 0, \
    \hat{\lambda} \geq 0 \ .
 \end{cases}$$
 The first condition states that $x^t$ approximately minimizes the Lagrangian at $\hat{\lambda}$. Approximate primal feasibility follows from the second and third conditions establishing $\mathrm{dist}(g(x^t), \R^m_{-}) \leq \eps/r$. Dual feasibility follows from the nonnegativity of $\hat\lambda$. Finally, approximate complementary slackness follows from the orthogonality of $\hat{\lambda}$ and $\hat{g}$ as $|\langle \hat{\lambda}, g(x^t)\rangle| = |\langle \hat{\lambda}, g(x^t)-\hat{g}\rangle| \leq \|\hat{\lambda}\|\eps/r$.
    }

    As a second example, when $h$ is a linear function (e.g. when directly minimizing a sum of component functions), one has $h(g(x^t)) = h(\hat g) + \langle \hat \lambda, g(x^t) - \hat g \rangle$ and so~\eqref{def:optimality-conditions} reduces to the classic bounded suboptimality measure~\eqref{eq:bounded-suboptimality}. {\color{blue} In this case, $\hat{g}=g(x^t)$. Further discussion on the roles and values of $\hat{g}$ and $\hat{\lambda}$ is in \cref{subsec:Q-for-compositions}}

 {\color{blue} Note in our developed algorithms, both $\hat{g}$ and $\hat{\lambda}$ are not explicitly constructed, and are generally inaccessible computationally. Hence, although our analysis guarantees such values exist certifying approximate optimality, we cannot certify at runtime when the iterate becomes $(\eps,r)$-optimal. This limitation cannot be improved: When $h$ is linear, our optimality condition~\eqref{def:optimality-conditions} reduces to suboptimality $F(x^t) - p_\star \leq \epsilon$, which cannot be certified without knowledge of $p_\star$ or additional global structure.}

\subsection{Our Contributions}\label{section:contributions} 
This work develops a universal primal-dual method for heterogeneous compositions~\eqref{eq:main-composite-problem} {\color{blue} with optimal first-order complexity with respect to components $g_j$}. Our proposed UFCM and its restarted variant R-UFCM leverage the sliding technique of~\citep{lan2014gradientslidingcompositeoptimization} and the ``Q-analysis'' technique of~\citep{zhang2022solving}, {\color{blue} originally} designed for smooth constrained optimization. As a key contribution to this end, we propose new notions of the Approximate Dualized Aggregate smoothness constant $\Lada$ and the Approximate Dualized Aggregate convexity constant $\muada$. {\color{blue} These two constants provide a new unifying technical tool for the analysis of heterogeneous optimization that may be of independent interest.} From these, we prove the oracle complexities outlined in Table~\ref{tab:composite-heterogeneous-rates}. For example, in the simple setting of minimizing $g_0(x)+u(x)$, these rates recover the optimal {\color{blue} suboptimality convergence} rates of~\eqref{eq:optimal-holderSmooth-rate} and~\eqref{eq:optimal-holderSmoothUniformlyConvex-rate}.

\begin{table}[t]
\centering

\newcommand{\cellstrut}{\rule{0pt}{6.3ex}\rule[-4.0ex]{0pt}{0pt}}
\newcommand{\smallcellstrut}{\rule{0pt}{3.2ex}\rule[-2.0ex]{0pt}{0pt}} 
\newcommand{\mucellstrut}{\rule{0pt}{5.2ex}\rule[-4.0ex]{0pt}{0pt}} 

\newcolumntype{M}{>{\centering\arraybackslash}m{4.4cm}}
\newcolumntype{N}{>{\centering\arraybackslash}m{4cm}}
\newcolumntype{S}{>{\centering\arraybackslash}m{2.8cm}}
\newcolumntype{X}{>{\centering\arraybackslash}m{1.8cm}}

\begin{tabular}{|X|S|M|N|}
\hline
\smallcellstrut \multirow{2}{0cm}{} &
\smallcellstrut \multirow{2}{2.8cm}{%
  \parbox[c][1.1cm][c]{2.8cm}{\centering\textit{\textbf{First-Order Oracle Calls to Components} $g$}}%
}&
\multicolumn{2}{c|}{%
  \parbox[c][1.8cm][c]{\dimexpr4cm+4.4cm\relax}{\centering
  \textit{\textbf{Proximal Oracle Calls to} $h$ \textbf{and} $u$}}%
} \\
\cline{3-4}
& &
\smallcellstrut ${ \scriptstyle {L_h \ > \ {D_\lambda^2}/{\eps}}}$ &
\smallcellstrut  ${ \scriptstyle L_h \ \leq \ {D_\lambda^2}/{\eps}} $\\
\hline
 \mucellstrut ${\scriptstyle \mu_\eps^{\mathtt{ADA}}\ < \ \eps/D_x^2}$  &
\cellstrut $\displaystyle\sqrt{\frac{\Lada D_x^2}{\eps}}$  &
\cellstrut $\displaystyle\sqrt{\frac{\Lada D_x^2}{\eps}}+\frac{MD_xD_\lambda}{\eps}$ &
\cellstrut $\displaystyle\sqrt{\frac{(\Lada + M^2L_h)D_x^2}{\eps}}$\\
\hline
\mucellstrut ${\scriptstyle \mu_\eps^{\mathtt{ADA}} \ \geq \ \eps/D_x^2}$  &
\cellstrut $\displaystyle\sqrt{\frac{\Lada}{\muada}}\log\left(\frac{1}{\eps}\right)$ &
\cellstrut $\displaystyle\sqrt{\frac{\Lada}{\muada}}\log\left(\frac{1}{\eps}\right)+\frac{MD_\lambda}{\sqrt{\muada \eps}}$ &
\cellstrut $\displaystyle \sqrt{\frac{\Lada+M^2L_h}{\muada}}\log\left(\frac{1}{\eps}\right)$\\
\hline
\end{tabular}

  \caption{Oracle complexities in terms of universal parameters $\Lada$ and  $\muada$, proven in Theorem~\ref{thm:fully-heteogeneous-component-guarantees}, up to constants and additive logarithmic terms in $\eps$. Here, $D_x$ and $D_\lambda$ denote bounds on the initial primal and dual distances to optimality, and $M$ denotes a local Lipschitz constant. 
    } \label{tab:composite-heterogeneous-rates}
\end{table}

\noindent For ease of exposition, we develop our convergence theory incrementally through three main theorems:\\
--{Theorem~\ref{thm:smooth-composite-convergence}} establishes an $\bigO(1/\sqrt{\eps})$ rate towards $\eps$-optimality when each $g_j$ is smooth and convex. Hence, smooth composite optimization is nearly as easy as unconstrained smooth optimization.\\ 
--{Theorem~\ref{thm:heterogeneous-composite-convergence}} generalizes this analysis to establish optimal rates when each $g_j$ is convex with varying H\"older continuous gradient~\eqref{eq:intro-Holder-def}, recovering~\eqref{eq:optimal-holderSmooth-rate} as a special case.\\
--{Theorem~\ref{thm:fully-heteogeneous-component-guarantees} }finally leverages standard restarting techniques to establish optimal rates when the components additionally possess varying uniform convexity~\eqref{eq:intro-UC-def}, recovering~\eqref{eq:optimal-holderSmoothUniformlyConvex-rate} as a special case.

{\color{blue} \begin{remark}
    Note that these rates are only optimal for the first-order complexity with respect to the components $g_j$, not necessarily the proximal oracle complexity. The work of~\citep{efficient_prox_paper} shows the latter can be improved to $\bigO\left(1/\eps\right)$ when $g_0$ is nonsmooth and Lipschitz whereas our method requires $\bigO\left(1/\eps^2\right)$ proximal evaluations in such nonsmooth settings.
\end{remark}}
{\color{blue}

\subsection{Example of our Universal Constants $\Lada$ and $\muada$ and  an Application of Convergence Rates.}
Our ability to provide universal guarantees across heterogeneous problem settings is primarily enabled by the design of our Approximate Dualized Aggregate smoothness $\Lada$ and strong convexity $\muada$ constants. Although we defer formal definitions of these to \labelcref{eq:Lada-definition-general} and \labelcref{eq:muada-def-general}, here we briefly discuss their essential properties and consequences.
These constants are ``approximate'' in that they depend on the target accuracy $\epsilon$, ``dualized'' in that they depend on associated optimal dual multipliers $\lambda_j^\star$, and ``aggregate'' in that they combine these dependencies and every problem parameter $(L_j,p_j)$, $(\mu_j,q_j)$, etc.~into a single constant. From these constants, we find that the traditional smooth and smooth, strongly convex rates $\bigO\left(\sqrt{\frac{\Lada D_x^2}{\eps}}\right)$ and $\bigO\left(\sqrt{\frac{\Lada}{\muada}} \log\left(\frac{1}{\eps}\right)\right)$ hold for generic heterogeneous composite settings.

These unifying constants are graceful in their dependence on dual multipliers $\lambda_j^\star$: the dependence on the $j$th component's $(L_j,p_j)$ and $(\mu_j,q_j)$ vanishes as $\lambda_j^\star$ tends to zero. In constrained optimization, $\lambda_j^\star=0$ corresponds to the constraint being inactive at the optimal solution. Hence, inactive constraints play a vanishing role in our convergence rates (as one would hope). As a more concrete example, consider minimizing a finite maximum $h(g_1(x), g_2(x))$ with $h=\max\{z_1,z_2\}$ of an $L$-smooth function $g_1$ and an $M$-Lipschitz nonsmooth function $g_2$. Here, the optimal dual multiplier $\lambda^\star \in [0,1]$ describes the activity of each component at the minimizer, $\lambda^\star=0$ if the problem reduces to minimizing the smooth component, $\lambda^\star=1$ if the problem reduces to minimizing the Lipschitz component, and $\lambda^\star\in (0,1)$ if both are active. \cref{cor:Lada-general-implicit-formula} shows that our gradient complexity guarantees for such problems simply decompose into the sum of each component's complexity separately, weighted by its dual multiplier plus $r$,
$$ \bigO\left(\sqrt{\frac{(1-\lambda^\star + r)L D_x^2}{\eps}}+\frac{((\lambda^\star +r) M)^2 D_x^2}{\eps^2}\right) \ . $$
Selecting $r=O(\epsilon^{3/4})$, this bound transitions from the optimal accelerated smooth rate $O(1/\sqrt{\epsilon})$to the optimal nonsmooth rate $O(1/\epsilon^2)$ gracefully as $\lambda^\star\in [0,1]$ varies.

This recovery of the optimal rates~\eqref{eq:optimal-holderSmooth-rate} when there is a single active component establishes near optimality of our guarantees with respect to first-order oracle evaluations of the component functions $g_j$. Any improvement in our dependencies in $\bigO\left(\sqrt{\frac{\Lada D_x^2}{\eps}}\right)$ and $\bigO\left(\sqrt{\frac{\Lada}{\muada}} \log\left(\frac{1}{\eps}\right)\right)$ beyond a log term would violate the lower bounds stated by~\citep{Nemirovski_restarting}.

}

\paragraph{Outline.}
 Section~\ref{Heterogeneous-Comp-Prelim} introduces preliminaries as well as the sliding technique and ``Q-analysis'' discussed in~\citep{zhang2022solving} for solving constrained optimization. Section~\ref{section:Smooth-Composite} extends this method to smooth composite optimization, proving optimal guarantees. Section~\ref{section:Holder-Compositions} generalizes to functions with H\"older continuous gradient. Finally, Section~\ref{section:UC-restarting-comp} generalizes to allow heterogeneous levels of uniform convexity. 

\section{Preliminaries}\label{Heterogeneous-Comp-Prelim}

We define our notation to align with~\citep{zhang2022solving}'s prior work in constrained optimization. 
First, without loss of generality we set $g_0(x)=0$ as one can consider instead minimizing $0+\hat h(g_0(x),\dots g_m(x)) +u(x)$ with $\hat h(z_0,z_1,\dots z_m) = z_0 + h(z_1,\dots z_m)$. Hence, it suffices to consider problems of the form
\begin{equation}\label{eq:WLOG-main-problem}
    \min_{x \in \X} F(x) := h(g_1(x), \dots , g_m(x)) +u(x) \ ,
\end{equation}
defined by a closed, convex set $\X \subseteq \R^n$ and the following closed, convex functions: regularizing function $u \colon \X \to \R \cup \{+\infty\}$, composing function $h \colon \R^m \to \R \cup \{+\infty\}$, and component functions $g_j \colon \X \to \R $. Below, we describe the additional structures assumed on each function.

\paragraph{Assumed Structure of Objective Components $g, h,u$.}
We assume each $g_j$ is locally Lipschitz with some form of bounds on its curvature. We take each $g_j$ to be $L_j$-smooth (i.e., $\nabla g_j$ is $L_j$-Lipschitz) in \cref{section:Smooth-Composite} to set up the algorithmic framework and convergence results. In Sections~\ref{section:Holder-Compositions} and~\ref{section:UC-restarting-comp}, we allow the components to have varying levels of smoothness and varying levels of convexity, as defined in \labelcref{eq:intro-Holder-def} and \labelcref{eq:intro-UC-def}. Whatever structure is present in these components only arises in our convergence theory through the unifying parameters $\Lada$ and $\muada$, which aggregate any structures available, enabling our universal method and analysis.
We assume $\X$, $u$, and $h$ are sufficiently simple that their proximal operators can be evaluated, defined for any parameter $\tau>0$ as
\begin{align}
    \prox_{u,\tau}(x)&:=\argmin_{y \in \X} u(y)+\frac{\tau}{2} \|y-x\|^2 \label{eq:prox-def}, \\
    \prox_{h,\tau}(x)&:=\argmin_{y \in \R^m} h(y)+\frac{\tau}{2} \|y-x\|^2 \label{eq:h-prox-def}
\end{align}
respectively.

The algorithms designed herein are primal-dual, leveraging the convex (Fenchel) conjugates~\citep{Fenchel_1949} of $h$ and each $g_j$. For any convex function $f \colon \R^n \to \R \cup\{+\infty\}$, we denote its conjugate as
\begin{equation}
    f^*(s)=\sup_{x \in \R^n} \{\inner{s}{x}-f(x)\} \ . \label{eq:Fenchel-conjugate}
\end{equation}
Note Moreau's decomposition~\citep[Theorem 6.45]{bregman} shows $\prox_{h^*,\tau}(x)=x- \prox_{h,1/\tau}(\tau x)/\tau$ and so the assumed oracle access to $\prox_h$ via~\eqref{eq:h-prox-def} further provides access to $\prox_{h^*}$.

Finally, we assume $h$ is component-wise nondecreasing, which suffices to ensure the overall objective $F$ is convex. The following pair of standard lemmas formalize the resulting structures.
\begin{lemma}~\citep[Theorem 5.1]{rockafeller} If $h \colon \R^m \to \R \cup \{+\infty\}$ is convex and component-wise nondecreasing and $g \colon \R^n \to \R^m$ is component-wise convex, then $c(x) := h(g(x))\colon \R^n \to \R$ is convex.
\end{lemma}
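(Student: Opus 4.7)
The plan is to verify convexity directly from the definition by chaining together three inequalities corresponding to the three structural hypotheses. Fixing arbitrary $x, y \in \R^n$ and $\lambda \in [0,1]$, my goal becomes establishing
$$h(g(\lambda x + (1-\lambda) y)) \leq \lambda h(g(x)) + (1-\lambda) h(g(y)) \ ,$$
which I would obtain by composing the properties of $g$ and $h$ in a specific order.

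First, I would invoke component-wise convexity of $g$: for each coordinate $j$, one has $g_j(\lambda x + (1-\lambda) y) \leq \lambda g_j(x) + (1-\lambda) g_j(y)$, and so the vector inequality $g(\lambda x + (1-\lambda) y) \leq \lambda g(x) + (1-\lambda) g(y)$ holds component-wise. Second, since $h$ is component-wise nondecreasing, applying $h$ to both sides preserves the direction, yielding $h(g(\lambda x + (1-\lambda) y)) \leq h(\lambda g(x) + (1-\lambda) g(y))$. Third, convexity of $h$ bounds this right-hand side by $\lambda h(g(x)) + (1-\lambda) h(g(y))$. Concatenating these three estimates delivers exactly the required inequality.

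The only minor subtlety, which does not really rise to the level of an obstacle, is ensuring the monotonicity step is legitimate when $h$ takes the value $+\infty$. I would handle this by interpreting component-wise nondecreasing in the extended-real sense, so that $a \leq b$ coordinatewise forces $h(a) \leq h(b)$ with the convention $+\infty \leq +\infty$. After this, the chained estimate becomes vacuous whenever any term on the right-hand side is infinite and reduces to standard arithmetic otherwise. Since this is exactly the setting catalogued in Rockafellar's Theorem 5.1, no additional machinery is required, and one could equivalently cite that reference in lieu of reproducing the three-line argument.
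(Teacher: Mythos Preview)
Your proposal is correct and is the standard elementary argument. The paper does not actually give its own proof of this lemma; it simply cites Rockafellar's Theorem~5.1, which is exactly the reference you also invoke at the end. Your three-step chain (component-wise convexity of $g$, monotonicity of $h$, convexity of $h$) is the canonical proof and nothing more is needed.
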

\begin{lemma}\label{lemma:lambda nonneg} If $h \colon \R^m \to \R \cup \{+\infty\}$ is convex and component-wise nondecreasing, then $\dom{h^*} \subseteq \R_+^m$. 
\end{lemma}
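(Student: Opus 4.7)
The plan is to prove the contrapositive: if some $s \in \R^m$ has a strictly negative coordinate, then $h^*(s) = +\infty$, so $s \notin \dom{h^*}$. This directly uses the two defining features of $h$, namely convexity (to make the conjugate well-defined on $\dom h \ne \emptyset$) and component-wise monotonicity (to produce the required unbounded direction).

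First I would fix any $s \in \R^m$ with $s_i < 0$ for some coordinate $i$, and pick any $x_0 \in \dom{h}$, which exists since $h$ is a proper closed convex function. Then for every $t > 0$ I would consider the point $x_0 - t e_i$, where $e_i$ is the $i$-th standard basis vector. Because $x_0 - t e_i \leq x_0$ coordinate-wise and $h$ is component-wise nondecreasing, we have $h(x_0 - t e_i) \leq h(x_0) < +\infty$.

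Next I would plug this into the definition of the conjugate:
\begin{equation*}
    h^*(s) = \sup_{x \in \R^m}\{\langle s, x\rangle - h(x)\} \geq \langle s, x_0 - t e_i\rangle - h(x_0 - t e_i) \geq \langle s, x_0\rangle - h(x_0) - t s_i .
\end{equation*}
Since $s_i < 0$, the last term $-t s_i$ tends to $+\infty$ as $t \to \infty$, forcing $h^*(s) = +\infty$ and therefore $s \notin \dom{h^*}$. Taking the contrapositive yields $\dom{h^*} \subseteq \R_+^m$.

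There is no serious obstacle here; the whole argument is a one-line recession-direction observation dressed in the conjugate formula. The only minor point worth being explicit about is that $\dom h$ is nonempty, which is why we need $h$ to be proper (guaranteed by the paper's convention that $h$ is closed, convex, and used in a meaningful objective). Everything else is a direct manipulation of $\langle s, \cdot \rangle - h(\cdot)$ along the descent direction $-e_i$.
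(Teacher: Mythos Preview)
Your proof is correct and actually a bit more direct than the paper's. The paper argues via subdifferentials: it observes that component-wise monotonicity forces $h'(x;-e_j)\le 0$, hence every subgradient of $h$ lies in $\R_+^m$; then it uses the conjugate duality $x\in\partial h^*(s)\Leftrightarrow s\in\partial h(x)$ to conclude that any $s$ in (the interior of) $\dom{h^*}$ is nonnegative. Your argument bypasses the subdifferential machinery entirely, working straight from the definition of $h^*$ and sending the test point along the recession direction $-e_i$. This is more elementary, does not need the directional-derivative characterization of the subdifferential, and handles all of $\dom{h^*}$ at once without a separate interior/closure step. The paper's route has the mild advantage of making explicit the useful fact $\partial h(x)\subseteq\R_+^m$, which is morally what drives both arguments; your route has the advantage of needing nothing beyond monotonicity and properness.
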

\begin{proof}
    Since $h$ is component-wise nondecreasing and convex, at any $x \in \dom{h}$, \begin{align*}
        h'(x; -e_j) \leq 0, \ \forall j
       \Longleftrightarrow  \sup_{s \in \partial h(x)} \inner{s}{-e_j} \leq 0, \ \forall j
        \Longleftrightarrow  \forall s \in \partial h(x), \ s \geq 0 \ 
         \Longleftrightarrow \partial h(x) \subseteq \R_+^m\ .
    \end{align*}
    Then, for any $s \in \mathrm{ri}(\dom{h^*})$, there exists $x \in \partial h^*(s)$, and thus $s \in \partial h(x) \subseteq \R^m_+$. 
\end{proof} 

\paragraph{Lagrangian Reformulations.} We can now define a Lagrangian function essential to our algorithm and its analysis. Recalling $f=f^{**}$ for any closed, convex, and proper function~\citep[Corollary 12.2.1]{rockafeller}, one has that
$$h(g(x))=\sup_{\lambda \in \Lambda} \inner{\lambda}{g(x)}-h^*(\lambda), \quad \text{where } \Lambda := \dom{h^*}\ .$$
The \textit{Standard Lagrangian} reformulation follows as
\begin{equation}\label{eq:simple-lagrangian}
    \inf_{x \in \X} h(g(x))+u(x) = \inf_{x \in \X}\sup_{\lambda \in \Lambda} \left\{    \inner{\lambda}{g(x)} - h^*(\lambda)+u(x)\right\} \ .
\end{equation}
Furthermore, since each $\lambda \in \R_+^m$ (see \cref{lemma:lambda nonneg}), one can dualize each component function $g_j$, obtaining the equivalent \textit{Extended Lagrangian} reformulation, which our analysis will utilize
\begin{equation}\label{eq:modified-lagrangian}
    \inf_{x \in \X}  h(g(x))+u(x) = \inf_{x \in \X} \sup_{(\lambda,  \nu) \in \Lambda \times V} \left\{\Lcal(x;\lambda,\nu) := \inner{\lambda}{\nu x - g^*(\nu)}-h^*(\lambda)+u(x)\right\} \ ,
\end{equation}
{\color{blue}where $V := \dom{g^*}$. } {\color{blue} Note that $\Lcal(x; \lambda, \nu)$ is convex in $x$ and block-wise concave in $\lambda$ and $\nu$.}
 
Define $\Z := \X \times \Lambda  \times V$ for primal variables $x \in \X$, dual variables $\lambda \in \Lambda = \dom{h^*} \subseteq \R_+^m$, and conjugate variables $\nu \in V = \dom{g^*} \subseteq \R^{m \times n}$\ . Let $Z^\star$ denote the set of saddle points of~\labelcref{eq:modified-lagrangian}, which we assume throughout is nonempty.  {\color{blue} In the case where $h$ is linear, this assumption is equivalent to the existence of a minimizer. In the case where $h = \iota_{\leq 0}$, the functionally constrained setting, this assumption is equivalent to strong duality holding with primal and dual attainment.} Note any such $z^\star \in Z^\star$ must have $0\in\partial_\lambda \Lcal(x^\star;\lambda^\star,\nu^\star)$ and consequently $\lambda^\star \in \partial h(g(x^\star))$.

As a common generalization of the Euclidean distance, for any convex reference function $g \colon \X \to \R$, we define the associated Bregman divergence as
\begin{equation}\label{eq:Bregman-def}
    U_g(x;\hat{x}):=g(x)-g(\hat{x})-\inner{g'(\hat{x})}{x-\hat{x}} 
\end{equation}
for some $g'(\hat{x}) \in \partial g(\hat{x})$. If $g$ is vector-valued, we extend the definition above by $g=(g_1, ..., g_m)$ and $U_{g}=(U_{g_1}, ..., U_{g_m})$. That is, $U_g$ is vector-valued with each component being the Bregman divergence of the corresponding component of $g$.

\subsection{Key Techniques from Prior Works}\label{subsec:related-work}
Our results rely on four technical tools developed over the last decade that we bring together to handle various facets of the general problem~\eqref{eq:main-composite-problem}.  We introduce these formally below. In short, Lan's sliding technique~\citep{lan2014gradientslidingcompositeoptimization} allows us to decompose the complexity concerning proximal steps on $u$ and $h$ from that of gradient calls to $g$; the Q function analysis of~\citep{zhang2022solving} provides the primal-dual framework from which UFCM is built; the technique to universally analyze \Holder smooth functions from~\citep{Nesterov_2014} allows our results to generalize beyond smooth optimization; restarted methods allow us to generalize our results further to benefit from any uniform convexity present among its components. \\

\noindent {\it Sliding Gradient Methods.} The sliding technique introduced by Lan~\citep{lan2014gradientslidingcompositeoptimization, lan_conditional_gradient} iteratively and approximately solves subproblems associated with accelerated proximal gradient methods. This approach was first developed to handle objectives $g_0+u$, with $g_0$ smooth and $u$ nonsmooth but with readily available subgradients. The sliding gradient method allows the number of first-order oracle calls to $g_0$ and $u$ to be decomposed, often significantly reducing the number of calls needed to $g_0$.
In the context of our considered method, a central step of our method requires a proximal step on a certain minimax optimization subproblem involving $u$ and $h^*$. Sliding performs this step approximately, decomposing computations related to $\nabla g$, $\prox_{u,\tau}$, and $\prox_{h^*,\tau}$. \\

\noindent {\it \texorpdfstring{$Q$}{Lg} Function Framework for Constrained Optimization.}
The novel work of~\citep{zhang2022solving} considered the problem of minimizing $g_0+u$ subject to inequality constraints $g_j(x)\leq 0$, corresponding in our model to minimizing  $F=g_0(x)+\iota_{z \leq 0}(g_1, \dots, g_m) + u(x)$. The key step therein is designing algorithms generating iterates $z^t$ driving an associated ``gap function'' providing a measure of optimality on the extended Lagrangian reformulation to zero\footnote{Here $\pi$ is dual multiplier corresponding to the function $g_0$, always equal to $\pi^t := \nabla g_0(x^t)$. We omit this variable from the formulation considered throughout this work as without loss of generality, we set $g_0=0$.}:
$$Q(z^t, z) := \Lcal(x^t; \lambda, \nu, \pi)-\Lcal(x; \lambda^t, \nu^t, \pi^t) \ .$$
Their proposed accelerated method for smooth constrained optimization works by optimizing the Q function separately with each block of variables. Their updates concerning $\pi$ and $\nu$ amount to computing gradients of $g_0$ and $g_j$. Their updates for $x$ and $\lambda$ correspond to solving a quadratic program, which a sliding technique is able to decompose.

Our theory recovers these results of Zhang and Lan, improving their guarantees in settings with strongly convex constraints, enabling it to apply universally to compositions (not just constrained optimization) and to problems with H\"older smooth and/or uniformly convex components. Section~\ref{subsec:Q-for-compositions} formally develops our generalization of their Q function framework. After developing our convergence theory, Section~\ref{section:application-to-func-constrained} provides a detailed comparison of results.\\

\noindent {\it Universal Methods for Minimization with H\"older Continuous Gradient.}

Nesterov's universal fast gradient method~\citep{Nesterov_2014} provided a generalization of Nesterov's classic fast gradient method~\citep{NAG} capable of minimizing any $(L,p)$-H\"older smooth objective. The key technical insight {\color{blue} originating from~\citep{Convex Opti Inexact Oracle}} that enables this method is a lemma establishing an approximate smoothness result for any such function, meaning the standard quadratic upper bound derived for functions with Lipschitz gradient holds for functions with H\"older gradient up to an additive constant. A variant of this lemma showcasing a standard cocoercivity inequality also generalizes at the cost of an additive constant, derived by~\citep{li2024simpleuniformlyoptimalmethod}.

\begin{lemma}[Lemma 1,~\citet{Nesterov_2014}]\label{lemma:Nesterov-universal-quad}
    For any tolerance $\delta>0$ and $(L, p)$-\Holder smooth function $f\colon \X \to \R$ with  $L_\delta \geq \left[\frac{1-p}{1+p}\frac{1}{\delta}\right]^{\frac{1-p}{1+p}}L^{\frac{2}{1+p}}$, 
     \begin{equation}\label{eq:holder-quad-upper}
        f(y) \leq f(x)+ \inner{\nabla f(x)}{y-x} +\frac{L_\delta}{2}\|y-x\|^2+\frac{\delta}{2}, \ \forall x,y \in \dom{f}\ .
    \end{equation}
\end{lemma}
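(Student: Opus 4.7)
The plan is to reduce the claim to a scalar inequality and verify it by Young's inequality. Starting from the $(L,p)$-H\"older smoothness of $f$, integrating $\nabla f$ along the segment from $x$ to $y$ (which the paper already records as~\eqref{eq:intro-Holder-consequence}) gives
$$f(y) \leq f(x) + \inner{\nabla f(x)}{y-x} + \frac{L}{p+1}\|y-x\|^{p+1}.$$
Setting $t := \|y-x\|$, it therefore suffices to prove the one-dimensional bound
$$\frac{L}{p+1}\,t^{p+1} \ \leq\ \frac{L_\delta}{2}\,t^2 + \frac{\delta}{2} \qquad \forall\, t \geq 0,$$
under the stated lower bound on $L_\delta$.

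For the scalar inequality, I would apply Young's inequality with conjugate exponents $r = 2/(p+1)$ and $s = 2/(1-p)$ (well-defined for $p \in [0,1)$; when $p=1$ the statement degenerates to the classical descent lemma with $L_\delta = L$, which can be handled separately). Writing $t^{p+1} = (\alpha\,t^{p+1}) \cdot \alpha^{-1}$ for a free parameter $\alpha > 0$ and applying Young gives
$$t^{p+1} \ \leq\ \frac{\alpha^{r}}{r}\,t^2 \ +\ \frac{\alpha^{-s}}{s}.$$
Multiplying through by $L/(p+1)$ and using the identity $(p+1)\,r = 2$ turns the right-hand side into $\tfrac{L\alpha^{r}}{2}\,t^2 + \tfrac{L(1-p)}{2(p+1)}\,\alpha^{-s}$.

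The last step is to tune $\alpha$. Choosing $\alpha^{r} = L_\delta/L$ makes the quadratic coefficient exactly $L_\delta/2$, so $\alpha^{-s} = (L/L_\delta)^{s/r} = (L/L_\delta)^{(p+1)/(1-p)}$, and the constant term equals $\tfrac{L(1-p)}{2(p+1)}(L/L_\delta)^{(p+1)/(1-p)}$. Requiring this to be at most $\delta/2$ and simplifying via the exponent identity $1 + (p+1)/(1-p) = 2/(1-p)$ yields exactly the hypothesis $L_\delta \geq \bigl[\tfrac{1-p}{1+p}\tfrac{1}{\delta}\bigr]^{(1-p)/(1+p)} L^{2/(1+p)}$, closing the argument. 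An equivalent route is to minimize $\phi(t) := \tfrac{L_\delta}{2}t^2 + \tfrac{\delta}{2} - \tfrac{L}{p+1}t^{p+1}$ over $t \geq 0$ (critical point at $t^{1-p} = L/L_\delta$) and show the minimum is nonnegative; this leads to the same sharp threshold on $L_\delta$.

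There is no real obstacle here — the proof is scalar calculus — so the only thing to be careful about is the exponent bookkeeping, matching the fractional powers in the hypothesis to what Young's inequality produces. It is worth separately noting the boundary cases: $p=1$ recovers the standard descent lemma (with $\delta$ disappearing), and $p=0$ gives the bound for Lipschitz-valued functions, where $L_\delta \geq L^2/\delta$ suffices.
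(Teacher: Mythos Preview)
Your argument is correct. The paper does not supply its own proof of this lemma; it is quoted verbatim from Nesterov~\cite{Nesterov_2014} and used as a black box. Your reduction to the scalar inequality $\tfrac{L}{p+1}t^{p+1}\le \tfrac{L_\delta}{2}t^2+\tfrac{\delta}{2}$ and resolution via Young's inequality (or, equivalently, direct minimization of the difference in $t$) is exactly the standard proof and matches Nesterov's original argument; the exponent bookkeeping you carry out is accurate and yields the stated threshold on $L_\delta$.
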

\begin{lemma}[Lemma 1,~\citet{li2024simpleuniformlyoptimalmethod}]\label{lemma:cocoercive-Holder}
   In the same setting as \cref{lemma:Nesterov-universal-quad},  \begin{equation}\label{eq:holder-cocoercive-bound}
        f(y) \geq f(x) + \inner{\nabla f(x)}{y-x} + \frac{1}{2L_\delta}\|\nabla f(x)-\nabla f(y)\|^2 - \frac{\delta}{2}, \  \forall x,y \in \dom{f} \ .
    \end{equation}
\end{lemma}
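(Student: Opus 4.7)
The plan is to mimic the classical proof of cocoercivity for Lipschitz-smooth convex functions, using the approximate quadratic upper bound of \Cref{lemma:Nesterov-universal-quad} in place of the exact one, and then to track how the additive $\delta/2$ slack propagates.

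First, I would fix $x \in \dom{f}$ and introduce the auxiliary function
$$\phi(z) := f(z) - \langle \nabla f(x), z\rangle \ ,$$
observing that $\phi$ inherits convexity and $(L,p)$-\Holder smoothness from $f$, since $\nabla \phi(z) = \nabla f(z) - \nabla f(x)$. In particular, $\nabla \phi(x) = 0$, so $x$ is a global minimizer of $\phi$. Next, I would apply \Cref{lemma:Nesterov-universal-quad} to $\phi$ at the center $y$ evaluated at the shifted point $z_y := y - \tfrac{1}{L_\delta} \nabla \phi(y)$, giving
$$\phi(z_y) \leq \phi(y) + \langle \nabla \phi(y), z_y - y\rangle + \frac{L_\delta}{2}\|z_y - y\|^2 + \frac{\delta}{2} = \phi(y) - \frac{1}{2L_\delta}\|\nabla \phi(y)\|^2 + \frac{\delta}{2} \ .$$

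Then, using that $x$ minimizes $\phi$, I would chain $\phi(x) \leq \phi(z_y)$ with the above to obtain
$$\phi(x) \leq \phi(y) - \frac{1}{2L_\delta}\|\nabla f(y) - \nabla f(x)\|^2 + \frac{\delta}{2} \ .$$
Unpacking the definition of $\phi$, the linear terms collapse to $\langle \nabla f(x), y - x\rangle$, yielding exactly the claimed inequality~\eqref{eq:holder-cocoercive-bound} after rearrangement.

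The only real obstacle is a domain subtlety: the shifted point $z_y$ must lie in $\dom f$ for \Cref{lemma:Nesterov-universal-quad} to apply. This is typically handled by assuming $f$ is defined on the full ambient space (which is the setting Li and Lan work in), or by an approximation argument if $\mathcal{X}$ is a proper subset; I would invoke the former convention here, consistent with how \Cref{lemma:Nesterov-universal-quad} is subsequently applied in the paper to the components $g_j$. Everything else is essentially a mechanical translation of the standard proof of cocoercivity, with the $\delta/2$ slack appearing as a direct consequence of the one invocation of the approximate quadratic upper bound.
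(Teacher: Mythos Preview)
The paper does not give its own proof of this lemma; it simply cites it as Lemma~1 of Li and Lan~\cite{li2024simpleuniformlyoptimalmethod}. Your argument is correct and is precisely the standard derivation of cocoercivity from the descent inequality, with the $\delta/2$ slack carried through from the single application of \Cref{lemma:Nesterov-universal-quad}; this is also the approach taken in the cited source.
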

These lemmas facilitate our generalization in Section~\ref{section:Holder-Compositions} from smooth components to heterogeneously H\"older smooth components. Our Approximate Dualized Aggregate smoothness constant $\Lada$ is a further generalization of the approximate smoothness constants $L_\delta$ seen above. Namely, $\Lada$ further aggregates the \Holder smoothness of each component $g_j$, weighted approximately by the corresponding optimal dual multiplier $\lambda_j^\star$.\\

\noindent {\it Restarting Gradient Methods.} \label{subsubsec:restarting}
Algorithmic restarting, dating back to at least~\citep {Nemirovski_restarting}, can be shown to accelerate the convergence rate of first-order methods. The more recent works~\citep{Yang2018,Roulet2020,grimmerrestarting} established improved convergence guarantees given strong or uniform convexity or any general H\"olderian growth. The analysis of such schemes tends to rely on ensuring a reduction, often a contraction, in the distance to optimal occurs at each restart. Such schemes have found particular success in primal-dual algorithm design for linear and quadratic programming~\citep{lu_LP_sharp, lu2024practicaloptimalfirstordermethod}.

In our analysis, two distances to optimal are traced based on the distance from primal iterates $x^t$ to $x^\star$ and the distance from the dual iterates $\lambda^t$ to $\lambda^\star$. Given any uniform convexity among the components $g_j$, our Approximate Dualized Aggregate convexity $\muada$ describes the improvement in convergence gained from restarting the primal iterate sequence. Given any smoothness $L_h$ in the composing function $h$, improved convergence follows from restarting the dual iterate sequence. The relative sizes of $\muada$ and $L_h$ determine our various rates previously claimed in Table~\ref{tab:composite-heterogeneous-rates}.

\section{Minimization of Compositions with Smooth Components}\label{section:Smooth-Composite}

For ease of exposition, in this section, we first develop our main algorithm UFCM, assuming each component function $g_j$ is $L_j$-smooth and convex. The following two sections provide extensions to benefit from any H\"older smoothness and uniform convexity present in each $g_j$ and any smoothness present in $h$. Section~\ref{subsec:Q-for-compositions} formalizes the Q analysis framework for our composite optimization context, and Section~\ref{subsec:Lada} introduces our unifying Approximate Dualized Aggregate smoothness parameter $\Lada$. Then, Section~\ref{subsec:UFCM} presents our first convergence guarantee, only requiring the Approximate Dualized Aggregate smoothness $\Lada$ (or any upper bound) as input. Finally, Section~\ref{subsec:smooth-analysis} provides the key steps in our analysis, deferring any reasoning {\color{blue} directly} generalizing the constrained optimization analysis of~\citep{zhang2022solving} to the appendix.

\subsection{\texorpdfstring{$Q$}{Lg} Function Framework for Composite Optimization} \label{subsec:Q-for-compositions}
We can now introduce our generalization of the Q analysis framework of~\citep{zhang2022solving} that drives this paper. Based on the extended Lagrangian~\labelcref{eq:modified-lagrangian}, we define an analogous gap function.
\begin{definition}
    Given functions $g,h,u$ defining an instance of~\eqref{eq:WLOG-main-problem}, the gap function is defined as
    \begin{equation}\label{eq:gap-function}
        \hQ(z, \hat{z}) := \hLcal(x ; \hat{\lambda}, \hat{\nu})-\Lcal(\hat{x};\lambda, \nu) \ .
    \end{equation}
\end{definition}
\noindent Fixing $h(\cdot)$ as the indicator for the nonpositive orthant recovers their definition.

For the sake of our analysis, we fix an arbitrary saddle point $z^\star:=(x^\star; \lambda^\star,\nu^\star)$ with $\nu^\star := \nabla g(x^\star)$. Note $\hLcal(x^\star; \lambda, \nu) \leq \hLcal(x^\star ; \lambda^\star,  \nu^\star) \leq \hLcal(x; \lambda^\star, \nu^\star)$. Hence, $Q(z,z^\star) \geq 0$ for all  $z \in \Z$, making convergence of $Q(z^t,z^\star)$ a potential measure of solution quality. Our analysis considers a slight modification, allowing perturbations of $\lambda^\star$ and $\nu^\star$, giving a condition that implies $z^t$ is an $(\eps,r)$-optimal solution~\labelcref{def:optimality-conditions} for our original composite problem.
To this end, we restrict to considering $\lambda$ within a fixed distance $r$ of $\lambda^\star$ and in the dual domain $\Lambda = \dom{h^*}$, denoted
\begin{equation}\label{eq:reference-set}
    \Lambda_r :=  B(\lambda^\star,r) \cap \Lambda \ ,
\end{equation}
{\color{blue} where $B(\lambda^\star, r)$ is the closed ball of radius $r$ centered at $\lambda^\star$.}

Given a candidate primal solution $x^t$, for analysis sake only, we define the following perturbed component function value
\begin{align}\label{def:composite-linearization-center-g-tilde}
\begin{split}
    &\ \hat{g} \in \argmin_{w \in \dom{h}} h(w)+\inner{-\lambda^\star}{w}+r\|w-g(x^t)\| ,
\end{split}
\end{align} 
which exists as the objective has compact level sets. {\color{blue} In particular,   $h(w) - \inner{\lambda^\star}{w} \geq - h^*(\lambda^\star)$, by the Fenchel-Young inequality. It then holds that for any $z \in \R$, $$\{w \in \R^m : h(w) + \inner{-\lambda^\star}{w}+r\|w-g(x^t)\| \leq z\} \subseteq \{w \in \R^m : -h^*(\lambda^\star)+r\|w-g(x^t)\| \leq z\} $$
        where the larger set is bounded.} From this, for analysis sake only, we define the following associated perturbed dual variables as
$$ {\color{blue} \hat{\lambda} := \begin{cases}\lambda^\star + r\frac{g(x^t)-\hat{g}}{\|g(x^t)-\hat{g}\|} & \hat{g} \ne g(x^t)\\
\lambda^\star + r\zeta & \text{otherwise.}\end{cases}}, $$
{\color{blue} where $\zeta \in B(0,1)$ is an appropriate perturbation such that $\lambda^\star + r\zeta \in \partial h(\hat g)$, which is guaranteed by first-order optimality conditions.} 
Note that $\hat{\lambda} \in \partial h(\hat{g})$, implying $\hat{\lambda} \in \Lambda$ (since $\hat{g} \in \partial h^*({\hat{\lambda}})$), so $\hat{\lambda} \in \Lambda_r$.
The following lemma relates $(\eps,r)$-optimality to the evaluation of $Q$ at $z^t$ with respect to $(x^\star,\hat \lambda,\nabla g(x^t))$.

\begin{lemma}\label{lemma:Q-small->eps-opt} For any $z^t = (x^t ; \lambda^t,  \nu^t) \in \Z$ and $\eps > 0$, if $Q(z^t,(x^\star, \hat{\lambda}, \nabla g(x^t))) \leq \eps ,$ then $x^t$ is $(\eps,r)$-optimal~\eqref{def:optimality-conditions}.

\end{lemma}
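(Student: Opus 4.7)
The plan is to expand $Q(z^t, (x^\star, \hat\lambda, \nabla g(x^t)))$ via the extended Lagrangian definition and extract both $(\eps, r)$-optimality conditions by sandwiching this quantity between a suitable upper and lower bound.

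First, I would simplify the ``$x^t$-side'' term. Since $\hat\nu = \nabla g(x^t)$ realizes Fenchel--Young equality for each convex $g_j$, $\nabla g_j(x^t) \cdot x^t - g_j^*(\nabla g_j(x^t)) = g_j(x^t)$, so $\Lcal(x^t; \hat\lambda, \nabla g(x^t)) = \inner{\hat\lambda}{g(x^t)} - h^*(\hat\lambda) + u(x^t)$. Applying Fenchel--Young equality to $\hat\lambda \in \partial h(\hat g)$ rewrites this as
\begin{equation*}
\Lcal(x^t; \hat\lambda, \nabla g(x^t)) = h(\hat g) + \inner{\hat\lambda}{g(x^t) - \hat g} + u(x^t),
\end{equation*}
which (using $g_0 \equiv 0$) is precisely the left-hand side of the first $(\eps, r)$-optimality condition in~\eqref{def:optimality-conditions}. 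Next I would upper bound the ``$x^\star$-side'' term by $p_\star$ via two Fenchel--Young inequalities: since $\lambda^t \in \Lambda \subseteq \R_+^m$ by \cref{lemma:lambda nonneg}, $\inner{\lambda^t}{\nu^t x^\star - g^*(\nu^t)} \leq \inner{\lambda^t}{g(x^\star)}$, and then $\inner{\lambda^t}{g(x^\star)} - h^*(\lambda^t) \leq h(g(x^\star))$, so $\Lcal(x^\star; \lambda^t, \nu^t) \leq h(g(x^\star)) + u(x^\star) = p_\star$. Together with the hypothesis $Q(z^t,\cdot) \leq \eps$, this already yields the first $(\eps, r)$-optimality condition.

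For the radial condition $r\|g(x^t) - \hat g\| \leq \eps$, I would prove a matching lower bound on the first term. Substituting $\hat\lambda = \lambda^\star + r(g(x^t) - \hat g)/\|g(x^t) - \hat g\|$ splits that term as
\begin{equation*}
\bigl[h(\hat g) - \inner{\lambda^\star}{\hat g}\bigr] + \bigl[\inner{\lambda^\star}{g(x^t)} + u(x^t)\bigr] + r\|g(x^t) - \hat g\|.
\end{equation*}
The first bracket is $\geq -h^*(\lambda^\star)$ by Fenchel--Young. The second is a convex function of $x$ whose subdifferential at $x^\star$ contains zero by the saddle conditions $\lambda^\star \in \partial h(g(x^\star))$ and $0 \in \nabla g(x^\star)^\top \lambda^\star + \partial u(x^\star)$, so this bracket is $\geq \inner{\lambda^\star}{g(x^\star)} + u(x^\star)$. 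Since Fenchel--Young equality at the saddle gives $-h^*(\lambda^\star) + \inner{\lambda^\star}{g(x^\star)} = h(g(x^\star))$, the total lower bound reads $p_\star + r\|g(x^t) - \hat g\|$. Combined with $\Lcal(x^\star; \lambda^t, \nu^t) \leq p_\star$ and $Q \leq \eps$, this yields $r\|g(x^t) - \hat g\| \leq \eps$.

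The main obstacle is the lower bound in the preceding paragraph, which must simultaneously invoke Fenchel--Young for $h$ in an inequality direction, the convexity-based minimization at $x^\star$ via the KKT conditions, and the bookkeeping that cleanly recovers $p_\star$ through the $\hat\lambda$ splitting. The remaining steps are mechanical unpacking of definitions and Fenchel--Young equality, with the construction of $\hat g$ from~\eqref{def:composite-linearization-center-g-tilde} ensuring $\hat\lambda \in \partial h(\hat g)$ so both equalities fire.
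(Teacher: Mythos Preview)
Your proposal is correct and takes essentially the same approach as the paper: both unpack $\Lcal(x^t;\hat\lambda,\nabla g(x^t))$ via Fenchel--Young equalities and bound $\Lcal(x^\star;\lambda^t,\nu^t)\le p_\star$, and for the radial condition both combine the saddle-point inequality of the standard Lagrangian with Fenchel--Young for $h$ at $\lambda^\star$. The only cosmetic difference is that the paper packages the primal and dual saddle inequalities into the single line~\eqref{eq:saddle-consequence} whereas you invoke them separately; your stated KKT condition $0\in\nabla g(x^\star)^\top\lambda^\star+\partial u(x^\star)$ omits the normal cone $N_\X(x^\star)$, but the minimization of $\langle\lambda^\star,g(\cdot)\rangle+u(\cdot)$ over $\X$ at $x^\star$ that you actually need follows directly from the saddle-point property, so the argument stands.
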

\begin{proof}
   Let $\hat{\nu}=\nabla g(x^t)$.
   Since $\hat \lambda \in \partial h(\hat{g}) \cap \Lambda_r$, the first condition for the $(\eps,r)$-optimality of $x^t$ holds as
{\color{black}
    \begin{align*} \left[ h(\hat{g})+\inner{\hat\lambda}{g(x^t)-\hat{g}}+u(x^t)\right]-F(x^\star) &\leq \left[  h(\hat{g})+\inner{\hat\lambda}{g(x^t)-\hat{g}}+u(x^t)\right] -\hLcal(x^\star; \lambda^t, \nu^t)\\
    &= \left[   \inner{\hat\lambda}{g(x^t)}-h^*(\hat{\lambda}) +u(x^t)\right] -\hLcal(x^\star; \lambda^t, \nu^t)\\
    &= \left[   \inner{\hat\lambda}{\hat{\nu}x^t - g^*(\hat{\nu})}-h^*(\hat{\lambda}) +u(x^t)\right]-\hLcal(x^\star; \lambda^t, \nu^t)\\
    &= \hQ(z^t, (x^\star; \hat\lambda,  \hat{\nu})) \leq \eps\ , 
    \end{align*}
    where the first inequality simply bounds $F(x^\star)$ below by $\Lcal(x^\star; \lambda^t,\nu^t)$ and the following two equalities apply the Fenchel-Young inequality, holding with equality since $\hat{\lambda}\in\partial h(\hat{g})$ and $\hat{\nu} = \nabla g(x^t)$.
}
   
   To show the second condition for $(\eps,r)$-optimality holds, {\color{blue} if $\hat{g}=g(x^t)$ then this result is trivial. Otherwise,} we note that since $(x^\star,\lambda^\star)$ is a saddle point to \labelcref{eq:simple-lagrangian}, \begin{equation}\label{eq:saddle-consequence}
     0 \leq \inner{\lambda^\star}{g(x^t)}-h^*(\lambda^\star)+u(x^t) - \left[\inner{\lambda^t}{g(x^\star)}-h^*(\lambda^t)+u(x^\star)\right] \ .
 \end{equation} Consequently, 
\begin{align*}
    r\|g(x^t)-\hat{g}\|  &\leq \inner{\frac{r(g(x^t)-\hat{g})}{\|g(x^t)-\hat{g}\|}}{g(x^t)-\hat{g}}+\inner{\lambda^\star}{g(x^t)}-h^*(\lambda^\star)+u(x^t)\\&\hspace{1cm}- \left[\inner{\lambda^t}{g(x^\star)}-h^*(\lambda^t)+u(x^\star)\right]\\
    &= \inner{\hat{\lambda}}{g(x^t)}-\inner{\frac{r(g(x^t)-\hat{g})}{\|g(x^t)-\hat{g}\|}}{\hat{g}}-\inner{\lambda^\star}{\hat{g}}+\inner{\lambda^\star}{\hat{g}}-h^*(\lambda^\star)+u(x^t)\\ & \hspace{1cm}-\left[\inner{\lambda^t}{g(x^\star)}-h^*(\lambda^t)+u(x^\star)\right]\\
    &\leq \inner{\hat{\lambda}}{g(x^t)} -\inner{\hat{\lambda}}{\hat{g}}+h(\hat{g}) +u(x^t)-\left[\inner{\lambda^t}{g(x^\star)}-h^*(\lambda^t)+u(x^\star)\right]\\
      &\leq\hQ(z^t,(x^\star,\hat{\lambda},\hat{\nu})) \leq \eps  
\end{align*}
where the first inequality follows from \labelcref{eq:saddle-consequence}, and the second and third apply Fenchel-Young.  
\end{proof}

\subsection{An Approximate Dualized Aggregate Smoothness Constant} \label{subsec:Lada}

If one knew the optimal dual multipliers $\lambda^\star$, the convex composite optimization problem~\eqref{eq:WLOG-main-problem} could be rewritten as the simpler minimization problem of
\begin{equation}\label{eq:dualized-ideal-problem}
    \min_{x\in \X} \sum_{j=1}^m \lambda_j^\star g_j(x) + u(x) \ ,
\end{equation}
which can be addressed by accelerated (regularized) smooth optimization methods like FISTA~\citep{FISTA}. In this simplified problem, $\sum \lambda_j^\star g_j(x)$ is $\sum \lambda_j^\star L_j$-smooth, aggregating the individual smoothness constants weighted by the optimal dual multiplier. Without knowing $\lambda^\star$, we aim to approximate this aggregate dualized constant. Our theory instead depends on the slightly larger constant given by considering all $\lambda$ in the neighborhood of $\lambda^\star$ given by $\Lambda_r$. Given each $g_j$ is $L_j$-smooth, we denote this ``Approximate Dualized Aggregate'' smoothness constant by
\begin{equation} \label{eq:Lada-definition}
    \Lada :=  \sum_{j=1}^m (\lambda_j^\star+r) L_j \ .
\end{equation}
As $r$ tends to zero, $\Lada$ converges to the idealized value $\sum \lambda_j^\star L_j$. 
Note this only depends on the target accuracy $r>0$, not $\epsilon>0$. We include this dependence in our notation as the appropriate generalization to H\"older smooth settings given in equation \labelcref{eq:Lada-definition-general} will depend on both. Further generality will be introduced when the components possess uniform convexity in equation \labelcref{eq:Lada-definition-fully-general}.
The special case of constrained optimization minimizing $g_0(x)+u(x)$ subject to $g_j(x)\leq 0$ provides a particularly nice application to understand $\Lada$. There, $h(z_0,\dots,z_m) = z_0 + \iota_{z\leq 0}(z_1,\dots, z_m)$, so $\lambda_0^\star=1$ while $\lambda_1^\star, \dots, \lambda_m^\star$ are the optimal dual multipliers for each constraint. With $\Lada = L_0 + \sum_{j=1}^m (\lambda^\star_j+r) L_j$, the smoothness of the objective always plays a role while only the smoothness of  active constraints at the minimizer can nontrivially affect the convergence rate (that is, complementary slackness ensures that $\lambda_j^\star=0$ for each inactive constraint).

 \subsection{The Universal Fast Composite Gradient Method} \label{subsec:UFCM}
 UFCM works primarily by splitting and optimizing $\hQ(z^t, z)$ on its primal, dual, and conjugate variables separately. This process formalized below is directly analogous to the algorithm design of the ACGD-S method for smooth constrained optimization of~\cite{zhang2022solving}, {\color{blue} extended to allow a general proximal step on $h^*$ and the usage of our new $\Lada$ constant. With these established, parameter choices only need slight modifications. Hence, UFCM generalizes ACGD-S to heterogeneous and composite settings.}  We define these three components such that $\hQ(z^t, z)=\hQ_\nu(z^t, z)+\hQ_x(z^t, z)+\hQ_\lambda(z^t, z)$ as 
\begin{align*}
    \hQ_\nu(z^t, z)&=\hLcal(x^t; \lambda,  \nu)-\hLcal(x^t; \lambda,  \nu^t)=\inner{\lambda}{\nu x^t-g^*(\nu)}\boxed{-\inner{\lambda}{\nu^tx^t-g^*(\nu^t)}} \ , \\
   \hQ_x(z^t, z)&=\hLcal(x^t; \lambda^t,  \nu^t)-\hLcal(x; \lambda^t, \nu^t)=\boxed{\inner{\sum_{j=1}^m \lambda_j^t\nu_j^t}{x^t}+u(x^t)}-\inner{\sum_{j=1}^m \lambda_j^t\nu_j^t}{x}-u(x) \ , \\
   \hQ_\lambda(z^t, z)&=\hLcal(x^t; \lambda,  \nu^t)-\hLcal(x^t; \lambda^t,  \nu^t)=\inner{\lambda}{\nu^tx^t-g^*(\nu^t)}-h^*(\lambda)\boxed{-\left[\inner{\lambda^t}{\nu^tx^t-g^*(\nu^t)}-h^*(\lambda^t)\right]} \ .
\end{align*}
Each boxed term above corresponds to the component depending on the next iterate $\nu^t,x^t,\lambda^t$.
We aim to minimize each subproblem with respect to $z^t$; thus, we minimize each boxed value. Informally, UFCM proceeds by first computing a momentum step in $x$, denoted by $\tilde x^t= x^{t-1}+\theta_t(x^{t-1}-x^{t-2})$, and then computing (potentially many) proximal operator-type steps in each of $\nu, x, \lambda$ corresponding to
    {\color{blue} \begin{align*}
         \nu_j^t &\leftarrow \displaystyle\argmax_{\nu_j \in V_j} \inner{\nu_j}{\tilde{x}^t}-g_j^*(\nu_j)-\tau_tU_{g_j^*}(\nu_j;\nu_j^{t-1}), \\
            (x^t, \lambda^t) &\leftarrow \displaystyle\argmin_{x \in \X}\max_{\lambda \in \Lambda} \inner{\lambda}{\nu^tx-g^*(\nu^t)}+u(x) -h^*(\lambda) + \tfrac{\eta_t}{2}\|x-x^{t-1}\|^2
    \end{align*}}

{\color{blue} In the above, $\theta_t$ parametrizes the momentum step and the nonnegative parameters $\tau_t$ and $\eta_t$ are stepsizes for the proximal steps. Recall $U_{g_j^*}$ is the Bregman divergence generated by $g_j^*$. Note that solving $\nu^t$ utilizes a Bregman divergence instead of the standard Euclidean distance, as it can be shown recursively that this is identical to a gradient evaluation of $g_j$ at a particular averaged point~\citep[Lemma 2]{zhang2022optimal}.

Solving the second subproblem is not as simple. We utilize the sliding technique to take alternating proximal steps with respect to $x$ and $\lambda$, without addition to the gradient oracle complexity. We further employ two more nonnegative parameters, $\beta^{(t)}$ and $\gamma^{(t)}$, respectively handling the proximal steps on $u$ and $h^*$ for the inner loop iterates.} 

Formally, UFCM defined in Algorithm~\ref{alg:FCM} proceeds by iteratively applying a momentum update {\color{blue} and the above update to $\nu$} in the outer loop. The inner loop using the sliding technique to {\color{blue} apply several proximal steps to compute the above update to $(x, \lambda)$} without requiring any additional first-order evaluations of $g$. As computational notes, Line 11 saves previous iterates $y_{0}^{(t+1)}$, $\lambda_0^{(t+1)},$ and $\lambda_{-1}^{(t+1)}$ for use in the next inner loop. The subtle change from $\nu^t$ to $\nu^{t-1}$ in the two cases defined in Line 7 is common for sequential dual type algorithms using the sliding technique~\citep{zhang2022solving, lan2023optimalmethodsconvexrisk, zhang2020efficientalgorithmsdistributionallyrobust}.

\begin{algorithm}[t]
\textbf{Input}  $z^0 \in \X \times \Lambda$, outer loop iteration count $T$, and smoothness constant $\Lada$\\
 \textbf{Initialize} $x^{-1}=\underline{x}^0=y_0^{(1)}=x^0 \in \X$, $\lambda_{-1}^{(1)}=\lambda_0^{(1)}=\lambda^0 \in \Lambda$, and parameters $\{\theta_t\}, \{\eta_t\}, \{\tau_t\}$, $\{\omega_t\}$ as a function of $\Lada$
\begin{algorithmic}[1] 
\State Set $\nu^0=\nabla g(x^0)$.
\For{$t = 1,\ 2,\ 3,\ ...,\ T$}
\State Set $\underline{x}^t \gets (\tau_t \underline{x}^{t-1}+\Tilde{x}^t)/(1+\tau_t)$ where $\Tilde{x}^t = x^{t-1}+\theta_t(x^{t-1}-x^{t-2})$
\State Set $\nu^t \gets \nabla g(\underline{x}^t)$
\State Calculate inner loop iteration limit $S_t$, parameters $\beta^{(t)}$, $\gamma^{(t)}$, and $\rho^{(t)}$
\For{$s = 1, 2, ..., S_t$}
\State Set $\Tilde{h}^{(t),s}=\begin{cases}
    (\nu^t)^\top\lambda_0^{(t)}+\rho^{(t)}(\nu^{t-1})^\top(\lambda_0^{(t)}-\lambda_{-1}^{(t)}) & \text{if } s=1,\\
       (\nu^t)^\top\lambda_{s-1}^{(t)}+(\nu^{t})^\top(\lambda_{s-1}^{(t)}-\lambda_{s-2}^{(t)}) & \text{otherwise}\\
\end{cases}$
\State Solve $y_s^{(t)} \gets \displaystyle\argmin_{y \in \X}  \inner{\Tilde{h}^{(t),s}}{y}+u(y)+\frac{\eta_t}{2}\|y-x^{t-1}\|^2+\frac{\beta^{(t)}}{2}\|y-y_{s-1}^{(t)}\|^2$
\State Solve $\lambda_s^{(t)} \gets \displaystyle\argmax_{\lambda \in \Lambda}  \inner{\lambda}{\nu^t(y_s^{(t)}-\underline{x}^t)+g(\underline{x}^t)}-h^*(\lambda)-\frac{\gamma^{(t)}}{2}\|\lambda-\lambda_{s-1}^{(t)}\|^2$
\EndFor
\State Set $\lambda_0^{(t+1)} = \lambda_{S_t}^{(t)}$, $\lambda_{-1}^{(t+1)}=\lambda_{S_t-1}^{(t)}$, $y_0^{(t+1)}=y_{S_t}^{(t)}$
\State Set $x^t = \sum_{s=1}^{S_t} y_s^{(t)}/S_t$ and $\Tilde{\lambda}^t = \sum_{s=1}^{S_t} \lambda_s^{(t)}/S_t$
\EndFor\\
\Return $(\bar{x}^T,\bar{\lambda}^T) := \sum_{t=1}^T\omega_t \left( x^t,\Tilde{\lambda}^t\right)/\left(\sum_{t=1}^T \omega_t \right)$
\end{algorithmic}
\caption{Universal Fast Composite Method (UFCM)}\label{alg:FCM}
\end{algorithm}

\subsection{Guarantees for Composite Optimization with Smooth Components}\label{subsec:smooth-analysis}
 We begin this section by introducing the two oracle complexities we bound with respect to finding an $(\eps,r)$-optimal solution. We denote the gradient complexity of UFCM by $\Neps$ if for any $T \geq \Neps$, $\bar x^T$ is guaranteed to be an $(\eps,r)$-optimal point. Likewise, we denote the proximal complexity of UFCM by $\Peps$ if at most $\lceil \Peps\rceil$ proximal evaluations of $u$ and $h$ are guaranteed to be performed in the first $\lceil \Neps\rceil$ outer loop iterations of UFCM.

To ensure that UFCM converges to an $(\epsilon,r)$-optimal solution, we place several requirements on the selection of its parameters. For each outer loop $t\geq 1$, we require that
\begin{align}
     & \omega_t\eta_t \leq \omega_{t-1}\eta_{t-1}\label{cond:a1}\\
     & \omega_t\tau_t \leq \omega_{t-1}(\tau_{t-1}+1)\label{cond:a2}\\
     & \eta_{t-1}\tau_t \geq \theta_t L_{\eps,r}^{\mathtt{ADA}} \ \text{ with } \ \theta_t = \omega_{t-1}/\omega_{t}\label{cond:a3}\\
     & \eta_T(\tau_T+1) \geq L_{\eps,r}^{\mathtt{ADA}} \label{cond:a4}\\
    &\gamma^{(t)}\beta^{(t)} \geq \|\nu^t\|^2\label{cond:b3}\\
    & \Tilde{\omega}^{(t)}\beta^{(t)} \geq   \Tilde{\omega}^{(t+1)}\beta^{(t+1)}\label{cond:c1}\\
    & \Tilde{\omega}^{(t)}\gamma^{(t)} \geq   \Tilde{\omega}^{(t+1)}\gamma^{(t+1)}\label{cond:c2}\\
    &{\color{blue} \gamma^{(t)}\beta^{(t)} \geq (\rho^{(t)})^2\|\nu^{t-1}\|^2} \ \text{ with } \ \rho^{(t+1)}={\Tilde{\omega}^{(t)}}/{\Tilde{\omega}^{(t+1)}}\label{cond:c3}
\end{align}
\vspace{0.4cm}
where $\Tilde{\omega}^{(t)} := \omega_t/S_t$ denotes the aggregate weights.

Although our algorithm converges for any selection satisfying these requirements, optimized performance follows from particular choices. In particular, our main convergence guarantee below requires knowledge of an upper bound on $\Lada$ to set parameters. Some of our convergence guarantee corollaries additionally assume knowledge of positive bounds on the initial distances to a saddle point $D_x \geq \|x^0-x^\star\|$ and $D_\lambda \geq \|\lambda^0-\lambda^\star\|$.

As a first result, we establish that a careful setting of stepsizes ensures that the primal iterates are always bounded and that the dual iterates are bounded if $h$ is $L_h$-smooth\footnote{We will abuse notation in the setting of general, nonsmooth $h$, saying $h$ is $L_h=\infty$-smooth in this limiting case.}. The parameters of \Cref{alg:FCM} below are further parameterized by the choice of two balancing parameters $C$ and $\Delta$.
\begin{proposition}\label{lemma:h-smooth-lambda-iterate-bound}
    Consider any problem of the form~\labelcref{eq:WLOG-main-problem} and constants $\Delta, C, \epsilon, r > 0$, and suppose \Cref{alg:FCM} is run with outer loop stepsizes set as
    \begin{equation}\label{eq:outerloop-stepsizes}
        \tau_t = \frac{t-1}{2}, \hspace{0.2cm} \eta_t = \frac{L_{\eps,r}^{\mathtt{ADA}}}{\tau_{t+1}}, \hspace{0.2cm} \theta_t = \frac{\tau_t}{\tau_{t-1}+1}, \hspace{0.2cm}  \omega_t = t \ ,
    \end{equation}
    and inner loop stepsizes set as  
    \begin{equation}\label{eq:innerloop-stepsizes}
        \rho^{(t)}=
   {\Tilde{M}_t}/{\Tilde{M}_{t-1}}, \hspace{0.3cm} \beta^{(t)} = C\Tilde{M}_t, \hspace{0.3cm} \gamma^{(t)} = \frac{\tilde{M}_t^2}{\beta^{(t)}}=\frac{\Tilde{M}_t}{C} \ ,
    \end{equation}
    with $M_t = \|\nu^t\|$, $S_t=\lceil M_t \Delta t \rceil, \hspace{0.1cm} \tilde{M}_t = \frac{S_t}{\Delta t}$. Then 
    \begin{equation}\label{eq:primal-iterate-bound}
        \|x^t-x^\star\|^2 \leq \frac{1}{2L_{\eps,r}^{\mathtt{ADA}}}\left[(C/\Delta+2L_{\eps,r}^{\mathtt{ADA}})\|x^0-x^\star\|^2+\frac{1}{C \Delta}\|\lambda^0-\lambda^\star\|^2\right] \ .
    \end{equation} 
    Furthermore, 
    if $h$ is $L_h$-smooth, then for averaged iterates $\Tilde \lambda^{t}$ computed each loop, \begin{equation}\label{eq:dual-iterate-bound}
        \|\Tilde\lambda^t-\lambda^\star\|^2 \leq  L_h (M\Delta+1)\left[(C/\Delta + 2 L_{\eps,r}^{\mathtt{ADA}})\|x^0-x^\star\|^2+\frac{1}{C\Delta} \|\lambda^0-\lambda^\star\|^2\right] \ .
    \end{equation}
    where $M$ is an upper bound for $\|\nabla g(x)\|$ in the neighborhood outlined above \labelcref{eq:primal-iterate-bound}.
\end{proposition}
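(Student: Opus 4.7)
My plan is to leverage the $Q$-function primal-dual framework from Section~\ref{subsec:Q-for-compositions} and mirror the telescoping analysis that Zhang and Lan~\cite{zhang2022solving} developed for smooth constrained optimization. Since this proposition asserts only iterate boundedness (not $(\eps,r)$-optimality), I can evaluate the $Q$-bound at a saddle point $z^\star$ and use $\hQ(z^t,z^\star)\geq 0$ to drop the gap term on the left, keeping only the distance-squared contributions. The first step is a per-iteration three-block descent inequality: the $\nu$-update (Line 4) is a gradient evaluation whose quadratic error is controlled by $\Lada$ via~\eqref{eq:Lada-definition} (combining the smoothness of each $g_j$ with an $\ell^1$ weighting by dual multipliers in $\Lambda_r$), while the strongly convex inner subproblems on Lines 8--9 admit standard three-point inequalities. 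Together these should yield
\[
\omega_t \hQ(z^t,z) \leq \tfrac{\omega_t \eta_t}{2}\bigl(\|x^{t-1}-x\|^2-\|x^t-x\|^2\bigr) + \tfrac{\tilde\omega^{(t)}\gamma^{(t)}}{2}\bigl(\|\lambda^{t-1}-\lambda\|^2-\|\tilde\lambda^t-\lambda\|^2\bigr) + R_t(z),
\]
with residuals $R_t$ calibrated by conditions~\eqref{cond:a1}--\eqref{cond:c3} to telescope away on summation.

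Summing over $t=1,\ldots,T$, setting $z=z^\star$, and using $\sum_t\omega_t\hQ(z^t,z^\star)\geq 0$ would give
\[
\omega_T\eta_T(\tau_T+1)\|x^T-x^\star\|^2+\tilde\omega^{(T)}\gamma^{(T)}\|\tilde\lambda^T-\lambda^\star\|^2 \leq A\|x^0-x^\star\|^2+B\|\lambda^0-\lambda^\star\|^2.
\]
Under the schedules~\eqref{eq:outerloop-stepsizes}--\eqref{eq:innerloop-stepsizes}, condition~\eqref{cond:a4} guarantees the primal coefficient on the left is bounded below by a constant multiple of $\Lada$, while unwinding the initial and cross-term contributions on the right gives $A=C/\Delta+2\Lada$ (with $2\Lada$ coming from the initial $\eta_1(\tau_1+1)$ and $C/\Delta$ absorbing the sliding contributions through $\beta^{(t)}=C\tilde M_t$) and $B=1/(C\Delta)$ (from $\gamma^{(1)}=\tilde M_1/C$). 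Dividing through by $2\Lada$ then isolates~\eqref{eq:primal-iterate-bound}.

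For~\eqref{eq:dual-iterate-bound} I would exploit that $L_h$-smoothness of $h$ makes $\nabla h$ Lipschitz and gives $\lambda^\star=\nabla h(g(x^\star))$. The optimality condition of the inner $\lambda$-update on Line 9 expresses each $\lambda_s^{(t)}$ as $\nabla h$ at an argument of the form $\nu^t(y_s^{(t)}-\underline x^t)+g(\underline x^t)-\gamma^{(t)}(\lambda_s^{(t)}-\lambda_{s-1}^{(t)})$; since $\nu^t=\nabla g(\underline x^t)$, this argument lies within distance $M\|y_s^{(t)}-x^\star\|$ of $g(x^\star)$ up to a dual-movement correction scaling like $M\Delta$ (reflecting $S_t=\lceil M_t\Delta t\rceil$). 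Averaging over $s$, applying $L_h$-Lipschitzness of $\nabla h$, and substituting~\eqref{eq:primal-iterate-bound} yields the claimed $2L_h(M\Delta+1)$ factor. The main obstacle I anticipate is precisely this inner-loop bookkeeping: the $x$- and $\lambda$-updates are solved only approximately, $\tilde\lambda^t$ is a time average rather than a minimizer, and the inner stepsizes~\eqref{eq:innerloop-stepsizes} must simultaneously absorb the smoothness error from $\nu^t$, respect the strong convexity of the dual subproblem, and keep the outer telescoping clean -- tracking all constants to match the $(M\Delta+1)$ factor exactly will be the most delicate part.
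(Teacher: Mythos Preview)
Your primal argument is essentially the paper's: invoke the aggregated $Q$-bound (Proposition~\ref{prop:Q-convergence-bound}), set $z=z^\star$, drop $\sum_t\omega_t\hQ(z^t,z^\star)\geq 0$, and read off~\eqref{eq:primal-iterate-bound} from the term $\tfrac{\omega_T\eta_T}{2}\|x^T-x^\star\|^2$ on the left (note: the surviving primal coefficient is $\tfrac{\omega_T\eta_T}{2}=\Lada$ exactly under~\eqref{eq:outerloop-stepsizes}, not $\omega_T\eta_T(\tau_T+1)$; condition~\eqref{cond:a4} is used inside the proof of the $Q$-bound, not here). The right-hand constants $A=C/\Delta+2\Lada$ and $B=1/(C\Delta)$ are exactly what you identify, arising from $\tilde\omega^{(1)}\beta^{(1)}=C/\Delta$, $\omega_1\eta_1=2\Lada$, and $\tilde\omega^{(1)}\gamma^{(1)}=1/(C\Delta)$ with $\tau_1=0$ killing the $U_{g^*}$ term.

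For the dual bound, however, you miss the mechanism the paper actually uses. Your displayed term $\tilde\omega^{(T)}\gamma^{(T)}\|\tilde\lambda^T-\lambda^\star\|^2$ has coefficient $1/(C\Delta T)$, which decays and cannot give a bound uniform in $t$; and your fallback via the optimality condition $\lambda_s^{(t)}=\nabla h(\cdot)$ plus $L_h$-Lipschitzness of $\nabla h$ needs control of both the inner primal iterates $y_s^{(t)}$ and the dual increment $\gamma^{(t)}\|\lambda_s^{(t)}-\lambda_{s-1}^{(t)}\|$, the latter being circular. The paper instead uses the \emph{dual} consequence of $L_h$-smoothness: $h^*$ is $1/L_h$-strongly convex, so the three-point inequality for Line~9 carries an extra additive term $\tfrac{1}{2L_h}\|\lambda_s^{(t)}-\lambda\|^2$. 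This survives all summations and appears on the left of Proposition~\ref{prop:Q-convergence-bound} as $\sum_{t}\sum_{s}\tfrac{\omega_t}{2L_h S_t}\|\lambda_s^{(t)}-\lambda^\star\|^2$ with a coefficient that does \emph{not} decay. Keeping a single $(t,s)$ summand yields $\|\lambda_s^{(t)}-\lambda^\star\|^2\leq \tfrac{2L_h S_t}{\omega_t}[\text{RHS}]$, and $S_t/\omega_t=\lceil M_t\Delta t\rceil/t\leq M\Delta+1$ delivers the $2L_h(M\Delta+1)$ factor directly; convexity of the norm then passes the bound to the average $\tilde\lambda^t$. No inner-loop bookkeeping on $y_s^{(t)}$ or dual increments is needed.
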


Moreover, under such choices, the following theorem explicitly bounds the number of gradient and proximal oracle calls required to reach any target $(\epsilon,r)$-optimality.

\begin{theorem}
\label{thm:smooth-composite-convergence}
    Consider any problem of the form~\labelcref{eq:WLOG-main-problem} with each $g_j$ being $L_j$-smooth,  and constants $\Delta, C, \epsilon, r > 0$. Then \Cref{alg:FCM} with stepsizes \labelcref{eq:outerloop-stepsizes} and \labelcref{eq:innerloop-stepsizes} must find an $(\eps,r)$-optimal solution~\labelcref{def:optimality-conditions} with complexity bounds   
    \begin{equation}\label{eq:Smooth-UFCM-thm-bounds}
        \Neps = \sqrt{\frac{(C/\Delta+2L_{\eps,r}^{\mathtt{ADA}})D_x^2+{\color{blue}2}/(C\Delta)(D_\lambda^2+r^2)}{\eps}}, \quad \Peps = \lceil\Neps \rceil+\lceil\Neps \rceil^2\Delta M ,
    \end{equation} 
     {\color{blue} where $M$ is an upper bound for $\|\nabla g(x)\|$ in the neighborhood outlined in \labelcref{eq:primal-iterate-bound}}  
\end{theorem}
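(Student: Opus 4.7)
The plan is to invoke \cref{lemma:Q-small->eps-opt} and reduce the claim to showing that after $T = \lceil \Neps\rceil$ outer iterations, $\hQ(\bar z^T, (x^\star, \hat\lambda, \nabla g(\bar x^T))) \leq \eps$, from which $(\eps,r)$-optimality of $\bar x^T$ is immediate. The analysis combines Zhang--Lan's $Q$-decomposition with the sliding framework, telescoping the three pieces $\hQ = \hQ_\nu + \hQ_x + \hQ_\lambda$ separately under the stepsize conditions \labelcref{cond:a1}--\labelcref{cond:c3}.

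First I would derive one-step progress inequalities for each piece. The $\hQ_\nu$ bound follows from $L_j$-smoothness of each $g_j$ together with the choice $\nu^t_j = \nabla g_j(\underline x^t)$; weighting component $j$ by a dual variable in $\Lambda_r$ produces coefficient $(\lambda_j+r)L_j$, which aggregates to $\Lada$. The $\hQ_x$ bound follows from the three-point inequality for the primal prox step (line 8 of \Cref{alg:FCM}) and contributes two distinct Bregman quadratic terms, one scaled by $\eta_t$ (look-back to $x^{t-1}$) and one scaled by $\beta^{(t)}$ (look-back to $y_{s-1}^{(t)}$). The $\hQ_\lambda$ bound follows from the three-point inequality for the dual prox step (line 9) and contributes a $\gamma^{(t)}$-scaled Bregman term along with cross-terms in $\langle\nu^t(y_s^{(t)}-y_{s-1}^{(t)}),\lambda\rangle$ absorbed by \labelcref{cond:b3}.

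Next I would telescope: summing over inner iterations $s=1,\ldots,S_t$ and outer iterations $t=1,\ldots,T$ with weights $\omega_t$, and using \labelcref{cond:a1}--\labelcref{cond:c3} to cancel crossing terms (in particular \labelcref{cond:c3} handling the $\rho^{(t+1)}\|\nu^t\|^2$ carryover between consecutive outer iterations) collapses the bound to
\[ \textstyle \sum_{t=1}^T \omega_t \hQ(z^t, z) \leq \tfrac{\omega_1\eta_1 + \tilde\omega^{(1)}\beta^{(1)}}{2}\|x^0-x\|^2 + \tfrac{\tilde\omega^{(1)}\gamma^{(1)}}{2}\|\lambda^0-\lambda\|^2. \]
Plugging in \labelcref{eq:outerloop-stepsizes}--\labelcref{eq:innerloop-stepsizes} gives $\omega_1\eta_1 = 2\Lada$, $\tilde\omega^{(1)}\beta^{(1)} = C/\Delta$, and $\tilde\omega^{(1)}\gamma^{(1)} = 1/(C\Delta)$. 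Using convexity of $\hQ$ in its first argument, dividing by $\sum_t\omega_t = T(T+1)/2$, evaluating at $z=(x^\star,\hat\lambda,\nabla g(\bar x^T))$, and noting $\|\lambda^0-\hat\lambda\|^2 \leq 2(D_\lambda^2+r^2)$ since $\hat\lambda \in B(\lambda^\star,r)$ then yields
\[ \hQ(\bar z^T, (x^\star,\hat\lambda,\nabla g(\bar x^T))) \leq \frac{(C/\Delta + 2\Lada)D_x^2 + (D_\lambda^2+r^2)/(C\Delta)}{T^2}, \]
up to harmless absolute constants. Setting the right-hand side at most $\eps$ and solving for $T$ delivers exactly $\Neps$.

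The main obstacle is the cross-iteration bookkeeping: the sliding-style carryover terms between consecutive outer iterations (the $\rho^{(t+1)}\|\nu^t\|^2$ factors absorbed by \labelcref{cond:c3}) couple the dual iterate $\lambda_0^{(t+1)}$ inherited from the previous inner loop to the dual prox at the start of the next, and tracking these shifts while simultaneously replacing a single smoothness constant with the aggregated $\Lada$ is the central delicacy of the universal analysis versus the direct constrained argument of Zhang--Lan. The proximal complexity is then an immediate count: each outer $t$ performs $S_t = \lceil M_t\Delta t\rceil \leq M\Delta t + 1$ inner iterations, using $\|\nu^t\|\leq M$ guaranteed by \cref{lemma:h-smooth-lambda-iterate-bound}, so $\sum_{t=1}^{\lceil \Neps\rceil} S_t \leq \lceil\Neps\rceil + \lceil\Neps\rceil^2\Delta M$, matching $\Peps$.
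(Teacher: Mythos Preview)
Your proposal is correct and follows essentially the same route as the paper: establish the aggregate bound $\sum_t \omega_t \hQ(z^t,z) \leq \tfrac{\omega_1\eta_1+\tilde\omega^{(1)}\beta^{(1)}}{2}\|x^0-x\|^2 + \tfrac{\tilde\omega^{(1)}\gamma^{(1)}}{2}\|\lambda^0-\lambda\|^2$ via the three-part decomposition and conditions \labelcref{cond:a1}--\labelcref{cond:c3}, pass to the averaged iterate, evaluate at $(x^\star,\hat\lambda,\nabla g(\bar x^T))$, and invoke \cref{lemma:Q-small->eps-opt}. One small caveat: the step you describe as ``convexity of $\hQ$ in its first argument'' is not literally that, since the bilinear term $\langle\tilde\lambda^t,\nu^t x - g^*(\nu^t)\rangle$ is neither convex nor concave jointly in $(\tilde\lambda^t,\nu^t)$; the paper handles this by defining $\bar\nu_j^T$ as the $\omega_t\tilde\lambda_j^t$-weighted (not $\omega_t$-weighted) average so that Jensen applies componentwise to $g_j^*$, a detail worth making explicit.
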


The following corollaries simplify the above bounds by considering particular choices of $\Delta$, $C$, and $r$. The first corollary presents an upper bound in terms of a primal-dual distance while avoiding reliance on knowledge of any upper bounds on initial distances to optimality. The second corollary provides an improved gradient complexity bound depending only on primal distances at the cost of requiring knowledge of upper bounds on the initial primal and dual distances to a saddle point. Our extended theory in \cref{section:Holder-Compositions} and \cref{section:UC-restarting-comp} will focus on generalizing this second, stronger result. The remainder of this section is dedicated to proving these results.
\begin{corollary}\label{cor:smooth-composite-rates}
      For any $\epsilon>0$, setting $C=\sqrt{2}/2$, $\Delta = \frac{\sqrt{2}}{4L_{\eps,r}^{\mathtt{ADA}}}$ and $r=\sqrt{\eps}$, Algorithm~\ref{alg:FCM} with stepsizes \labelcref{eq:outerloop-stepsizes} and \labelcref{eq:innerloop-stepsizes} must find an $\eps$-optimal solution with complexity bounds
      $$\Neps = {\bigO}\left(\sqrt{\frac{ L_{\eps,r}^{\mathtt{ADA}}(D_x^2+D_\lambda^2+\eps)}{\eps}}\right), \quad \Peps = {\bigO}\left(\sqrt{\frac{ L_{\eps,r}^{\mathtt{ADA}}(D_x^2+D_\lambda^2+\eps)}{\eps}}+\frac{ M(D_x^2+D_\lambda^2+\eps)}{\eps}\right)$$
    where $M$ is an upper bound on $\|\nabla g(x)\|$ for all $x \in B(x^\star,\sqrt{2(D_x^2+D_\lambda^2)})$  
\end{corollary}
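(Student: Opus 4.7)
The plan is simply to substitute the chosen constants into the bounds of \cref{thm:smooth-composite-convergence} and simplify, then use the primal iterate bound of \cref{lemma:h-smooth-lambda-iterate-bound} to justify the stated neighborhood in which $M$ is defined. The parameters $C$, $\Delta$, $r$ satisfy the positivity requirements of the theorem, so the complexity bounds \labelcref{eq:Smooth-UFCM-thm-bounds} apply directly and no re-verification of conditions \labelcref{cond:a1}--\labelcref{cond:c3} is needed.

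First I would compute the two products appearing in the bounds. With $C=\sqrt{2}/2$ and $\Delta = \sqrt{2}/(4\Lada)$, we get $C\Delta = 1/(4\Lada)$, so $1/(C\Delta) = 4\Lada$, and $C/\Delta = 2\Lada$. Substituting into \labelcref{eq:Smooth-UFCM-thm-bounds} gives
\begin{equation*}
\Neps = \sqrt{\frac{4\Lada D_x^2 + 4\Lada(D_\lambda^2+r^2)}{\eps}} = 2\sqrt{\frac{\Lada(D_x^2+D_\lambda^2+r^2)}{\eps}} \ .
\end{equation*}
Then using $r=\sqrt{\eps}$ so that $r^2=\eps$ yields the claimed $\Neps$ bound. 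For $\Peps$, I would compute
\begin{equation*}
\Neps^2\Delta M = \frac{4\Lada(D_x^2+D_\lambda^2+\eps)}{\eps}\cdot\frac{\sqrt{2}}{4\Lada}\cdot M = \frac{\sqrt{2}\,M(D_x^2+D_\lambda^2+\eps)}{\eps} \ ,
\end{equation*}
and combine with the $\lceil\Neps\rceil$ term to obtain the claimed $\Peps$ bound up to constants.

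Second I would justify the domain in which $M$ is taken. The bound \labelcref{eq:primal-iterate-bound} from \cref{lemma:h-smooth-lambda-iterate-bound} gives, after substituting the same constants,
\begin{equation*}
\|x^t-x^\star\|^2 \leq \frac{1}{2\Lada}\bigl[4\Lada\|x^0-x^\star\|^2 + 4\Lada\|\lambda^0-\lambda^\star\|^2\bigr] = 2\bigl(\|x^0-x^\star\|^2+\|\lambda^0-\lambda^\star\|^2\bigr) \leq 2(D_x^2+D_\lambda^2) \ ,
\end{equation*}
so every primal iterate lies in the ball $B(x^\star,2(D_x^2+D_\lambda^2))$ (with $B(\cdot,R)$ the ball of squared-radius $R$ as used in the statement), validating the choice of $M$ as an upper bound on $\|\nabla g(\cdot)\|$ throughout the iterates. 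The same also covers the points at which $\nabla g$ is evaluated inside the algorithm, since the $\underline{x}^t$ and $\widetilde{x}^t$ used to compute $\nu^t$ are convex combinations or short-extrapolation updates of the $x^t$'s (and one can, if desired, enlarge the neighborhood by a small constant factor without affecting the $\mathcal{O}(\cdot)$ bound).

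No serious obstacle arises here, as the proof is essentially a bookkeeping step. The only mild subtlety is the cancellation in $C/\Delta+2\Lada=4\Lada$ and $1/(C\Delta)=4\Lada$ that makes the $D_x$ and $D_\lambda$ terms coefficient-matched so they can be grouped into a single factor of $D_x^2+D_\lambda^2+\eps$, together with absorbing $r^2=\eps$ into the parenthesized quantity. I expect the write-up to be a half page at most.
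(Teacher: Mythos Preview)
Your proposal is correct and matches the paper's intended approach: the corollary is meant to follow by direct substitution of $C,\Delta,r$ into the bound~\labelcref{eq:Smooth-UFCM-thm-bounds} of \cref{thm:smooth-composite-convergence}, together with the primal iterate bound~\labelcref{eq:primal-iterate-bound} from \cref{lemma:h-smooth-lambda-iterate-bound} to justify the neighborhood for $M$. One tiny remark: you need not worry about $\tilde{x}^t$ at all, since $\nabla g$ is only evaluated at $\underline{x}^t$, and the paper's proof of \cref{lemma:h-smooth-lambda-iterate-bound} already records that $\underline{x}^t\in\conv{x^0,\dots,x^{t-1}}$, placing it in the same ball.
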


\begin{corollary}\label{cor:smooth-composite-rates-primal-dist}
    For any $0 < \epsilon \leq \min\{1,12\Lada D_x^2\}$, setting $C=D_\lambda/D_x$, $\Delta = C/2L_{\eps,r}^{\mathtt{ADA}}$, and $r=D_\lambda \sqrt{\eps}$, Algorithm~\ref{alg:FCM} with stepsizes \labelcref{eq:outerloop-stepsizes} and \labelcref{eq:innerloop-stepsizes} must find an $\eps$-optimal solution with complexity bounds $$\Neps = {\bigO}\left(\sqrt{\frac{ L_{\eps,r}^{\mathtt{ADA}}D_x^2}{\eps}}\right), \quad \Peps = {\bigO}\left(\sqrt{\frac{ L_{\eps,r}^{\mathtt{ADA}}D_x^2}{\eps}}+\frac{ MD_xD_\lambda}{\eps}\right) \ ,$$  
   where $M$ is an upper bound on $\|\nabla g(x)\|$ for all $x \in B(x^\star,\sqrt{3D_x^2}).$ 
\end{corollary}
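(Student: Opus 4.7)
The plan is straightforward: this corollary is a direct consequence of \Cref{thm:smooth-composite-convergence}, obtained by plugging the prescribed choices of $C$, $\Delta$, and $r$ into the generic bounds~\labelcref{eq:Smooth-UFCM-thm-bounds} and simplifying. The condition $\eps \leq \min\{1, 8\Lada D_x^2\}$ is only used to absorb lower order terms into the leading $\mathcal{O}(\cdot)$ expressions. The proof contains no new analysis beyond \Cref{thm:smooth-composite-convergence} and \Cref{lemma:h-smooth-lambda-iterate-bound}; it is purely a parameter-tuning calculation.

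First, with $C = D_\lambda/D_x$ and $\Delta = C/(2\Lada)$, the two key ratios appearing in~\labelcref{eq:Smooth-UFCM-thm-bounds} collapse to $C/\Delta = 2\Lada$ and $1/(C\Delta) = 2\Lada D_x^2/D_\lambda^2$. Substituting these along with $r = D_\lambda\sqrt{\eps}$ into the gradient complexity bound gives
\begin{equation*}
\Neps = \sqrt{\frac{4\Lada D_x^2 + (2\Lada D_x^2/D_\lambda^2)(D_\lambda^2 + D_\lambda^2\eps)}{\eps}} = \sqrt{\frac{\Lada D_x^2(6 + 2\eps)}{\eps}} = \mathcal{O}\!\left(\sqrt{\frac{\Lada D_x^2}{\eps}}\right),
\end{equation*}
where the last step uses $\eps \leq 1$. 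For the proximal complexity, the second term in~\labelcref{eq:Smooth-UFCM-thm-bounds} is
\begin{equation*}
\lceil \Neps\rceil^2 \Delta M = \mathcal{O}\!\left(\frac{\Lada D_x^2}{\eps}\right)\cdot \frac{D_\lambda}{2\Lada D_x}\cdot M = \mathcal{O}\!\left(\frac{MD_xD_\lambda}{\eps}\right),
\end{equation*}
where the condition $\eps \leq 8\Lada D_x^2$ is what lets us convert $\lceil\Neps\rceil^2$ into $\mathcal{O}(\Neps^2)$ (i.e., ensures $\Neps \gtrsim 1$ so the ceiling does not dominate). Adding the first term $\lceil\Neps\rceil$ then yields the stated $\Peps$ bound.

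Finally, to justify the neighborhood in which $M$ bounds $\|\nabla g(x)\|$, I would apply~\labelcref{eq:primal-iterate-bound} from \Cref{lemma:h-smooth-lambda-iterate-bound} with the same choices. Substituting $C/\Delta = 2\Lada$ and $1/(C\Delta) = 2\Lada D_x^2/D_\lambda^2$ into that bound yields
\begin{equation*}
\|x^t - x^\star\|^2 \leq \frac{1}{2\Lada}\left[4\Lada D_x^2 + \frac{2\Lada D_x^2}{D_\lambda^2}\cdot D_\lambda^2\right] = 3D_x^2,
\end{equation*}
so every iterate lies within the ball $B(x^\star, \sqrt{3}D_x) \subseteq B(x^\star, 3D_x^2)$ under the assumed regime on $\eps$, validating the local Lipschitz constant $M$ used in the proximal complexity.

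There is no real obstacle here; the only mildly delicate point is tracking which terms are absorbed by the $\mathcal{O}(\cdot)$ and confirming that the upper bound on $\eps$ is exactly what is needed to do so cleanly, in particular for handling the ceiling in $\lceil\Neps\rceil^2$ and for ensuring the $r^2 = D_\lambda^2 \eps$ term is dominated by the $D_\lambda^2$ term inside the square root. Both reductions are immediate from $\eps \leq \min\{1,8\Lada D_x^2\}$.
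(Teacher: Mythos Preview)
Your proposal is correct and proceeds exactly as intended: the paper does not write out a separate proof of this corollary, but the implicit derivation is precisely the substitution-and-simplify you carry out from \Cref{thm:smooth-composite-convergence} and \Cref{lemma:h-smooth-lambda-iterate-bound}. One small caveat: the containment $B(x^\star,\sqrt{3}D_x)\subseteq B(x^\star,3D_x^2)$ is not literally a set inclusion of Euclidean balls in general; the paper's notation $B(x^\star,3D_x^2)$ here is shorthand for the region where $\|x-x^\star\|^2\leq 3D_x^2$, which is exactly the bound you derived, so no additional assumption on $\eps$ is needed for that step.
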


\subsection{Analysis of UFCM for Compositions with Smooth Components}
Our theory primarily follows from a sequence of three lemmas, which directly extend equivalent results developed for the case of smooth constrained optimization by~\citep{zhang2022solving}. {\color{blue} In our analysis, note that we select $\hat g$, our analytical proxy for $g(x^t)$, differently than the projection choice used by Zhang and Lan.} Throughout, we let $L_h\in (0,\infty]$ denote the smoothness constant of $h$, set to be $\infty$ if $h$ is nonsmooth, as occurs in the special case of constrained optimization. For each result, we refer to the paralleled proof in their special case. For results requiring generalization, we defer the proofs to Appendix~\ref{section:appendix-smooths-comp}. Our results show that the analysis technique of Zhang and Lan is quite robust, generalizing to compositions, managing new $h^*$ terms, and benefiting from any smoothness in $h$.

The first lemma provides a useful smoothness bound on the Lagrangian from our Approximate Dualized Aggregate smoothness constant. 
\begin{lemma}[Lemma 2,~\citet{zhang2022solving}]\label{lemma:zhang-smoothness-growth-lemma}
    If each $g_j$ is $L_j$-smooth, then
    \begin{align*}
        \inner{\lambda}{U_{g^*}(\nu;\hat{\nu})} \geq \frac{1}{2\Lada}\left\|\sum_{j=1}^m \lambda_j(\nu_j-\hat{\nu}_j)\right\|^2, \ \forall \lambda \in \Lambda_r, \ \forall \nu, \hat \nu \in \{\nabla g(x) : x \in \X\} \ .
    \end{align*}   
\end{lemma}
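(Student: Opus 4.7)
The plan is to establish the inequality by (i) turning the smoothness of each $g_j$ into a strong convexity lower bound on the Bregman divergence $U_{g_j^*}$, and (ii) applying a weighted Cauchy--Schwarz inequality to reorganize the resulting sum into the squared norm on the right-hand side, with $\Lada$ appearing naturally as the relevant weight.

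First, I would unfold the inner product as $\ip{\lambda,U_{g^*}(\nu;\hat\nu)} = \sum_{j=1}^m \lambda_j\, U_{g_j^*}(\nu_j;\hat\nu_j)$. Since each $g_j$ is convex and $L_j$-smooth, its Fenchel conjugate $g_j^*$ is $1/L_j$-strongly convex (a standard duality fact for smoothness/strong convexity). Therefore
\begin{equation*}
U_{g_j^*}(\nu_j;\hat\nu_j) \;\geq\; \frac{1}{2L_j}\,\|\nu_j-\hat\nu_j\|^2,
\end{equation*}
and multiplying by $\lambda_j\geq 0$ (recall \Cref{lemma:lambda nonneg} gives $\Lambda\subseteq\mathbb{R}_+^m$, so $\lambda\in\Lambda_r$ is componentwise nonnegative) and summing yields
\begin{equation*}
\ip{\lambda,U_{g^*}(\nu;\hat\nu)} \;\geq\; \sum_{j=1}^m \frac{\lambda_j}{2L_j}\,\|\nu_j-\hat\nu_j\|^2.
\end{equation*}

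Next, I would split each weight as $\lambda_j = \sqrt{\lambda_j L_j}\cdot\sqrt{\lambda_j/L_j}$ so that Cauchy--Schwarz on the scalar coefficients $\sqrt{\lambda_j L_j}$ and the vectors $\sqrt{\lambda_j/L_j}\,(\nu_j-\hat\nu_j)\in\mathbb{R}^n$ gives
\begin{equation*}
\Bigl\|\sum_{j=1}^m \lambda_j(\nu_j-\hat\nu_j)\Bigr\|^2 \;\leq\; \Bigl(\sum_{j=1}^m \lambda_j L_j\Bigr)\Bigl(\sum_{j=1}^m \frac{\lambda_j}{L_j}\|\nu_j-\hat\nu_j\|^2\Bigr).
\end{equation*}
Because $\lambda\in\Lambda_r=B(\lambda^\star,r)\cap\Lambda$ forces $\lambda_j\leq \lambda_j^\star+r$ for each coordinate, the first factor is bounded above by $\sum_j(\lambda_j^\star+r)L_j=\Lada$. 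Rearranging,
\begin{equation*}
\sum_{j=1}^m \frac{\lambda_j}{L_j}\|\nu_j-\hat\nu_j\|^2 \;\geq\; \frac{1}{\Lada}\,\Bigl\|\sum_{j=1}^m \lambda_j(\nu_j-\hat\nu_j)\Bigr\|^2,
\end{equation*}
and combining with the strong-convexity bound from the previous paragraph completes the proof.

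The only genuinely nontrivial step is recognizing that weighted Cauchy--Schwarz with the asymmetric split $\lambda_j = \sqrt{\lambda_jL_j}\cdot\sqrt{\lambda_j/L_j}$ is exactly what produces the aggregate factor $\sum_j\lambda_j L_j$, which the definition of $\Lada$ (\ref{eq:Lada-definition}) was tailor-made to dominate via the ball constraint $\Lambda_r$. Everything else is just Fenchel duality for smoothness/strong convexity and the componentwise nonnegativity of $\lambda$, both already at hand in the excerpt.
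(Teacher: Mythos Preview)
Your proof is correct. The route differs from the paper's, though, and the comparison is instructive. You bound each $U_{g_j^*}(\nu_j;\hat\nu_j)$ individually via the $1/L_j$-strong convexity of $g_j^*$, and then reassemble the squared norm of the aggregate via a weighted Cauchy--Schwarz split $\lambda_j=\sqrt{\lambda_jL_j}\cdot\sqrt{\lambda_j/L_j}$. The paper instead first uses the Bregman duality identity $U_{g_j^*}(\nu_j;\hat\nu_j)=U_{g_j}(\hat x;x)$ (valid because $\nu=\nabla g(x)$, $\hat\nu=\nabla g(\hat x)$) to rewrite $\inner{\lambda}{U_{g^*}(\nu;\hat\nu)}=U_{\bar g}(\hat x;x)$ for the aggregate function $\bar g=\sum_j\lambda_j g_j$, and then applies cocoercivity once to $\bar g$, which is $\sum_j\lambda_jL_j$-smooth. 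This produces the factor $\sum_j\lambda_jL_j\le\Lada$ directly, without any Cauchy--Schwarz step. Your argument is a touch more elementary in that it never invokes the Bregman duality identity and does not use the hypothesis that $\nu,\hat\nu$ are gradients of $g$ at common points; the paper's argument is shorter and explains more transparently why the constant is exactly $\sum_j\lambda_jL_j$ (smoothness is additive under nonnegative combinations), which is also what makes the H\"older generalization in \cref{lemma:Holder-smoothness-growth} go through cleanly.
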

Next, we provide a general convergence bound on the $Q_x$ and $Q_\lambda$ functions associated with the primal and dual variables, extending the result of~\citep[Equation (4.19)]{zhang2022solving} and proven in Appendix~\ref{section:appendix-smooths-comp}. When $h$ is nonsmooth (i.e., $L_h = +\infty$), the quantity $1/L_h$ below should be interpreted at zero.
\begin{lemma}\label{lemma:x-lambda-convergence}
        Suppose the stepsizes satisfy~\eqref{cond:a1}-\eqref{cond:c3}, and let $z^t := (x^t; \Tilde{\lambda}^t, \nu^t)$ denote the iterates of \Cref{alg:FCM}. Then $z^t$ satisfies the following for any $z=(x;\lambda,\nu) \in \X \times \Lambda_r \times V$, \begin{align*}\begin{split}
    \sum_{t=1}^T \omega_t [\hQ_x(z^t,z)&+\hQ_\lambda(z^t,z)]+\sum_{t=1}^T\sum_{s=1}^{S_t}\frac{\omega_t}{S_t}\muhstar\|\lambda_s^{(t)}-\lambda\|^2+\sum_{t=1}^T \frac{\omega_t\eta_t}{2} \|x^t-x^{t-1}\|^2\\&+\frac{\omega_T\eta_T}{2}\|x^T-x\|^2-\frac{\omega_1\eta_1}{2}\|x^0-x\|^2 \leq \frac{\Tilde{\omega}^{(1)}}{2}\left(\gamma^{(1)}\|\lambda_0^{(1)}-\lambda\|^2+\beta^{(1)}\|y_0^{(1)}-x\|^2\right) \ .
    \end{split}
\end{align*}
\end{lemma}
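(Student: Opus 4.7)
My plan is to derive a per-step inequality from each inner-loop proximal update, telescope within the inner loop, and then telescope across outer loops using the compatibility conditions \eqref{cond:c1}--\eqref{cond:c3}. First, I would apply the three-point identity to each subproblem in Lines 8 and 9. Since the objective in Line 8 is $(\eta_t + \beta^{(t)})$-strongly convex, optimality of $y_s^{(t)}$ yields, for any $x \in \mathcal{X}$,
\begin{align*}
\inner{\Tilde{h}^{(t),s}}{y_s^{(t)} - x} &+ u(y_s^{(t)}) - u(x) + \frac{\eta_t}{2}\norm{y_s^{(t)} - x^{t-1}}^2 - \frac{\eta_t}{2}\norm{x - x^{t-1}}^2 \\
&\leq \frac{\beta^{(t)}}{2}\left[\norm{y_{s-1}^{(t)} - x}^2 - \norm{y_s^{(t)} - x}^2 - \norm{y_s^{(t)} - y_{s-1}^{(t)}}^2\right].
\end{align*}
For Line 9, using that $h^*$ is $\muhstar\cdot 2$-strongly convex when $h$ is $L_h$-smooth, optimality of $\lambda_s^{(t)}$ gives, for any $\lambda \in \Lambda$,
\begin{align*}
\inner{\lambda - \lambda_s^{(t)}}{\nu^t(y_s^{(t)} - \underline{x}^t) + g(\underline{x}^t)} &+ h^*(\lambda_s^{(t)}) - h^*(\lambda) + \muhstar\norm{\lambda_s^{(t)}-\lambda}^2 \\
&\leq \frac{\gamma^{(t)}}{2}\left[\norm{\lambda_{s-1}^{(t)}-\lambda}^2 - \norm{\lambda_s^{(t)}-\lambda}^2 - \norm{\lambda_s^{(t)}-\lambda_{s-1}^{(t)}}^2\right].
\end{align*}

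Next, I would combine these with the definitions of $\hQ_x(z^t,z)$ and $\hQ_\lambda(z^t,z)$. Substituting the explicit form of $\Tilde{h}^{(t),s}$ produces a ``gradient extrapolation error'' of the form $\inner{(\nu^t)^\top(\lambda_{s-1}^{(t)} - \lambda_{s-2}^{(t)})}{y_{s-1}^{(t)} - y_s^{(t)}}$ for $s \geq 2$ (and a corresponding $\rho^{(t)}$-scaled error at $s=1$ involving $\nu^{t-1}$). Via Cauchy--Schwarz and Young's inequality, each such error is absorbed into $\frac{\beta^{(t)}}{2}\norm{y_s^{(t)} - y_{s-1}^{(t)}}^2 + \frac{\gamma^{(t)}}{2}\norm{\lambda_{s-1}^{(t)} - \lambda_{s-2}^{(t)}}^2$ precisely when condition \eqref{cond:b3} holds. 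After this absorption, summing the two inequalities over $s = 1,\dots,S_t$ yields a clean telescoping in $s$: all interior $\norm{y_s^{(t)}-x}^2$, $\norm{\lambda_s^{(t)}-\lambda}^2$ terms cancel, leaving only the initial distances $\norm{y_0^{(t)}-x}^2$, $\norm{\lambda_0^{(t)}-\lambda}^2$ and boundary terms at $s = S_t, S_t-1$. Jensen's inequality, applied using the averaging formulas $x^t = \sum_s y_s^{(t)}/S_t$ and $\Tilde{\lambda}^t = \sum_s \lambda_s^{(t)}/S_t$, converts this averaged bound into an inequality at $(x^t, \Tilde{\lambda}^t)$.

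Then I would weight by $\omega_t$ and sum over $t = 1,\dots,T$. Writing $\Tilde{\omega}^{(t)} = \omega_t/S_t$ and exploiting the carryover $\lambda_0^{(t+1)} = \lambda_{S_t}^{(t)}$, $y_0^{(t+1)} = y_{S_t}^{(t)}$, $\lambda_{-1}^{(t+1)} = \lambda_{S_t-1}^{(t)}$ from Line 11, the boundary terms at $s = S_t$ of outer step $t$ match the $s = 0$ terms of outer step $t+1$; conditions \eqref{cond:c1}--\eqref{cond:c2} force these coefficients to be monotone, so the cross-outer-loop bookkeeping telescopes. The momentum weight $\rho^{(t+1)} = \Tilde{\omega}^{(t)}/\Tilde{\omega}^{(t+1)}$ is engineered so that the $s=1$ momentum error at outer step $t+1$ cancels the residual $s = S_t$ error at outer step $t$, with \eqref{cond:c3} controlling the corresponding Young absorption. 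Finally, conditions \eqref{cond:a1}--\eqref{cond:a4} (along with $\underline{x}^t$ and $\theta_t$ chosen as in Line 3) handle the outer-loop primal contributions $\omega_t\eta_t\norm{x^t - x^{t-1}}^2$ and telescope the distance term $\frac{\omega_T\eta_T}{2}\norm{x^T - x}^2 - \frac{\omega_1\eta_1}{2}\norm{x^0 - x}^2$.

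The main obstacle is the inter-outer-loop coupling of the dual extrapolation: the $\rho^{(t)}$-momentum at $s=1$ uses $\nu^{t-1}$ rather than $\nu^t$, so matching it against the previous outer loop's leftover $\nu^{t-1}$-error requires careful alignment of which $\nu$'s appear where, and the Young bound for this cross term consumes exactly the slack afforded by \eqref{cond:c3} with $\rho^{(t+1)}=\Tilde{\omega}^{(t)}/\Tilde{\omega}^{(t+1)}$. Beyond this bookkeeping, the argument is a direct generalization of Zhang and Lan's proof of their (4.19), with two modifications: (i) retaining the strong-convexity remainder $\muhstar\norm{\lambda_s^{(t)} - \lambda}^2$ from $h^*$ rather than discarding it (which produces the new $\muhstar$-weighted sum in the lemma), and (ii) working with a general conjugate $h^*$ in place of the nonpositive-orthant indicator, so that the $-h^*(\lambda_s^{(t)}) + h^*(\lambda)$ pairing naturally reproduces the $\hQ_\lambda$ term in the final bound.
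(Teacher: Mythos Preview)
Your outline is correct and follows essentially the same route as the paper's proof: three-point inequalities for the two prox steps, expansion of $\tilde h^{(t),s}$ into an extrapolation error absorbed via Young and \eqref{cond:b3}, inner-loop telescoping plus Jensen on the averages, then outer-loop telescoping using \eqref{cond:c1}--\eqref{cond:c3} with the carryovers from Line~11. Two small corrections: (i) your first displayed three-point inequality drops the strong-convexity remainder $\tfrac{\eta_t}{2}\|x-y_s^{(t)}\|^2$ coming from the $\eta_t$-quadratic, and this term is exactly what survives (after Jensen and summation) as $\tfrac{\omega_T\eta_T}{2}\|x^T-x\|^2$; (ii) among the outer-loop conditions only \eqref{cond:a1} is used here to telescope $\sum_t\omega_t\eta_t(\|x^t-x\|^2-\|x^{t-1}-x\|^2)$, while \eqref{cond:a2}--\eqref{cond:a4}, $\underline{x}^t$, and $\theta_t$ belong to the $Q_\nu$ analysis and play no role in this lemma.
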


Lastly, we provide a general convergence bound on the $Q_\nu$ function associated with the conjugate variables $\nu$, which requires only mild modifications from the analysis of Zhang and Lan~\citep[Proposition 2]{zhang2022solving}, proven in Appendix~\ref{section:appendix-smooths-comp}. 
\begin{lemma}\label{lemma:nu-convergence-bound} Suppose the stepsizes satisfy~\eqref{cond:a1}-\eqref{cond:c3}. Then $z^t$ satisfies the following for any $z=(x;\lambda,\nu) \in \X \times \Lambda_r \times V$,
     \begin{align*}\begin{split}
              \sum_{t=1}^T \omega_t\left[\hQ_\nu(z^t,z)\right]
            &\leq - \left[\omega_T(\tau_T+1)\left(\sum_{j=1}^m \lambda_jU_{g_j^*}(\nu_j;\nu_j^T)\right)-\omega_T\inner{\sum_{j=1}^m \lambda_j(\nu_j-\nu_j^T)}{x^T-x^{T-1}}\right]\\
            &-\sum_{t=2}^T\left[\omega_t\tau_t\left(\sum_{j=1}^m \lambda_jU_{g_j^*}(\nu_j^t;\nu_j^{t-1})\right)-\omega_{t-1}\inner{\sum_{j=1}^m \lambda_j(\nu_j^{t-1}-\nu_j^{t})}{(x^{t-1}-x^{t-2})}\right]\\
            &+\omega_1\tau_1\inner{\lambda}{U_{g^*}(\nu,\nu^0)} \ .
            \end{split}
        \end{align*}
\end{lemma}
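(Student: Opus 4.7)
The plan is to first derive a per-iteration equality for $\hQ_\nu(z^t,z)$ from the optimality of the $\nu^t$ update, then handle the mismatch between $x^t$ and $\Tilde{x}^t$ with the momentum relation, and finally sum and telescope using condition \eqref{cond:a2} together with $\omega_t\theta_t=\omega_{t-1}$.

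For the first step, observe that $\nu^t_j=\nabla g_j(\underline{x}^t)$ is precisely the maximizer of $\phi_j(\nu_j):=\inner{\nu_j}{\Tilde{x}^t}-g_j^*(\nu_j)-\tau_t U_{g_j^*}(\nu_j;\nu_j^{t-1})$. Writing the first-order condition $\Tilde{x}^t=\nabla g_j^*(\nu_j^t)+\tau_t(\nabla g_j^*(\nu_j^t)-\nabla g_j^*(\nu_j^{t-1}))$, pairing with $(\nu_j-\nu_j^t)$, and applying the standard three-point Bregman identity $\inner{\nabla g_j^*(\nu_j^t)-\nabla g_j^*(\nu_j^{t-1})}{\nu_j-\nu_j^t}=U_{g_j^*}(\nu_j;\nu_j^{t-1})-U_{g_j^*}(\nu_j;\nu_j^t)-U_{g_j^*}(\nu_j^t;\nu_j^{t-1})$ yields the identity
\begin{equation*}
\inner{\nu_j-\nu_j^t}{\Tilde{x}^t}-[g_j^*(\nu_j)-g_j^*(\nu_j^t)] = \tau_t U_{g_j^*}(\nu_j;\nu_j^{t-1})-(\tau_t+1)U_{g_j^*}(\nu_j;\nu_j^t)-\tau_t U_{g_j^*}(\nu_j^t;\nu_j^{t-1}).
\end{equation*}
Multiplying by $\lambda_j\ge 0$ (valid by \cref{lemma:lambda nonneg}) and summing over $j$ gives the analogous statement for $\inner{\lambda}{\,\cdot\,}$.

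The second step turns this into a bound on $\hQ_\nu(z^t,z)$, which involves $x^t$ rather than $\Tilde{x}^t$. Decompose $\hQ_\nu(z^t,z)=\inner{\lambda(\nu-\nu^t)}{\Tilde{x}^t}-\inner{\lambda}{g^*(\nu)-g^*(\nu^t)}+\inner{\sum_j\lambda_j(\nu_j-\nu_j^t)}{x^t-\Tilde{x}^t}$ and substitute in the three-point identity. Using $\Tilde{x}^t=x^{t-1}+\theta_t(x^{t-1}-x^{t-2})$, the error term becomes $\inner{\sum_j\lambda_j(\nu_j-\nu_j^t)}{(x^t-x^{t-1})-\theta_t(x^{t-1}-x^{t-2})}$. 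After scaling by $\omega_t$ and summing in $t$, the identity $\omega_t\theta_t=\omega_{t-1}$ lets us reindex the momentum correction into pairs of adjacent terms, and the initialization $x^{-1}=x^0$ kills the $t=1$ boundary contribution.

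For the final step, the telescoping of the Bregman terms is governed by \eqref{cond:a2}: summing $\omega_t\tau_t U_{g^*}(\nu;\nu^{t-1})-\omega_t(\tau_t+1)U_{g^*}(\nu;\nu^t)$ produces a leading $\omega_1\tau_1 U_{g^*}(\nu;\nu^0)$, a trailing $-\omega_T(\tau_T+1)U_{g^*}(\nu;\nu^T)$, and intermediate coefficients $\omega_{t+1}\tau_{t+1}-\omega_t(\tau_t+1)\le 0$ that may be dropped. Similarly reindexing the momentum correction from $t\in\{1,\dots,T-1\}$ to $t\in\{2,\dots,T\}$ with weight $\omega_{t-1}$, separating the $t=T$ boundary term, and discarding the nonpositive $-\omega_1\tau_1\inner{\lambda}{U_{g^*}(\nu^1;\nu^0)}$ yields exactly the stated bound. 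The main obstacle is largely bookkeeping: tracking indices carefully across the telescope (particularly the $t=1$ boundary and the shift $t\mapsto t-1$ on the momentum terms) and keeping signs consistent when $\lambda\ge 0$ is used to preserve the direction of the Bregman inequalities.
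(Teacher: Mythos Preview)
Your proposal is correct and follows essentially the same route as the paper's own proof: both derive the three-point Bregman relation for the $\nu$-update (the paper cites Lan's three-point lemma, you derive it directly as an identity from the first-order condition, which is legitimate here since $\nu_j^t$ is the exact unconstrained maximizer), both use the decomposition $x^t-\Tilde{x}^t=(x^t-x^{t-1})-\theta_t(x^{t-1}-x^{t-2})$ together with $\omega_t\theta_t=\omega_{t-1}$ and $x^{-1}=x^0$ to telescope the momentum correction, and both use \eqref{cond:a2} to collapse the $U_{g_j^*}(\nu_j;\cdot)$ terms while dropping the nonpositive $-\omega_1\tau_1\inner{\lambda}{U_{g^*}(\nu^1;\nu^0)}$. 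Your bookkeeping of the index shift and boundary terms is accurate.
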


Combining these three lemmas gives a single convergence result for the entire gap function $Q$. This result looks nearly identical in form to Proposition 2 from~\citep{zhang2022solving} and is proven in Appendix~\ref{section:appendix-smooths-comp}. From this proposition, we then prove our claimed compactness and convergence guarantees in Proposition~\ref{lemma:h-smooth-lambda-iterate-bound} and \cref{thm:smooth-composite-convergence}. 
\begin{proposition}\label{prop:Q-convergence-bound} Consider any problem of the form \labelcref{eq:WLOG-main-problem} with stepsizes satisfying \labelcref{cond:a1}-\labelcref{cond:c3}. Then for any $z = \displaystyle(x; \lambda, \nu) \in \X \times \Lambda_r \times V$,
\begin{align*}
    &\sum_{t=1}^T \omega_t\hQ(z^t,z)+\sum_{t=1}^T\sum_{s=1}^{S_t}\frac{\omega_t}{S_t}\muhstar\|\lambda_s^{(t)}-\lambda\|^2+\frac{\omega_T\eta_T}{2}\|x^T-x\|^2\\ &\leq \frac{\Tilde{\omega}^{(1)}\beta^{(1)}+\omega_1\eta_1}{2}\|x^0-x\|^2+\frac{\Tilde{\omega}^{(1)}\gamma^{(1)}}{2}\|\lambda^0-\lambda\|^2+\omega_1\tau_1\inner{\lambda}{U_{g^*}(\nu,\nu^0)} \ .
\end{align*}
\end{proposition}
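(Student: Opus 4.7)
The plan is to add the three bounds from Lemmas~\ref{lemma:zhang-smoothness-growth-lemma}, \ref{lemma:x-lambda-convergence}, and \ref{lemma:nu-convergence-bound} together using the decomposition $\hQ(z^t,z) = \hQ_\nu(z^t,z) + \hQ_x(z^t,z) + \hQ_\lambda(z^t,z)$, and then eliminate the awkward inner-product and negative-Bregman terms coming from Lemma~\ref{lemma:nu-convergence-bound} by pairing them via Young's inequality against the positive quadratics $\frac{\omega_t\eta_t}{2}\|x^t-x^{t-1}\|^2$ already present on the left-hand side of Lemma~\ref{lemma:x-lambda-convergence}.

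First, I would add the bounds from Lemmas~\ref{lemma:x-lambda-convergence} and \ref{lemma:nu-convergence-bound} to get a single inequality whose left-hand side contains $\sum_t \omega_t \hQ(z^t,z) + \sum_{t,s}\frac{\omega_t}{S_t}\muhstar \|\lambda_s^{(t)}-\lambda\|^2 + \sum_{t=1}^T \frac{\omega_t\eta_t}{2}\|x^t-x^{t-1}\|^2 + \frac{\omega_T\eta_T}{2}\|x^T-x\|^2$, up to the initial $-\tfrac{\omega_1\eta_1}{2}\|x^0-x\|^2$ term which transfers to the right. On the right-hand side I would carry (i) the Lemma~\ref{lemma:x-lambda-convergence} initialization terms $\tfrac{\tilde\omega^{(1)}}{2}(\gamma^{(1)}\|\lambda_0^{(1)}-\lambda\|^2+\beta^{(1)}\|y_0^{(1)}-x\|^2)$, which collapse to the stated initial error using $\lambda_0^{(1)} = \lambda^0$, $y_0^{(1)} = x^0$; (ii) the $\omega_1\tau_1 \langle \lambda, U_{g^*}(\nu,\nu^0)\rangle$ term that passes through unchanged; and (iii) the leftover negative-Bregman and inner-product clumps from Lemma~\ref{lemma:nu-convergence-bound}.

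The central technical step is handling those clumps. For each $t \in \{2,\dots,T\}$, I would apply Young's inequality to bound
\begin{equation*}
\omega_{t-1}\Bigl\langle \sum_{j}\lambda_j(\nu_j^{t-1}-\nu_j^t),\, x^{t-1}-x^{t-2}\Bigr\rangle \leq \frac{\omega_t\tau_t}{2\Lada}\Bigl\|\sum_j\lambda_j(\nu_j^t-\nu_j^{t-1})\Bigr\|^2 + \frac{\omega_{t-1}^2\Lada}{2\omega_t\tau_t}\|x^{t-1}-x^{t-2}\|^2.
\end{equation*}
Lemma~\ref{lemma:zhang-smoothness-growth-lemma} then tells us that $\omega_t\tau_t\langle\lambda, U_{g^*}(\nu^t;\nu^{t-1})\rangle \geq \frac{\omega_t\tau_t}{2\Lada}\|\sum_j\lambda_j(\nu_j^t-\nu_j^{t-1})\|^2$, so the first Young term cancels the negative Bregman divergence exactly. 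Condition~\eqref{cond:a3} rewritten as $\omega_{t-1}^2\Lada/(\omega_t\tau_t) \leq \omega_{t-1}\eta_{t-1}$ ensures the second Young term is absorbed by $\tfrac{\omega_{t-1}\eta_{t-1}}{2}\|x^{t-1}-x^{t-2}\|^2$, one of the available positive quadratics on the left. For the boundary $t=T$ clump involving $(\tau_T+1)$ and $x^T-x^{T-1}$, the same argument goes through verbatim with $\omega_t\tau_t$ replaced by $\omega_T(\tau_T+1)$, and condition~\eqref{cond:a4} in the form $\eta_T(\tau_T+1) \geq \Lada$ provides exactly the slack needed to absorb the corresponding $\|x^T-x^{T-1}\|^2$ term.

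The hardest part I expect is simply getting the index bookkeeping right: verifying that each summand $\tfrac{\omega_t\eta_t}{2}\|x^t-x^{t-1}\|^2$ for $t=1,\dots,T$ gets used by exactly one Young application (the $t=1$ quadratic by the boundary $t=T$ clump via \eqref{cond:a4} reindexing, and each $t \in \{1,\dots,T-1\}$ quadratic by the Young inequality at index $t+1$ via \eqref{cond:a3}), so that every $\|x^t-x^{t-1}\|^2$ term is canceled and nothing non-telescoping survives. Once this accounting closes, combining the residual $-\tfrac{\omega_1\eta_1}{2}\|x^0-x\|^2$ from Lemma~\ref{lemma:x-lambda-convergence} with the $\tfrac{\tilde\omega^{(1)}\beta^{(1)}}{2}\|x^0-x\|^2$ initialization yields the stated right-hand side, completing the proof.
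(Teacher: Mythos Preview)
Your proposal is correct and matches the paper's proof essentially step for step: apply Lemma~\ref{lemma:zhang-smoothness-growth-lemma} to turn the negative Bregman terms from Lemma~\ref{lemma:nu-convergence-bound} into negative squared norms, kill each inner product via Young's inequality with the matching squared norm, absorb the residual $\|x^{t}-x^{t-1}\|^2$ terms using~\eqref{cond:a3}--\eqref{cond:a4} against the left-hand quadratics from Lemma~\ref{lemma:x-lambda-convergence}, and conclude using the initialization $y_0^{(1)}=x^0$, $\lambda_0^{(1)}=\lambda^0$. One small slip in your closing bookkeeping: the boundary clump produces a $\|x^T-x^{T-1}\|^2$ term and is absorbed by the $t=T$ quadratic via~\eqref{cond:a4}, not by the $t=1$ quadratic; the interior clump at index $t+1$ produces $\|x^{t}-x^{t-1}\|^2$ and uses the $t$th quadratic for $t=1,\dots,T-1$ via~\eqref{cond:a3}, so every summand is used exactly once as you intended.
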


\paragraph{Proof of Proposition~\ref{lemma:h-smooth-lambda-iterate-bound}.}
  First, we claim the proposed stepsizes in~\eqref{eq:outerloop-stepsizes} and~\eqref{eq:innerloop-stepsizes} satisfy the necessary conditions \labelcref{cond:a1}-\labelcref{cond:c3} for the preceding proposition and lemmas to apply. Each of these conditions can be directly checked: See \citep[Theorem 5]{zhang2022solving} for equivalent verifications in the simplified setting of constrained optimization, only differing in that we consider a generic $C$ rather than fixing $C = \frac{\|\lambda^\star\|+r}{\|x^0-x^\star\|}$ in our choice of $\beta^{(t)}=C\Tilde{M}_t$.

  Note that from \cref{prop:Q-convergence-bound}, the assumption that $L_h \in (0,\infty]$, and the fact that $\tau_1=0$,  \begin{equation}\label{eq:simplified-Q-bound}
      \sum_{t=1}^T \omega_t\hQ(z^t,z) + \frac{\omega_T\eta_T}{2}\|x^T-x\|^2 \leq \frac{\Tilde{\omega}^{(1)}\beta^{(1)}+\omega_1\eta_1}{2}\|x^0-x\|^2+\frac{\Tilde{\omega}^{(1)}\gamma^{(1)}}{2}\|\lambda^0-\lambda\|^2 \ .
  \end{equation}  Furthermore, since this holds for all $z \in \X \times \Lambda_r \times V$, we consider the saddle point $z^\star$. As a saddle point, $Q(z^t,z^\star) \geq 0$. Using the stepsize conditions \labelcref{eq:outerloop-stepsizes} and \labelcref{eq:innerloop-stepsizes}, \begin{equation}\label{eq:UFCM-stepsize-t=1}
         \omega_1=1, \quad \Tilde{\omega}^{(1)}=\frac{1}{S_1}, \quad \eta_1=2L_{\eps,r}^{\mathtt{ADA}}, \quad
        \beta^{(1)} = \frac{CS_1}{\Delta}, \quad \gamma^{(1)}=\frac{\Tilde{M}_1^2}{\beta^{(1)}}=\frac{S_1}{C\Delta} \ ,
    \end{equation} gives the claimed bound on $x^T$.
  Therefore, for $t \geq 1$, each $x^t$ lies in the desired bounded neighborhood around $x^\star$. Note that $\underline{x}^t \in \conv{x^0, \dots, x^{t-1}}$, so $\underline{x}^t$ is in the same neighborhood. As a result, $M_t = \|\nabla g(\underline{x}^t)\|$ is bounded uniformly by $M$. 
  
  For the dual iterates, \cref{prop:Q-convergence-bound} and the nonnegative of $Q(z^t; z^\star)$ and the norm ensures

 $$\sum_{t=1}^T\sum_{s=1}^{S_t}\frac{\omega_t}{S_t}\muhstar\|\lambda_s^{(t)}-\lambda^\star\|^2 \leq \frac{\Tilde{\omega}^{(1)}\beta^{(1)}+\omega_1\eta_1}{2}\|x^0-x^\star\|^2+\frac{\Tilde{\omega}^{(1)}\gamma^{(1)}}{2}\|\lambda^0-\lambda^\star\|^2 \ .$$
    For any $s, t \geq 1$, one can bound $$\|\lambda_{s}^{(t)}-\lambda^\star\|^2 \leq \frac{2L_hS_t}{\omega_t }\left[\frac{\Tilde{\omega}^{(1)}\beta^{(1)}+\omega_1\eta_1}{2}\|x^0-x^\star\|^2+\frac{\Tilde{\omega}^{(1)}\gamma^{(1)}}{2}\|\lambda^0-\lambda^\star\|^2\right] \ .$$   
    Utilizing the values in \labelcref{eq:UFCM-stepsize-t=1}, bounding $M_t$ {\color{blue} above} by $M$, {\color{blue} $S_t/\omega_t$  above by $M\Delta + 1 $ for $t \geq 1$}, and noting $\Tilde \lambda^t$ lies in the convex hull of $\{\lambda_s^{(t)}\}$ yields the claimed bound on $\lambda^t$.

\paragraph{Proof of \cref{thm:smooth-composite-convergence}.} Let $\bar{z}^T=(\bar{x}^T; \bar{\lambda}^T, \bar{\nu}^T)$ where
        \begin{equation}\label{eq:averaging-scheme} \bar{x}^T = \sum_{t=1}^T \omega_t x_t/\sum_{t=1}^T \omega_t \quad
     \bar{\lambda}^T = \sum_{t=1}^T \omega_t \Tilde{\lambda}^t/\sum_{t=1}^T \omega_t,  \quad
        \bar{\nu}_j^T = \begin{cases}
            \nabla g_j(x^0) & \text{if } \Tilde{\lambda}_j^t=0 \text{ for all } t,\\
            \sum_{t=1}^T \omega_t\Tilde{\lambda}_j^t \nu_j^t/\sum_{t=1}^T \omega_t\Tilde{\lambda}_j^t & \text{otherwise.}
        \end{cases} \end{equation}
        Consequently, 
    \begin{align*}
        \sum_{t=1}^T \omega_t \hLcal(x; \Tilde{\lambda}^t, \nu^t) &= \sum_{t=1}^T \omega_t \left[u(x)+\inner{\Tilde{\lambda}^t}{ \nu^t x-g^*(\nu^t)}-h^*(\Tilde \lambda^t)\right]\\
        &= \sum_{t=1}^T \omega_t u(x)+\sum_{t=1}^T \omega_t \sum_{j=1}^m \Tilde{\lambda}_j^t(\inner{\nu_j^t}{x}-g_j^*(\nu_j^t))-\sum_{t=1}^T \omega_t h^*(\Tilde \lambda^t)\\
        &\leq \left(\sum_{t=1}^T \omega_t\right) u(x)+ \left(\sum_{t=1}^T \omega_t\right)\left[ \sum_{j=1}^m \bar{\lambda}_j^T \left(\inner{\bar{\nu}_j^T}{x}-g_j^*(\bar{\nu}_j^T)\right)- h^*(\bar \lambda^T)\right]\\&= \left(\sum_{t=1}^T \omega_t\right)\hLcal(x; \bar{\lambda}^T, \bar{\nu}^T) \ ,
    \end{align*}
where the inequality follows from Jensen's inequality. Similarly, Jensen's inequality ensures that $$\left(\sum_{t=1}^T \omega_t\right) \hLcal(\bar{x}^T; \lambda, \nu) \leq \sum_{t=1}^T \omega_t \hLcal(x^t; \lambda, \nu) \ .$$
    Therefore, we have for all $z \in \Z$
    \begin{align}\label{eq:Gap-Jensen's}
    \begin{split}
        \left(\sum_{t=1}^T \omega_t\right) \hQ(\bar{z}^T, z) &= \left(\sum_{t=1}^T \omega_t\right)\left[\hLcal(\bar{x}^T;\lambda,\nu) -\hLcal(x;\bar{\lambda}^T, \bar{\nu}^T )\right]\\ &\leq \sum_{t=1}^T \omega_t \left[ \hLcal(x^t; \lambda, \nu)-\hLcal(x; \Tilde{\lambda}^t, \nu^t)\right]=\sum_{t=1}^T \omega_t \hQ(z^t,z) \ .
        \end{split}
    \end{align}
    Utilizing the above inequality, the bound demonstrated in \labelcref{eq:simplified-Q-bound}, the nonnegativity of the norm, our distance bounds, as well as the triangle inequality,

    \begin{equation*}\label{eq:Q-convergence-unsimplified}
        \left(\sum_{t=1}^T \omega_t\right)\hQ(\bar{z}^T,(x^\star;\lambda,\nu)) \leq \frac{\Tilde{\omega}^{(1)}\beta^{(1)}+\omega_1\eta_1}{2}D_x^2+ {\Tilde{\omega}^{(1)}\gamma^{(1)}}(D_\lambda^2+r^2), \text{ for all } \lambda \in \Lambda_r, \ \nu \in V  \ .
    \end{equation*}

   Finally, we bound $\sum_{t=1}^T \omega_t$ by $T^2/2$ and substitute the values from \labelcref{eq:UFCM-stepsize-t=1} into the above expression. Considering \cref{lemma:Q-small->eps-opt}, it suffices to bound the above by $\eps$. Since each outer loop of UFCM computes only one gradient of $g_j$, $\Neps = T$ where $T > 0 $ solves $$\frac{(C/\Delta+2\Lada) D_x^2 + {\color{blue}2}/(C\Delta)(D_\lambda^2+r^2)}{T^2} = \eps \ ,$$resulting in the complexity bound for $\Neps$.
  Noting that each inner loop performs only one proximal step on $u$ and $h^*$,  \begin{align}\label{eq:general-P_eps}
        \Peps &\leq \sum_{t=1}^{\lceil\Neps \rceil} S_t= \sum_{t=1}^{\lceil\Neps \rceil} \left\lceil M_t \Delta t \right\rceil\leq \sum_{t=1}^{\lceil\Neps \rceil} (1+M\Delta t)\leq (\lceil\Neps \rceil + {\lceil\Neps \rceil^2} M \Delta) \ .
    \end{align}

   \section{Compositions with Heterogeneously H\"older Smooth Components}\label{section:Holder-Compositions}
Our convergence theory for problems with smooth components developed so far extends to instances where each $g_j$ has H\"older continuous gradient with individual exponents. Recall, we say a function $f$ is $(L,p)$-\Holder smooth with $L \geq 0$ and $p \in [0,1]$ if the function satisfies \begin{equation*}
    \|\nabla f(x) -\nabla f(y)\| \leq L\|x-y\|^p, \ \forall x,y \in \dom{f} \ . 
\end{equation*}
When $p=1$, we recover standard $L$-smoothness, and when $p=0$, the function $f$ is Lipschitz.  Therefore, \Holder smoothness lets one interpolate between smooth and nonsmooth functions.

\subsection{A Universal Definition of the Approximate Dualized Aggregate Smoothness}

The key result facilitating the design of methods universally applicable to H\"older smooth problems is proven by~\citep[Lemma 1]{Nesterov_2014}, previously introduced here as Lemma~\ref{lemma:Nesterov-universal-quad}. We further utilize the subsequent cocoercive extension, introduced here as \cref{lemma:cocoercive-Holder}, which was proven by~\citep[Lemma 1]{li2024simpleuniformlyoptimalmethod}. These results show, for any fixed tolerance $\delta > 0$, there exists a constant $L_\delta = \left[\frac{1-p}{1+p}\frac{1}{\delta}\right]^{\frac{1-p}{1+p}}L^{\frac{2}{1+p}}$ such that the standard quadratic upper bound inequality or cocoercivity inequality of smooth convex functions hold (up to $\delta$) for any $(L,p)$-\Holder smooth function.
Supposing each $g_j$ is $(L_j,p_j)$-\Holder smooth, define a general smoothness constant for fixed tolerance $\delta > 0$ as \begin{equation}\label{eq:general-heterogeneous-smoothness-constant}
    L_{\delta,r} := \sum_{j=1}^m \left[\left[\frac{1-p_j}{1+p_j}\cdot\frac{m}{\delta}\right]^\frac{1-p_j}{1+p_j}\left[(\lambda_j^\star+r) \cdot L_j\right]^\frac{2}{1+p_j}\right] \ .
\end{equation}
Noting each $\lambda_j g_j$ is $(\lambda_jL_j,p_j)$-H\"older smooth, this constant is large enough to ensure that
Lemmas~\ref{lemma:Nesterov-universal-quad} and~\ref{lemma:cocoercive-Holder} apply to each component with tolerance $\delta/m$. Summing these $m$ components, the ideal dualized problem~\eqref{eq:dualized-ideal-problem} arising if one knew the optimal dual multipliers is approximated within tolerance $\delta$. We utilize this general constant to motivate our unifying theory.

Our universal definition for the Approximate Dualized Aggregate smoothness constant $\Lada$, generalizing the smooth case previously defined in~\eqref{eq:Lada-definition}, then follows from careful selection of this $\delta$ tolerance. To achieve optimal convergence guarantees, we require the following implicit choice for the definition of $\delta$. Given an initialization $D_x \geq \|x^0-x^\star\|$ and choices of $\epsilon,r >0 $, we define the Approximate Dualized Aggregate smoothness constant as the unique positive root to the following equation
\begin{equation} \label{eq:Lada-definition-general}
   \Lada := \left\{L^\mathtt{ADA} > 0 : L^\mathtt{ADA} = \sum_{j=1}^m \left[\frac{1-p_j}{1+p_j}\cdot \frac{m\sqrt{L^\mathtt{ADA}}}{\eps}\cdot \frac{2\sqrt{6}D_x}{\sqrt{\eps}}\right]^\frac{1-p_j}{1+p_j}\left[(\lambda_j^\star+r) L_j\right]^\frac{2}{1+p_j}\right\} \ .
\end{equation}
 We note that this value is precisely $L_{\delta,r}$ \labelcref{eq:general-heterogeneous-smoothness-constant} with specialized $\delta = \eps/\sqrt{24\Lada D_x^2/\eps}$. As each $p_j$ tends to one, the associated coefficient tends to one, becoming independent on $\eps$. As all $p_j $ tend to one, the above sum defining $\Lada$ tends to $ \sum_{j=1}^m (\lambda_j^\star+r)L_j$, recovering our previous definition \labelcref{eq:Lada-definition} as a special case. 

 \begin{lemma}\label{lemma:Lada-nonincreasing}
     The Approximate Dualized Aggregate smoothness constant $\Lada$ as defined in \labelcref{eq:Lada-definition-general} is nonincreasing with respect to $\eps$.
 \end{lemma}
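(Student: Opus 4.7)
The plan is to recast the implicit definition as a fixed-point equation and exploit the structural bound $p_j \in [0,1]$. Setting $\alpha_j := \frac{1-p_j}{2(1+p_j)} \in [0, \tfrac{1}{2}]$ and absorbing every factor independent of $\Lada$ and $\eps$ into a coefficient $C_j \geq 0$, the defining equation \eqref{eq:Lada-definition-general} reads $L = F(L, \eps)$ with
$$F(L, \eps) := \sum_{j=1}^m C_j \, L^{\alpha_j} \, \eps^{-3\alpha_j}.$$
Observe that $F$ is nondecreasing and concave in $L$ (each $\alpha_j < 1$) and nonincreasing in $\eps$. The concavity together with sublinear growth at infinity ensures that the promised unique positive root exists and is well-defined — this is the one piece of background bookkeeping the argument needs.

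The key step is the following bound at any positive fixed point $L = F(L, \eps)$:
$$\frac{\partial F}{\partial L}(L, \eps) = \frac{1}{L}\sum_{j=1}^m \alpha_j C_j L^{\alpha_j} \eps^{-3\alpha_j} \leq \frac{1}{2L}\sum_{j=1}^m C_j L^{\alpha_j} \eps^{-3\alpha_j} = \frac{1}{2},$$
using $\alpha_j \leq \tfrac{1}{2}$ in the inequality and the fixed-point identity in the final equality. Meanwhile, direct term-by-term differentiation gives $\partial F/\partial \eps \leq 0$ with no additional effort.

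I would finish by applying the implicit function theorem to $G(L, \eps) := L - F(L, \eps)$: at the root, $\partial G/\partial L = 1 - \partial F/\partial L \geq \tfrac{1}{2} > 0$, so $\Lada(\eps)$ is differentiable in $\eps$ and
$$\frac{d\Lada}{d\eps} = \frac{\partial F/\partial \eps}{1 - \partial F/\partial L} \leq 0,$$
yielding the claim. A purely algebraic alternative avoiding IFT: for $\eps_1 < \eps_2$, monotonicity of $F$ in $\eps$ gives $F(\Lada(\eps_2), \eps_1) \geq F(\Lada(\eps_2), \eps_2) = \Lada(\eps_2)$, so $\phi(L) := F(L,\eps_1) - L$ satisfies $\phi(\Lada(\eps_2)) \geq 0 = \phi(\Lada(\eps_1))$; since $\phi$ is strictly decreasing on $[\Lada(\eps_1), \infty)$ (because $F'(L) \leq \tfrac{1}{2}$ there, as $F'$ is itself decreasing in $L$), this forces $\Lada(\eps_2) \leq \Lada(\eps_1)$. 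The main (and only real) obstacle is the well-posedness of the implicit definition; the monotonicity itself is a short consequence of the $\alpha_j \leq \tfrac{1}{2}$ bound.
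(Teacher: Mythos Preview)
Your argument is correct, but it takes a different and somewhat heavier route than the paper. You keep the defining relation in fixed-point form $L=F(L,\eps)$, bound $\partial_L F\le 1/2$ at the fixed point via $\alpha_j\le 1/2$, and invoke either the implicit function theorem or a contradiction argument on $\phi(L)=F(L,\eps_1)-L$. The paper instead divides the defining equation through by $L$, obtaining
\[
\sum_{j=1}^m C_j\, L^{-\frac{1+3p_j}{2(1+p_j)}}\,\eps^{-\frac{3(1-p_j)}{2(1+p_j)}}=1,
\]
so that the left side is strictly decreasing in $L$ and nonincreasing in $\eps$ separately. Monotonicity in both variables then gives the conclusion in one line: increasing $\eps$ lowers the sum, and restoring it to $1$ forces $L$ to decrease. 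This avoids any derivative estimate or IFT machinery and only uses $\alpha_j<1$ (equivalently that the new exponent on $L$ is negative), not the sharper $\alpha_j\le 1/2$. Your approach is more systematic and would transfer to implicit definitions where such a clean monotone rearrangement is unavailable; the paper's trick is the more economical one here.
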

\begin{proof}
     Consider $\eps' \geq \eps > 0$. Rearranging the definitions of $\Lada$ and $L_{\eps',r}^\mathtt{ADA}$ ensure that $$\sum_{j=1}^m C_j (\Lada)^{-\frac{1+3p_j}{2(1+p_j)}}\eps^{-\frac{3-3p_j}{2+2p_j}} = 1, \quad \sum_{j=1}^m C_j (L_{\eps',r}^\mathtt{ADA})^{-\frac{1+3p_j}{2(1+p_j)}}(\eps')^{-\frac{3-3p_j}{2+2p_j}} = 1\ ,$$
  with $C_j=\left[\frac{1-p_j}{1+p_j}\cdot 2\sqrt{6}mD_x\right]^{\frac{1-p_j}{1+p_j}}\left[(\lambda_j^\star+r)L_j\right]^\frac{2}{1+p_j}$. Since each $p_j \in [0,1]$, it follows that $(\eps')^{-\frac{3-3p_j}{2+2p_j}} \leq (\eps)^{-\frac{3-3p_j}{2+2p_j}}$, and for the above sums to equal $1$, it must hold that the positive solution $L_{\eps',r}^\mathtt{ADA} \leq \Lada$.
\end{proof}
\subsection{Guarantees for Composite Optimization with Heterogeneous Components}
Importantly, note that we are not making any modifications to UFCM in this section. The algorithm does not require knowledge of the implicitly defined $\delta$ value or any $(L_j,p_j)$ pairs. They are for analysis only. Instead, the UFCM algorithm only relies on an estimate of $\Lada$, which could be guessed via a geometric parameter schedule without attempting to approximate $\delta$ or any of the $(L_j,p_j)$ pairs. {\color{blue} (See Remark~\ref{rmk:tradeoff-lada-dist}.)}

Next, we present our convergence theory, further justifying the choice of the implicit constant definition~\eqref{eq:Lada-definition-general}. Our theory provides guarantees for any choice of $\delta$ and approximate smoothness constant $L_{\delta,r}$. However, this choice $\Lada$ optimizes the strength of our guarantee over all $\delta$.
{\color{blue} We first generalize \cref{lemma:h-smooth-lambda-iterate-bound}, ensuring the iterates of UFCM stay bounded in this heterogeneously smooth setting, accounting for a slightly larger radius due to the additive error term. We defer the proof to \cref{section:appendix-heterogeneous-comp-proofs}.

\begin{proposition}\label{prop:compactness of iterates general}
      Consider any problem of the form~\labelcref{eq:WLOG-main-problem} and constants $\Delta, C, \epsilon, r > 0$, and suppose \Cref{alg:FCM} is run with outer loop stepsizes~\labelcref{eq:outerloop-stepsizes}
    and inner loop stepsizes~\labelcref{eq:innerloop-stepsizes}, then for all $t \leq \sqrt{\frac{24\Lada D_x^2}{\eps}}$,
    \begin{equation}\label{eq:primal-iterate-bound-general}
        \|x^t-x^\star\|^2 \leq \frac{1}{2L_{\eps,r}^{\mathtt{ADA}}}\left[(C/\Delta+50L_{\eps,r}^{\mathtt{ADA}})D_x^2+\frac{1}{C \Delta}D_\lambda^2\right]  \ .
    \end{equation} 
    Furthermore, 
    if $h$ is $L_h$-smooth, then for averaged iterates $\Tilde \lambda^{t}$ computed each loop, \begin{equation}\label{eq:dual-iterate-bound-general}
        \|\Tilde\lambda^t-\lambda^\star\|^2 \leq  L_h (M\Delta+1)\left[(C/\Delta + 50 L_{\eps,r}^{\mathtt{ADA}})D_x^2+\frac{1}{C\Delta} D_\lambda^2\right] \ .
    \end{equation}
    where $M$ is an upper bound for $\|\nabla g(x)\|$ in the neighborhood outlined above \labelcref{eq:primal-iterate-bound-general}.
\end{proposition}}

\begin{theorem}\label{thm:heterogeneous-composite-convergence}
    Consider any problem of the form~\labelcref{eq:WLOG-main-problem}  with each $g_j$ being $(L_j,p_j)$-H\"older smooth,  and constants $\Delta, C, \epsilon, r > 0$. Then \Cref{alg:FCM} with stepsizes \labelcref{eq:outerloop-stepsizes} and \labelcref{eq:innerloop-stepsizes} must find an $(\eps,r)$-optimal solution~\labelcref{def:optimality-conditions} with  complexity bounds 
  {\color{blue}  \begin{equation}\label{eq:Heterogeneous-complexity-bounds}
        \Neps = \sqrt{\frac{(2C/\Delta+4L_{\eps,r}^{\mathtt{ADA}})D_x^2+4/(C\Delta)(D_\lambda^2+r^2)}{\eps}}, \quad \Peps = \lceil\Neps \rceil+\lceil\Neps \rceil^2\Delta M ,
    \end{equation}} where $M$ is an upper bound for $\|\nabla g(x)\|$ in the neighborhood outlined in \labelcref{eq:primal-iterate-bound-general}.  
    
For target accuracy $0 < \eps \leq \min\{1, 24\Lada D_x^2\}$ with $r=D_\lambda \sqrt{\eps}$,  and setting $C=D_\lambda/D_x$ and $\Delta = C/(2\Lada)$,  these bounds simplify to
    \begin{equation}\label{eq:Heterogeneous-complexity-bounds-simpler}
        \Neps = \sqrt{\frac{24\Lada D_x^2}{\eps}}, \quad \Peps = \sqrt{\frac{24\Lada D_x^2}{\eps}} + \frac{48MD_xD_\lambda}{\eps} + 1 \ ,
    \end{equation} where $M$ is an upper bound on $\|\nabla g(x)\|$ for all $x \in B(x^\star,\sqrt{{\color{blue}27}D_x^2}).$  
\end{theorem}

The preceding theorem demonstrates how the complicated nature of heterogeneous optimization can be simplified to look analogous to the standard accelerated rate of unconstrained smooth optimization. Of course, in general, this rate is not  ${\bigO}(1/\sqrt{\eps})$ as $\Lada$ may be non-constant in $\eps$. The aggregating parameter $\Lada$ provides the key mechanism to provide a single unifying, universal guarantee.

Further, $\Lada$ serves as a universal tool to recover optimal {\color{blue} rates in terms of gradient oracle complexity} for known problem classes. First note that \cref{thm:heterogeneous-composite-convergence} recovers the optimal  rates from smooth compositions from \cref{section:Smooth-Composite} within a factor of two as our more general definition of $\Lada$ reduces to the previous definition~\eqref{eq:Lada-definition} when $p_j=1$. The following corollaries demonstrate $\Lada$'s ability to recover optimal results {\color{blue} in terms of gradient oracle complexity} from the literature for minimizing a single \Holder smooth function~\citep{Nemirovski_restarting}, rates for a heterogeneous sum of \Holder smooth terms~\citep{grimmer2023optimal}, and rates for smooth constrained optimizations~\citep{zhang2022solving}.

\begin{corollary}\label{cor:Lada-recovers-HS}
    Consider minimizing $g_0(x) + u(x)$ where $g_0$ is $(L,p)$-\Holder smooth with initial distance bound $D_x \geq \|x^0-x^\star\|$, initialization $\lambda^0= 1$, and target accuracy $0 < \eps \leq \min\{1,2\sqrt{6}LD_x^{1+p}\}$. Then \Cref{alg:FCM} finds an $\eps$-optimal solution with complexity bounds $$\Neps = \Peps = {\bigO}\left(\left(\frac{L}{\eps}\right)^\frac{2}{1+3p}D_x^{\frac{2+2p}{1+3p}}\right) \ .$$
\end{corollary}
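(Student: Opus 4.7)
The plan is to instantiate \cref{thm:heterogeneous-composite-convergence} with the single-component setting $m=1$, $g_1 = g_0$, and $h(z)=z$ (the identity on $\mathbb{R}$), so that~\eqref{eq:WLOG-main-problem} reduces exactly to $\min_{x\in\X} g_0(x) + u(x)$. The only verifications needed are that the hypotheses of \cref{thm:heterogeneous-composite-convergence} hold and that the resulting complexity expressions in~\eqref{eq:Heterogeneous-complexity-bounds} collapse to the claimed form after substituting the formula for $\Lada$.

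First I would observe that for $h(z)=z$, we have $h^*(\lambda)=0$ if $\lambda=1$ and $+\infty$ otherwise, so $\Lambda = \dom{h^*} = \{1\}$. Consequently every saddle point has $\lambda^\star = 1$, and because the initialization is $\lambda^0 = 1$, the initial dual distance satisfies $D_\lambda=0$. The theorem's choice $r = D_\lambda\sqrt{\eps}=0$ therefore collapses $\Lambda_r$ to $\{1\}$, and in particular $\lambda_1^\star + r = 1$ everywhere it appears.

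Next I would specialize the implicit definition~\eqref{eq:Lada-definition-general} to obtain
\begin{equation*}
    \Lada = \left[\frac{1-p}{1+p}\cdot\frac{\sqrt{\Lada}}{\eps}\cdot\frac{4D_x}{\sqrt{\eps}}\right]^{\frac{1-p}{1+p}} L^{\frac{2}{1+p}}.
\end{equation*}
Writing $\alpha := \frac{1-p}{1+p}$ so that $1+\alpha = \frac{2}{1+p}$ and isolating the powers of $\Lada$, this rearranges to $\Lada^{1-\alpha/2} = c_1(p)\,L^{1+\alpha}\,D_x^{\alpha}\,\eps^{-3\alpha/2}$ for a constant $c_1(p)$ depending only on $p$. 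Since $1-\alpha/2 = \frac{1+3p}{2(1+p)}$, raising to the reciprocal power yields the explicit closed form
\begin{equation*}
    \Lada = c_2(p)\cdot L^{\frac{4}{1+3p}}\, D_x^{\frac{2(1-p)}{1+3p}}\, \eps^{-\frac{3(1-p)}{1+3p}}.
\end{equation*}

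Finally I would substitute this into the bounds of \cref{thm:heterogeneous-composite-convergence}. The gradient complexity is $\Neps = \sqrt{16\,\Lada\, D_x^2/\eps}$, and because $D_\lambda = 0$ the middle term in $\Peps$ vanishes, giving $\Peps = \Neps + 1$. Collecting the exponents of $L$, $D_x$, and $\eps$ from the formula for $\Lada$ produces the stated rate $\mathcal{O}\bigl((L/\eps)^{2/(1+3p)} D_x^{(2+2p)/(1+3p)}\bigr)$, recovering Nemirovski's optimal complexity for \Holder smooth minimization. The main obstacle is purely algebraic exponent bookkeeping when solving the implicit equation for $\Lada$ and then combining powers; once that is done, applicability of \cref{thm:heterogeneous-composite-convergence} is immediate since its hypothesis $\eps < \min\{1,16\Lada D_x^2\}$ is exactly what the corollary assumes.
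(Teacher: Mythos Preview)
Your proposal is correct and follows essentially the same route as the paper: specialize to $h(z)=z$ so that $\lambda^\star=\lambda^0=1$, solve the implicit equation~\eqref{eq:Lada-definition-general} in closed form, and plug into \cref{thm:heterogeneous-composite-convergence}. The one small difference is in handling the dual side: you set $D_\lambda=0$ exactly and read off $\Peps=\Neps+1$ from~\eqref{eq:Heterogeneous-complexity-bounds}, whereas the paper keeps $D_\lambda$ arbitrarily small but positive (so the stepsize $\gamma^{(t)}=\tilde M_t/C$ remains well-defined), observes $S_t=\lceil M_t\Delta t\rceil=1$ for all relevant $t$, and concludes $\Peps=\Neps$ directly---both arguments land on the same asymptotic bound.
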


\begin{proof}
    For minimizing a single function, $h(z)=z$, and $\lambda^0=\lambda^\star = 1$. Therefore, $D_\lambda$ {\color{blue} and $r$ }can be arbitrarily small, as well as $\Delta=D_\lambda/D_x$. Recall $S_t = \lceil M_t \Delta t\rceil$, so we set $S_t = 1$ for all $t$, and each inner loop of UFCM only computes a single proximal step on $u$ and $h^*$. Therefore, $\Neps = \Peps$.
    
    We now focus on the gradient oracle complexity. It is straightforward to check that our hypothesis enforces $\eps \leq 2\sqrt{6}LD_x^{1+p} \leq L\left(\frac{1-p}{1+p}\right)^\frac{1-p}{2}(2\sqrt{6}D_x)^{1+p}  \leq  24\Lada D_x^2$, which allows us to apply \cref{thm:heterogeneous-composite-convergence}. Considering our Approximate Dualized Aggregate constant yields $$\Lada = (1+r)^\frac{4}{1+3p}\left[\frac{1-p}{1+p}\cdot \frac{2\sqrt{6}D_x}{\eps\sqrt{\eps}}\right]^\frac{2-2p}{1+3p} L^\frac{4}{1+3p} \ .$$ We then conclude 
    \begin{equation*}
        \Neps = {\bigO}\left(\sqrt{\frac{\Lada D_x^2}{\eps}}\right) = {\bigO}\left(\sqrt{\left( \frac{D_x}{\eps\sqrt{\eps}}\right)^\frac{2-2p}{1+3p}\frac{ L^\frac{4}{1+3p} D_x^2}{\eps}}\right) ={\bigO}\left(\left(\frac{L}{\eps}\right)^\frac{2}{1+3p}D_x^{\frac{2+2p}{1+3p}}\right)  
    \end{equation*}
    where the first equality considers \labelcref{eq:Heterogeneous-complexity-bounds-simpler}, the second substitutes $\Lada$, and the third simplifies to recover \labelcref{eq:optimal-holderSmooth-rate}.
\end{proof}

\begin{corollary}\label{cor:Lada-general-implicit-formula}
   In the setting of \cref{thm:heterogeneous-composite-convergence}, \Cref{alg:FCM} finds an $\eps$-optimal solution with complexity bounds $\Neps $ and $\Peps = \lceil\Neps\rceil+\lceil\Neps\rceil^2M\Delta$ for $\Neps := T > 0$ which solves the following
   \begin{equation}\label{eq:implicit-HS-solve}
          \sum_{j=1}^m c_j \frac{((\lambda_j^\star+r){L}_j)^{\frac{2}{1+p_j}}D_x^2}{\eps^{\frac{2}{1+p_j}}}T^{-\frac{1+3p_j}{1+p_j}} = 1 \ ,
   \end{equation} 
    with $c_j = 24\left[\frac{1-p_j}{1+p_j}m\right]^\frac{1-p_j}{1+p_j}$.
    Consequently, the convergence rate is at most the sum of the rates of individual terms~\labelcref{eq:optimal-holderSmooth-rate}, weighted by the appropriate multiplier, \begin{equation}\label{eq:HS-general-sum-of-rates}
        \Neps = {\bigO}\left(\sum_{j=1}^m K_{SM}(\eps,(\lambda_j^\star+r)L_j,p_j,D_x)\right) \ .
    \end{equation}
\end{corollary}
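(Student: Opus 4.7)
The plan is to carry out a direct algebraic manipulation: substitute the closed-form bound $\Neps = \sqrt{16\Lada D_x^2/\eps}$ from \cref{thm:heterogeneous-composite-convergence} back into the implicit definition of $\Lada$ in \labelcref{eq:Lada-definition-general}, rearrange, and recognize the result as the implicit equation~\labelcref{eq:implicit-HS-solve}. Specifically, setting $T = \Neps$ gives $\Lada = T^2\eps/(16 D_x^2)$ and hence $\sqrt{\Lada} = T\sqrt{\eps}/(4 D_x)$. With this substitution, the argument inside the bracketed factor of \labelcref{eq:Lada-definition-general} simplifies cleanly:
$$\frac{1-p_j}{1+p_j}\cdot\frac{m\sqrt{\Lada}}{\eps}\cdot\frac{4 D_x}{\sqrt{\eps}} \ = \ \frac{(1-p_j)\,m\,T}{(1+p_j)\,\eps}.$$

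Substituting this back into the identity defining $\Lada$ yields
$$\frac{T^2\eps}{16 D_x^2} \ = \ \sum_{j=1}^m \left[\frac{(1-p_j)m}{1+p_j}\right]^{\frac{1-p_j}{1+p_j}}\left(\frac{T}{\eps}\right)^{\frac{1-p_j}{1+p_j}}\big[(\lambda_j^\star+r) L_j\big]^{\frac{2}{1+p_j}}.$$
Dividing through by $T^2\eps/(16 D_x^2)$ and collecting powers --- using $\frac{1-p_j}{1+p_j}-2 = -\frac{1+3p_j}{1+p_j}$ and $-\frac{1-p_j}{1+p_j}-1 = -\frac{2}{1+p_j}$ --- produces an equation of precisely the form~\labelcref{eq:implicit-HS-solve}, with the constants $c_j$ matching the stated expression up to a universal multiplicative factor. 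The claimed formula $\Peps = \lceil\Neps\rceil + \lceil\Neps\rceil^2 M\Delta$ then follows immediately from the expression for $\Peps$ in \cref{thm:heterogeneous-composite-convergence} applied at $\Neps = T$.

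For the second conclusion~\labelcref{eq:HS-general-sum-of-rates}, I would exploit that the left-hand side of~\labelcref{eq:implicit-HS-solve} is strictly decreasing in $T$, so it suffices to exhibit any $T$ proportional to $\sum_j K_{SM}(\eps,(\lambda_j^\star+r)L_j,p_j,D_x)$ for which the left-hand side is at most $1$; the true root $\Neps$ will then be majorized by this $T$. Using the definition of $K_{SM}$ from~\labelcref{eq:optimal-holderSmooth-rate}, a direct computation shows that if $T \geq C\,m\,K_{SM}(\eps,(\lambda_j^\star+r)L_j,p_j,D_x)$ for an absolute constant $C$ large enough (depending polynomially on $c_j$), then the $j$-th summand of the left-hand side is at most $1/m$, so taking $T = C\,m\sum_{j=1}^m K_{SM}(\eps,(\lambda_j^\star+r)L_j,p_j,D_x)$ bounds the entire sum by $1$. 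This yields $\Neps \leq T$, which is the stated $\mathcal{O}$-bound.

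The only mild bookkeeping subtlety is in this last step: one must carefully invert the exponent $-(1+3p_j)/(1+p_j)$ in the implicit equation and verify that the $c_j^{(1+p_j)/(1+3p_j)}$ prefactors coming out of this inversion, which grow only polynomially in $m$, are absorbed into the $\mathcal{O}$ notation. No new analytic ingredients beyond algebraic substitution and monotonicity of the left-hand side in $T$ are needed, since the hard work of converting the implicit structure into a convergence rate has already been done by \cref{thm:heterogeneous-composite-convergence}.
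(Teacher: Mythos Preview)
Your proposal is correct and follows essentially the same route as the paper: substitute $\Lada = T^2\eps/(16D_x^2)$ into the implicit definition~\labelcref{eq:Lada-definition-general}, rearrange to obtain~\labelcref{eq:implicit-HS-solve}, and then use monotonicity in $T$ together with the per-term bound $\leq 1/m$ to majorize the root by $\sum_j K_{SM}$. The paper phrases the last step as solving each summand equal to $1/m$ for $T_j$ and bounding $T \leq \max_j T_j \leq \sum_j T_j$, which is the same argument you give; your hedging ``up to a universal multiplicative factor'' on the constants $c_j$ is also appropriate, since the derivation in fact yields $16$ rather than the stated $4$.
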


\begin{proof}
    Considering the gradient oracle complexity bound $\Neps = \sqrt{24\Lada D_x^2/\eps}$, we substitute $\Lada = \Neps^2 \cdot \eps/(24D_x^2)$ into the definition \labelcref{eq:Lada-definition-general} giving
    $$    \frac{T^2\eps}{24D_x^2}= \sum_{j=1}^m \left[\frac{1-p_j}{1+p_j}\cdot \frac{2\sqrt{6}m\sqrt{ \frac{T^2\eps}{24D_x^2}D_x^2}}{\eps^{3/2}}\right]^\frac{1-p_j}{1+p_j}\left((\lambda_j^\star+r) L_j\right)^\frac{2}{1+p_j} \ .$$
    Rearranging the expression above yields \labelcref{eq:implicit-HS-solve}, which is nonincreasing in $T$. Therefore, to prove~\eqref{eq:HS-general-sum-of-rates}, it suffices to bound each summand of \labelcref{eq:implicit-HS-solve} by $1/m$. We then consider solving $$ c_j \frac{\left((\lambda_j^\star+r)L_j\right)^\frac{2}{1+p_j}D_x^2}{\eps^{\frac{2}{1+p_j}}}T_j^{-\frac{1+3p_j}{1+p_j}}=\frac{1}{m} $$  for $T_j$. The result recovers \labelcref{eq:optimal-holderSmooth-rate} component-wise $$T_j = {\bigO}\left(\left(\frac{(\lambda_j^\star+r)L_j}{\eps}\right)^\frac{2}{1+3p_j}D_x^{\frac{2+2p_j}{1+3p_j}}\right) = {\bigO}\left(K_{SM}(\eps,(\lambda_j^\star+r)L_j,p_j,D_x)\right) . $$ Bounding $T  \leq \max_j T_j \leq \sum_{j=1}^m T_j$, yields \labelcref{eq:HS-general-sum-of-rates}.
\end{proof}

Fixing $h(z) = \sum_{j=1}^m z_j$, each $\lambda_j^\star=1$, and this second corollary recovers the results for heterogeneous sums of \Holder smooth terms of~\citep[Theorem 1.1 and 1.3]{grimmer2023optimal} when we initialize $\lambda_j^0=1$. For constrained optimization, with $h$ as the nonpositive indicator function and when each $g_j$ is $(L_j,p)$-\Holder smooth with common exponent, this second corollary recovers the ${\bigO}(1/\eps^{2/(1+3p)})$ results of~\citep[Corollary 2.3]{deng2024uniformlyoptimalparameterfreefirstorder} as a special case.

{\color{blue} \begin{remark}\label{rmk:tradeoff-lada-dist}
   We note that one may tradeoff knowledge of $\Lada$ for knowledge of distance bounds $D_x$ and $D_\lambda$. Considering the sequence $L_k = 2^0, 2^1, 2^2, ...$, as well as the setting of \cref{thm:heterogeneous-composite-convergence} one may run UFCM for $N_k = \sqrt{\frac{24 L_k D_x^2}{\eps}}$ outer loop iterations. Our theory then guarantees that once $k=\max\{\lceil\log_2(\Lada)\rceil,0\}$, an $\eps$-optimal solution has been constructed, taking at most $\bigO\left(\sqrt{\frac{\Lada D_x^2}{\eps}}\right)$ gradient oracle calls total. Note that neither UFCM nor this modified scheme possesses stopping criteria certifying that an $\epsilon$-optimal solution has been found without additional problem knowledge. So these guarantees are theoretical. 
\end{remark}}

\subsection{Analysis of UFCM for Compositions with Heterogeneous Components} 
The same process of analysis presented in Section~\ref{section:Smooth-Composite} extends to provide guarantees for UFCM given any heterogeneously H\"older smooth components by carefully accounting for the additive errors incurred by using Nesterov-style inequalities. Lemma~\ref{lemma:zhang-smoothness-growth-lemma}, Lemma~\ref{lemma:nu-convergence-bound}, and Proposition~\ref{prop:Q-convergence-bound} generalize to this setting as follows. For many of these results, the proof is redundant with prior work except for tracking an additional constant term through the developed inequalities. Below, we present these key results with proofs of Lemmas~\ref{lemma:Holder-smoothness-growth} and~\ref{lemma:nu-convergence-Holder} deferred to \cref{section:appendix-heterogeneous-comp-proofs} for the sake of completeness.

\begin{lemma}\label{lemma:Holder-smoothness-growth}
    If each $g_j$ is $(L_j,p_j)$-\Holder smooth, then for any fixed $\delta > 0$, \begin{align*}
        \inner{\lambda}{U_{g^*}(\nu;\hat{\nu})} \geq \frac{1}{2L_{\delta,r}}\left\|\sum_{j=1}^m \lambda_j(\nu_j-\hat{\nu_j})\right\|^2-\frac{\delta}{2}, \ \forall \lambda \in \Lambda_r, \ \forall \nu, \hat{\nu} \in \{\nabla g(x) : x \in \X\}  \ .      
    \end{align*}
   
\end{lemma}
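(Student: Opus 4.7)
The strategy is a Hölder-smooth generalization of Lemma~\ref{lemma:zhang-smoothness-growth-lemma}. The three ingredients are (a) the Fenchel duality identity $U_{f^*}(\nabla f(x);\nabla f(y)) = U_f(y;x)$, converting a Bregman divergence of the conjugate into one of $f$ itself; (b) the cocoercive Hölder bound of Lemma~\ref{lemma:cocoercive-Holder}, which is the whole point of this generalization; and (c) a weighted Cauchy--Schwarz step to aggregate per-component estimates into a single bound on $\|\sum_j \lambda_j(\nu_j - \hat\nu_j)\|^2$.

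Write $\nu = \nabla g(x)$ and $\hat\nu = \nabla g(\hat x)$ for some $x,\hat x\in\X$, as permitted by the hypothesis on $\nu,\hat\nu$. Fenchel's equality $g_j^*(\nu_j) = \inner{\nu_j}{x}-g_j(x)$ together with $\nabla g_j^*(\hat\nu_j) = \hat x$ yields the identity $U_{g_j^*}(\nu_j;\hat\nu_j) = U_{g_j}(\hat x; x)$ by direct expansion. For per-component tolerances $\delta_j$ to be chosen below, Lemma~\ref{lemma:cocoercive-Holder} applied to each $(L_j,p_j)$-Hölder smooth $g_j$ then gives
\[U_{g_j}(\hat x; x) \geq \frac{1}{2(L_j)_{\delta_j}}\|\nu_j - \hat\nu_j\|^2 - \frac{\delta_j}{2}, \qquad (L_j)_{\delta_j}:=\bigl[\tfrac{1-p_j}{1+p_j}\tfrac{1}{\delta_j}\bigr]^{(1-p_j)/(1+p_j)}L_j^{2/(1+p_j)}.\]
Multiplying by $\lambda_j\geq 0$, summing over $j$, and applying a weighted Cauchy--Schwarz inequality (with weights $\lambda_j(L_j)_{\delta_j}$) to the resulting right-hand-side sum $\sum_j \tfrac{\lambda_j}{(L_j)_{\delta_j}}\|\nu_j-\hat\nu_j\|^2$ yields
\[\inner{\lambda}{U_{g^*}(\nu;\hat\nu)} \geq \frac{1}{2\sum_j \lambda_j(L_j)_{\delta_j}}\Bigl\|\sum_j \lambda_j(\nu_j-\hat\nu_j)\Bigr\|^2 - \frac{1}{2}\sum_j \lambda_j\delta_j.\]

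It remains to pick $\delta_j$ so that $\sum_j \lambda_j(L_j)_{\delta_j}\leq L_{\delta,r}$ and $\sum_j \lambda_j\delta_j \leq \delta$ simultaneously. Since $\lambda\in\Lambda_r$ and $\lambda_j\geq 0$ imply $\lambda_j \leq \lambda_j^\star + r$, the choice $\delta_j := \delta/(m(\lambda_j^\star+r))$ suffices for both: for the error term, $\sum_j \lambda_j\delta_j \leq \sum_j(\lambda_j^\star+r)\cdot \delta/(m(\lambda_j^\star+r)) = \delta$; for the aggregated smoothness, a direct algebraic check gives $\lambda_j(L_j)_{\delta_j} \leq \bigl[\tfrac{1-p_j}{1+p_j}\tfrac{m}{\delta}\bigr]^{(1-p_j)/(1+p_j)}\bigl[(\lambda_j^\star+r)L_j\bigr]^{2/(1+p_j)}$, whose sum over $j$ is exactly $L_{\delta,r}$. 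The main subtlety---and essentially the only nontrivial step---is precisely this calibration of $\delta_j$: $L_{\delta,r}$ carries weight $(\lambda_j^\star+r)^{2/(1+p_j)}$ on $L_j$, whereas Cauchy--Schwarz only naturally produces the linear weight $\lambda_j\leq \lambda_j^\star + r$, so the missing factor $(\lambda_j^\star+r)^{(1-p_j)/(1+p_j)}$ must be supplied by the $\delta_j^{-(1-p_j)/(1+p_j)}$ term inside $(L_j)_{\delta_j}$, which forces the scaling $\delta_j \propto 1/(\lambda_j^\star+r)$, with the factor $m$ keeping the total error budget within $\delta$.
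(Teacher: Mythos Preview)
Your proof is correct, but it takes a different route from the paper's. The paper aggregates first: it forms $\bar g=\sum_j\lambda_j g_j$, uses $\inner{\lambda}{U_{g^*}(\nu;\hat\nu)}=U_{\bar g}(\hat x;x)$, and then applies the approximate cocoercivity bound directly to $\bar g$ with a single constant $L_\lambda=\sum_j[\tfrac{1-p_j}{1+p_j}\tfrac{m}{\delta}]^{(1-p_j)/(1+p_j)}(\lambda_j L_j)^{2/(1+p_j)}$, finally bounding $L_\lambda\le L_{\delta,r}$ via $\lambda_j\le\lambda_j^\star+r$. No Cauchy--Schwarz step and no per-component tolerance calibration are needed, because the norm $\|\sum_j\lambda_j(\nu_j-\hat\nu_j)\|^2$ appears directly as $\|\nabla\bar g(x)-\nabla\bar g(\hat x)\|^2$. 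The price is that Lemma~\ref{lemma:cocoercive-Holder} is literally stated for a single $(L,p)$-H\"older smooth function, whereas $\bar g$ is a heterogeneous sum; the paper handles this (as noted after~\eqref{eq:general-heterogeneous-smoothness-constant}) by first summing the approximate quadratic upper bounds of Lemma~\ref{lemma:Nesterov-universal-quad} for each $\lambda_j g_j$ with tolerance $\delta/m$, which is all the proof of Lemma~\ref{lemma:cocoercive-Holder} actually uses.

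Your approach instead applies Lemma~\ref{lemma:cocoercive-Holder} only in its stated per-component form and then recombines via weighted Cauchy--Schwarz, with the $\delta_j=\delta/(m(\lambda_j^\star+r))$ calibration recovering $L_{\delta,r}$ exactly. This is slightly longer but more self-contained: it never invokes cocoercivity for a function that is not itself H\"older smooth with a single exponent. Either argument lands on the same constant.
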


The following results utilize the general smoothness constant $L_{\delta,r}$ in both the analysis and the appropriate parameters for completeness. However, we recall that the Approximate Aggregate Smoothness constant $\Lada$ used in \cref{thm:heterogeneous-composite-convergence} is specialized with $\delta = \eps/\sqrt{24\Lada D_x^2/\eps}$. 
\begin{lemma}\label{lemma:nu-convergence-Holder} 
Suppose the stepsizes satisfy~\eqref{cond:a1}-\eqref{cond:c3}, and let $z^t := (x^t; \Tilde{\lambda}^t, \nu^t)$ denote the iterates of Algorithm 1. Then $z^t$ satisfy the following for any $z=(x;\lambda,\nu) \in \X \times \Lambda_r \times V$ and any fixed $\delta > 0$
\begin{align*}
\begin{split}        
            \sum_{t=1}^T \omega_t\left[\hQ_\nu(z^t,z)\right] &\leq \frac{\omega_T L_{\delta,r}}{2(\tau_T+1)}\|x^T-x^{T-1}\|^2+\sum_{t=1}^{T-1}\frac{\omega_t\theta_{t+1}L_{\delta,r}}{2\tau_{t+1}}\|x^t-x^{t-1}\|^2\\&\hspace{0cm}+\omega_1\tau_1\inner{\lambda}{U_{g^*}(\nu;\nu^0)}+\frac{\delta}{2}\left[\omega_T(\tau_T+1)+\sum_{t=2}^T \omega_t\tau_t\right], \quad \forall (x;\lambda,\nu) \in \X \times \Lambda_r \times V \ .
\end{split}
\end{align*}
\end{lemma}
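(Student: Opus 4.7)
} My plan is to reduce this to the combination of Lemma~\ref{lemma:nu-convergence-bound} (which is purely algorithmic and does not rely on smoothness of the components) with the H\"older-adjusted growth inequality of Lemma~\ref{lemma:Holder-smoothness-growth}, patched together by Young's inequality. The starting observation is that the derivation of Lemma~\ref{lemma:nu-convergence-bound} only uses convexity of each $g_j$ and the optimality conditions of the $\nu$-update, so its bound applies verbatim in the H\"older setting:
\begin{align*}
  \sum_{t=1}^T \omega_t \hQ_\nu(z^t,z)
  &\leq -\omega_T(\tau_T+1)\langle\lambda,U_{g^*}(\nu;\nu^T)\rangle + \omega_T\langle\textstyle\sum_j\lambda_j(\nu_j-\nu_j^T),\,x^T-x^{T-1}\rangle \\
  &\quad - \sum_{t=2}^T\Big[\omega_t\tau_t\langle\lambda,U_{g^*}(\nu^t;\nu^{t-1})\rangle - \omega_{t-1}\langle\textstyle\sum_j\lambda_j(\nu_j^{t-1}-\nu_j^t),\,x^{t-1}-x^{t-2}\rangle\Big] \\
  &\quad + \omega_1\tau_1\langle\lambda,U_{g^*}(\nu;\nu^0)\rangle .
\end{align*}

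Next I would bound each $-\langle\lambda,U_{g^*}(\cdot;\cdot)\rangle$ term via Lemma~\ref{lemma:Holder-smoothness-growth}, which replaces the exact quadratic lower bound used in the smooth case by the same quadratic minus an additive $\delta/2$. Concretely, $-\omega_T(\tau_T+1)\langle\lambda,U_{g^*}(\nu;\nu^T)\rangle \leq -\tfrac{\omega_T(\tau_T+1)}{2L_{\delta,r}}\|\sum_j\lambda_j(\nu_j-\nu_j^T)\|^2 + \tfrac{\omega_T(\tau_T+1)\delta}{2}$, and analogously for each interior-index term with slack $\tfrac{\omega_t\tau_t\delta}{2}$. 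I would then apply Young's inequality $\langle a,b\rangle \leq \tfrac{1}{2c}\|a\|^2 + \tfrac{c}{2}\|b\|^2$ to the cross inner products, choosing the parameter $c$ in each instance precisely to cancel the negative quadratic dual term produced above. For the trailing $T$-indexed pair, taking $c = L_{\delta,r}/(\tau_T+1)$ applied to $\omega_T\langle a,b\rangle$ with $a=\sum_j\lambda_j(\nu_j-\nu_j^T)$ and $b = x^T-x^{T-1}$ exactly matches the dual coefficient and leaves the primal residue $\tfrac{\omega_T L_{\delta,r}}{2(\tau_T+1)}\|x^T-x^{T-1}\|^2$ claimed in the lemma. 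For each interior $t\in\{2,\dots,T\}$, a parallel choice with $c_t = \omega_{t-1}L_{\delta,r}/(\omega_t\tau_t) = \theta_tL_{\delta,r}/\tau_t$ cancels the $\|\sum_j\lambda_j(\nu_j^t-\nu_j^{t-1})\|^2$ term and produces a primal residue on $\|x^{t-1}-x^{t-2}\|^2$ that, after the standard re-index $s=t-1$, yields the interior sum of the stated form.

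Finally, all of the $\delta/2$ additive slacks accumulate into the claimed $\tfrac{\delta}{2}\bigl[\omega_T(\tau_T+1)+\sum_{t=2}^T\omega_t\tau_t\bigr]$ term, and the $\omega_1\tau_1\langle\lambda,U_{g^*}(\nu;\nu^0)\rangle$ term is simply carried over unchanged (since Lemma~\ref{lemma:Holder-smoothness-growth} is only applied to terms of the form $\langle\lambda,U_{g^*}(\nu^t;\nu^{t-1})\rangle$ that pair with cross terms, not to the initial $t=1$ term).

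The main obstacle is purely bookkeeping: choosing the Young's coefficient $c_t$ so that its dual-side weight exactly matches the quadratic coefficient produced by Lemma~\ref{lemma:Holder-smoothness-growth}, and then aligning indices so that the residual primal-norm coefficients take the form advertised in the lemma. Once those calibrations are made, the structure of the proof mirrors the smooth-case argument of Lemma~\ref{lemma:nu-convergence-bound}, with the only substantive new contribution being the $\delta/2$ slack injected at each application of Lemma~\ref{lemma:Holder-smoothness-growth}.
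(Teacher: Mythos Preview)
Your proposal is correct and follows essentially the same route as the paper: start from the purely algorithmic bound of Lemma~\ref{lemma:nu-convergence-bound}, replace each Bregman term $\langle\lambda,U_{g^*}(\cdot;\cdot)\rangle$ by the H\"older-adjusted quadratic lower bound of Lemma~\ref{lemma:Holder-smoothness-growth} (incurring the $\delta/2$ slacks), and then apply Young's inequality with exactly the coefficients you identify to cancel the negative dual quadratics. The paper's proof is the same argument, phrased as ``analogous to~\eqref{eq:Q_nu-cocoersive_bregman_bound_appendix} with the above substitution'' followed by Young's inequality; your explicit calibration of $c_t=\theta_tL_{\delta,r}/\tau_t$ is precisely what the paper leaves implicit.
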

The following proposition is the direct analog of \cref{prop:Q-convergence-bound} in the heterogeneously smooth setting. The proof is analogous, applying the above two lemmas instead of Lemmas~\ref{lemma:zhang-smoothness-growth-lemma} and~\ref{lemma:nu-convergence-bound}, noting the small additive dependence on tolerance $\delta$.
\begin{proposition}\label{prop:general-Holder-Q-bound} Consider any problem of the form \labelcref{eq:WLOG-main-problem} with stepsizes satisfying \labelcref{cond:a1}-\labelcref{cond:c3}. Then for any $z = \displaystyle(x; \lambda, \nu) \in \X \times \Lambda_r \times V$ and any fixed $\delta > 0$, \begin{align*}
   &\sum_{t=1}^T \omega_t\hQ(z^t,z){+\color{blue}\sum_{t=1}^T\sum_{s=1}^{S_t}\frac{\omega_t}{S_t}\muhstar\|\lambda_s^{(t)}-\lambda\|^2}+\frac{\omega_T\eta_T}{2}\|x^T-x\|^2\\ &\leq \frac{\Tilde{\omega}^{(1)}\beta^{(1)}+\omega_1\eta_1}{2}\|x^0-x\|^2+\frac{\Tilde{\omega}^{(1)}\gamma^{(1)}}{2}\|\lambda^0-\lambda\|^2 \hspace{0cm}+\omega_1\tau_1\inner{\lambda}{U_{g^*}(\nu;\nu^0)}\\&\hspace{1cm}+\frac{\delta}{2}\left[\omega_T(\tau_T+1)+\sum_{t=2}^T \omega_t\tau_t\right], \quad \forall (x;\lambda,\nu) \in \X \times \Lambda_r \times V  \ .
\end{align*}
\end{proposition}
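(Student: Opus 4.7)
The plan is to mirror the proof of Proposition~\ref{prop:Q-convergence-bound} from the smooth setting, substituting Lemma~\ref{lemma:nu-convergence-Holder} for Lemma~\ref{lemma:nu-convergence-bound} and carefully tracking the additive $\delta$-dependent error arising from H\"older (rather than Lipschitz) smoothness. Starting from the decomposition $\hQ(z^t,z) = \hQ_x(z^t,z) + \hQ_\lambda(z^t,z) + \hQ_\nu(z^t,z)$, I would apply Lemma~\ref{lemma:x-lambda-convergence} to bound the weighted sum of the first two pieces and Lemma~\ref{lemma:nu-convergence-Holder} to bound the weighted sum of the last. Adding these two inequalities produces a single bound on $\sum_{t=1}^T \omega_t \hQ(z^t,z)$ together with assorted leftover potential terms on both sides.

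Next, I would reorganize the combined inequality by absorbing the $\|x^t-x^{t-1}\|^2$ terms on the right-hand side of Lemma~\ref{lemma:nu-convergence-Holder} into the matching $\sum_{t=1}^T \frac{\omega_t\eta_t}{2}\|x^t-x^{t-1}\|^2$ terms on the left-hand side of Lemma~\ref{lemma:x-lambda-convergence}. Conditions~\labelcref{cond:a3} and~\labelcref{cond:a4} are engineered precisely so that this cancellation goes through with $L_{\delta,r}$ in place of $\Lada$ (the algorithmic parameters in this section are specialized to the H\"older setting), exactly as in the proof of Proposition~\ref{prop:Q-convergence-bound}. I would then drop the nonnegative cumulative term $\sum_{t,s}\frac{\omega_t}{S_t}\muhstar\|\lambda_s^{(t)}-\lambda\|^2$ from the left-hand side (legitimate since $\muhstar \geq 0$, with the convention $\muhstar = 0$ whenever $h$ is nonsmooth), leaving only the $\frac{\omega_T\eta_T}{2}\|x^T-x\|^2$ potential. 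The sole structural difference from the smooth case is the extra additive $\frac{\delta}{2}[\omega_T(\tau_T+1)+\sum_{t=2}^T \omega_t\tau_t]$ contributed by Lemma~\ref{lemma:nu-convergence-Holder}, which survives on the right-hand side alongside the Bregman term $\omega_1\tau_1\inner{\lambda}{U_g(\nu;\nu^0)}$ and the initial-distance terms $\frac{\Tilde{\omega}^{(1)}\beta^{(1)}+\omega_1\eta_1}{2}\|x^0-x\|^2$ and $\frac{\Tilde{\omega}^{(1)}\gamma^{(1)}}{2}\|\lambda^0-\lambda\|^2$ inherited from Lemma~\ref{lemma:x-lambda-convergence}.

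The main obstacle, such as it is, is purely the careful index bookkeeping in the $\|x^t-x^{t-1}\|^2$ cancellation, which is identical to the work already done for Proposition~\ref{prop:Q-convergence-bound}. Once this alignment is verified, no new technical machinery beyond the universal H\"older estimates of Lemmas~\ref{lemma:Nesterov-universal-quad} and~\ref{lemma:cocoercive-Holder} (already packaged into Lemma~\ref{lemma:nu-convergence-Holder}) is needed; the proof is essentially a one-line modification of the smooth version that additionally tracks an $O(\delta)$ tail term.
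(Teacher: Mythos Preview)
Your proposal is correct and matches the paper's own approach: the paper explicitly states that the proof is analogous to that of Proposition~\ref{prop:Q-convergence-bound}, applying Lemmas~\ref{lemma:Holder-smoothness-growth} and~\ref{lemma:nu-convergence-Holder} in place of Lemmas~\ref{lemma:zhang-smoothness-growth-lemma} and~\ref{lemma:nu-convergence-bound} and tracking the additive $\delta$ term. Your identification of the $\|x^t-x^{t-1}\|^2$ cancellation via conditions~\labelcref{cond:a3}--\labelcref{cond:a4}, the dropping of the nonnegative $\muhstar$ term, and the surviving $\frac{\delta}{2}[\omega_T(\tau_T+1)+\sum_{t=2}^T\omega_t\tau_t]$ contribution is exactly right.
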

\paragraph{Proof of \cref{thm:heterogeneous-composite-convergence}.}
 We first note that as $\omega_t=t$, $\tau_t=\frac{t-1}{2}$, we can rewrite $$\left[\omega_T(\tau_T+1)+\sum_{t=2}^T\omega_t\tau_t\right]=\frac{T^3+3T^2+2T}{6}, \quad \sum_{t=1}^T \omega_t = \frac{T(T+1)}{2}\ .$$ Then for all $\lambda \in \Lambda_r$ and $\nu \in V$,
 considering \cref{prop:general-Holder-Q-bound}, Jensen's inequality \labelcref{eq:Gap-Jensen's}, and the particular stepsizes \labelcref{eq:UFCM-stepsize-t=1}, we can bound \begin{equation}\label{eq:Q-bound-Holder}
     \hQ(\bar{z}^T, (x^\star; \lambda, \nu)) \leq \frac{(C/\Delta+2L_{\delta,r})\|x^0-x^\star\|^2+2/(C\Delta)(\|\lambda^0-\lambda^\star\|^2+r^2)}{T(T+1)}+\frac{\delta (T+2)}{6} \ .
 \end{equation} 
 Recall that $L_{\delta,r}$ depends on the value $\delta$, so we optimize the above bound with respect $\delta$ to achieve a universally optimal rate.
 We fix $T, \eps > 0$.
        
        Setting $\delta=\frac{\eps}{T}$, and bounding $T+2 \leq 3T$, we obtain the following inequality derived from \labelcref{eq:Q-bound-Holder} \begin{equation}\label{eq:Heterogeneous-Q-bound-eps/N}
            \hQ(\bar{z}^T, (x^\star; \lambda, \nu)) \leq \frac{(C/\Delta+2L_{\eps/T,r})\|x^0-x^\star\|^2+2/(C\Delta)(\|\lambda^0-\lambda^\star\|^2+r^2)}{T^2}+\frac{\eps}{2} \ .\end{equation}

            Our choices of $C$, $\Delta$, and $r=D_\lambda\sqrt{\eps}$ simplify the expression as for any $T \geq \Neps = \sqrt{24\Lada D_x^2/\eps}$ one has $Q(\bar{z}^T , (x^\star;\lambda,\nu)) \leq \eps$ over all $\lambda \in \Lambda_r$ and $\nu \in V$. Applying \cref{lemma:Q-small->eps-opt}, this ensures $(\epsilon,r)$-optimality. Furthermore, when $T = \Neps$, then $L_{\eps/\Neps,r}$ precisely recovers the definition of $\Lada$ in \labelcref{eq:Lada-definition-general}.
            
            The claimed proximal step complexity follows from the general formula~\labelcref{eq:general-P_eps}. Since $\eps \leq 24\Lada D_x^2$, it holds that $\Neps \geq 1$. Therefore, we bound $\lceil \Neps \rceil \leq \Neps + 1 \leq  2\Neps$, which results in $$\Peps = \lceil \Neps \rceil + \lceil \Neps \rceil^2\Delta M \leq \sqrt{\frac{24\Lada D_x^2}{\eps}}+\frac{48MD_xD_\lambda}{\eps} + 1 \ ,$$
            where we recall $\Delta = D_\lambda/(2D_x\Lada)$.

\section{Growth Bounds and Restarting}\label{section:UC-restarting-comp}
We utilize a simple restarting scheme given initial distance bounds. Primal-dual algorithms have exhibited great success from restarting when the respective gap function possesses certain growth conditions~\citep{lu_LP_sharp,lu2024practicaloptimalfirstordermethod,Fercoq_restarting}. This algorithm, denoted R-UFCM can then achieve linear convergence in terms of gradient oracle calls when the components are smooth and strongly convex, and the proximal step complexity can achieve linear convergence rates when $h$ is sufficiently smooth. Recall from \labelcref{eq:intro-UC-def} that a function $f$ is $(\mu,q)$-uniformly convex if $$f(y) \geq f(x)+\inner{\nabla f(x)}{y-x}+\frac{\mu}{q+1}\|y-x\|^{q+1}, \quad\forall x,y \in \dom{f} \ .$$ 

 When $q=1$, we recover the notion of $\mu$-strong convexity. As $q \to \infty$, these functions are simply convex. Similarly to \Holder smoothness, we can interpolate between the level of convexity. If $g_j$ is also $(L_j, p_j)$-\Holder smooth, then the following symmetric, two-sided bound holds $$ g_j(x)+\inner{\nabla g_j(x)}{y-x}+\frac{\mu_j}{q_j+1}\|y-x\|^{q_j+1} \leq g_j(y) \leq  g_j(x)+\inner{\nabla g_j(x)}{y-x}+\frac{L_{j}}{p_j+1}\|y-x\|^{p_j+1}\ .$$

\subsection{Growth Structure}
The uniform convexity of each $g_j$ can be combined together to ensure a growth condition on the gap function. This perspective plays a central role in our analysis, as it does in most restarted analyses.
\begin{definition}\label{def:duality-gap_growth}
Given monotone nondecreasing, convex functions  $G_x, G_\lambda \colon \R_+ \to \R_+$, we say that the gap function possesses $(G_x, G_\lambda)$-growth if for any $z = (x; \lambda,\nu)$ and $\hat{z}=(x^\star;\lambda^\star, \nabla g(x))$ $$G_x(\|x-x^\star\|)+G_\lambda(\|\lambda-\lambda^\star\|) \leq Q(z,\hat z) \ .$$
\end{definition} 
As additional structure, the growth functions considered herein will always have $G_x(0)=0$, $G_\lambda(0)=0$, and both $G_x$ and $G_\lambda$ differentiable. The following lemma gives the explicit growth condition when the component functions $g_j$ exhibit varying uniform convexity and $h$ is $L_h$-smooth.

    \begin{lemma}\label{lemma:Hetero-growth-x} 
        Suppose component functions $g_j$ are $(\mu_j,q_j)$-uniformly convex and $h$ is $L_h$-smooth. Then the gap function possesses $G_x,G_\lambda$ growth where $G_x(t) = \sum_{j=1}^m \lambda_j^\star\frac{\mu_j}{q_j+1}|t|^{(q_j+1)}$ and $G_\lambda(t)=\frac{1}{2L_h}t^{2}$. Therefore, for any $z=(x,\lambda,\nu)$ and $\hat{z}=(x^\star,\lambda^\star,\nabla g(x))$ $$Q(z,\hat z) \geq \sum_{j=1}^m \lambda_j^\star \frac{\mu_j}{q_j+1}\|x-x^\star\|^{q_j+1} + \frac{1}{2L_h}\|\lambda-\lambda^\star\|^{2}\ .$$
    \end{lemma}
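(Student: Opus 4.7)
The plan is to unpack $Q(z,\hat z)$ into primal and dual pieces, then apply uniform convexity of each $g_j$ on one side and strong convexity of $h^*$ (the dual of $L_h$-smoothness of $h$) on the other, stitching them together via the KKT conditions satisfied at the saddle point $(x^\star,\lambda^\star,\nu^\star)$ with $\nu^\star = \nabla g(x^\star)$.

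First I would simplify the gap. By definition, $Q(z,\hat z) = \mathcal{L}(x;\lambda^\star, \nabla g(x)) - \mathcal{L}(x^\star; \lambda, \nu)$. Fenchel–Young equality at $\hat\nu = \nabla g(x)$ gives $\hat\nu x - g^*(\hat\nu) = g(x)$, so the first Lagrangian equals $\langle \lambda^\star, g(x)\rangle - h^*(\lambda^\star) + u(x)$. For the second, Fenchel–Young yields $\nu_j x^\star - g_j^*(\nu_j) \leq g_j(x^\star)$ componentwise, and since $\lambda \in \Lambda \subseteq \mathbb{R}_+^m$ by Lemma~\ref{lemma:lambda nonneg}, we may drop the $\nu$ dependence to obtain
\begin{equation*}
    Q(z,\hat z) \geq \bigl[\langle \lambda^\star, g(x)\rangle + u(x)\bigr] - \bigl[\langle \lambda, g(x^\star)\rangle + u(x^\star)\bigr] + h^*(\lambda) - h^*(\lambda^\star).
\end{equation*}

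Next I would handle the primal growth. Uniform convexity of each $g_j$ gives the pointwise lower bound $g_j(x) \geq g_j(x^\star) + \langle \nabla g_j(x^\star), x-x^\star\rangle + \tfrac{\mu_j}{q_j+1}\|x-x^\star\|^{q_j+1}$. Multiplying by $\lambda_j^\star \geq 0$ and summing, then using the stationarity condition $-\sum_j \lambda_j^\star \nabla g_j(x^\star) \in \partial u(x^\star)$ (i.e., $0 \in \partial_x \mathcal{L}$ at the saddle point) together with convexity of $u$ to cancel the linear terms, yields
\begin{equation*}
    \langle \lambda^\star, g(x)\rangle + u(x) - \langle \lambda^\star, g(x^\star)\rangle - u(x^\star) \geq \sum_{j=1}^m \lambda_j^\star \tfrac{\mu_j}{q_j+1}\|x-x^\star\|^{q_j+1}.
\end{equation*}

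Then for the dual growth I would use that $L_h$-smoothness of $h$ is equivalent to $\tfrac{1}{L_h}$-strong convexity of $h^*$, so $h^*(\lambda) - h^*(\lambda^\star) - \langle \nabla h^*(\lambda^\star), \lambda - \lambda^\star\rangle \geq \tfrac{1}{2L_h}\|\lambda-\lambda^\star\|^2$. The dual KKT condition at the saddle point is $g(x^\star) = \nabla h^*(\lambda^\star)$ (from $\lambda^\star \in \partial h(g(x^\star))$), so rewriting gives
\begin{equation*}
    h^*(\lambda) - h^*(\lambda^\star) - \langle \lambda, g(x^\star)\rangle + \langle \lambda^\star, g(x^\star)\rangle \geq \tfrac{1}{2L_h}\|\lambda - \lambda^\star\|^2.
\end{equation*}
Adding this to the primal bound and telescoping against the preliminary bound on $Q(z,\hat z)$ recovers the claimed growth.

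I don't anticipate any serious obstacle; the one point that needs a little care is the step that drops the $\nu$ variable from the second Lagrangian (using $\lambda \geq 0$ and Fenchel–Young), and the bookkeeping that cancels the cross term $\langle \lambda^\star, g(x^\star)\rangle$ after the dual and primal bounds are added. Everything else is a direct application of uniform convexity, smoothness–strong-convexity duality, and saddle-point stationarity.
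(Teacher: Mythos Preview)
Your proposal is correct and follows essentially the same route as the paper: both reduce $Q(z,\hat z)$ via Fenchel--Young (equality at $\hat\nu=\nabla g(x)$, inequality to drop the generic $\nu$ using $\lambda\ge 0$), then split into a primal piece handled by uniform convexity of each $g_j$ plus the first-order optimality condition at $x^\star$, and a dual piece handled by $1/L_h$-strong convexity of $h^*$ together with $g(x^\star)\in\partial h^*(\lambda^\star)$. The only cosmetic difference is that the paper inserts $\pm\,\mathcal{L}(x^\star;\lambda^\star,\nu^\star)$ to effect the split while you insert $\pm\,\langle\lambda^\star,g(x^\star)\rangle$; one small refinement is that the stationarity condition should be stated as the variational inequality over $\mathcal{X}$ (or with the normal cone $N_{\mathcal X}(x^\star)$) rather than an exact inclusion in $\partial u(x^\star)$, but this only strengthens the inequality you need.
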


    \begin{proof}
From the optimality of $x^\star$, $\inner{\nabla u(x^\star)+\sum_{j=1}^m \lambda_j^\star \nabla g_j(x^\star)}{x-x^\star} \geq 0$. Note that since $h$ is $L_h$-smooth, $h^*$ is $1/L_h$-strongly convex. Thus, 
        \begin{align*}
            Q(z,\hat{z})  &= \Lcal(x; \lambda^\star,  \nabla g(x))-\Lcal(x^\star; \lambda^\star,  \nu^\star)+\Lcal(x^\star; \lambda^\star, \nu^\star)-\Lcal(x^\star; \lambda,  \nu)\\
            &\geq u(x)+\inner{\lambda^\star}{g(x)}-h^*(\lambda^\star)-[u(x^\star)+\inner{\lambda^\star}{g(x^\star)}-h^*(\lambda^\star)]\\&\hspace{0.8cm}+\inner{\lambda^\star}{\nu^\star x^\star-g^*(\nu^\star)}-h^*(\lambda^\star)-[\inner{\lambda}{\nu x^\star-g^*(\nu)}-h^*(\lambda)]\\
            &\hspace{0.8cm}-\inner{\nabla u(x^\star)+\sum_{j=1}^m \lambda_j^\star \nabla g_j(x^\star)}{x-x^\star}\\
            &\geq \left[u({x})-u(x^\star)-\inner{\nabla u(x^\star)}{x-x^\star}\right]+\sum_{j=1}^m \lambda_j^\star\left[g(x)-g(x^\star)-\inner{\nabla g(x^\star)}{x-x^\star}\right]\\
            &\hspace{0.8cm} +h^*(\lambda)-h^*(\lambda^\star)-\inner{\lambda-\lambda^\star}{g(x^\star)}\\
            &\geq \sum_{j=1}^m \lambda_j^\star \frac{\mu_j}{q_j+1}\|x-x^\star\|^{q_j+1} +\frac{1}{2L_h}\|\lambda-\lambda^\star\|^{2}\ ,
        \end{align*}
        where the first equality expands the gap function, the following inequality applies Fenchel-Young and subtracts the nonnegative inner product outlined above, the next inequality regroups terms and applies Fenchel-Young once again, and the final inequality comes directly from the convexity of $u$, the uniform convexity of $g_j$, and the strong convexity of $h^*$.
    \end{proof}

    \subsection{An Approximate Dualized Aggregate  Convexity \texorpdfstring{$\muada$}{Lg}}\label{subsec:muada}
 We now have the necessary tools to define the lower bounding curvature for the composite problem into a single value $\muada$, generalizing the growth bound strong convexity yields. For $(\mu_j,q_j)$-uniformly convex components $g_j$ and target accuracy $\eps > 0$, we define the Approximate Dualized Aggregate convexity constant implicitly as the unique positive solution to the following equation
\begin{equation}\label{eq:muada-def-general}
    {\muada} := \left\{\mu^{\mathtt{ADA}} > 0 : \frac{\mu^{\mathtt{ADA}}}{2} = \sum_{j=1}^m \lambda^\star_j\frac{\mu_j}{q_j+1}(\eps/\mu^{\mathtt{ADA}})^\frac{q_j-1}{2}\right\} \ .
\end{equation}  
Note when $q_j=1$, the coefficient becomes independent of $\eps$. If all $q_j = 1$, the $\muada$ simply totals the $\lambda_j^\star$-weighted strong convexity constants. More generally, $\muada$ aggregates the lower curvature of each component, weighted by the appropriate dual multiplier. This quantity can further be viewed as an approximation of strong convexity as shown in Lemma~\ref{lemma:UC-quad-lower-bound} below. 
 \begin{lemma}\label{lemma:muada-nondecreasing}
     The Approximate Dualized Aggregate convexity constant $\muada$ as defined in \labelcref{eq:muada-def-general} is nondecreasing with respect to $\eps$.
 \end{lemma}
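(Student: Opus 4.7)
The plan is to view $\muada$ as the unique positive root of a one-parameter family of equations and then run a monotonicity argument essentially identical to the one used for \cref{lemma:Lada-nonincreasing}. Define
\[
F(\mu,\eps) := \sum_{j=1}^m \lambda_j^\star \frac{\mu_j}{q_j+1}\left(\frac{\eps}{\mu}\right)^{\frac{q_j-1}{2}} - \frac{\mu}{2},
\]
so that, by construction, $\mu^{\mathtt{ADA}}_\eps$ is characterized by $F(\mu^{\mathtt{ADA}}_\eps,\eps)=0$. Before invoking monotonicity, I would first check that for each fixed $\eps>0$, the map $\mu \mapsto F(\mu,\eps)$ is continuous on $(0,\infty)$, strictly decreasing (the $-\mu/2$ term is strictly decreasing while the sum is nonincreasing in $\mu$ since $(q_j-1)/2\geq 0$), has nonnegative limit (possibly $+\infty$) as $\mu\to 0^+$, and tends to $-\infty$ as $\mu\to\infty$. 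This ensures the implicit definition of $\muada$ is well-posed as a unique positive root.

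Next I would verify the key monotonicity of $F$ in $\eps$. Because each $q_j\geq 1$, the exponent $(q_j-1)/2$ is nonnegative, so for any fixed $\mu>0$ the map $\eps \mapsto (\eps/\mu)^{(q_j-1)/2}$ is nondecreasing. Weighting by the nonnegative coefficients $\lambda_j^\star\mu_j/(q_j+1)$ and summing, $F(\mu,\cdot)$ is nondecreasing in $\eps$ (and strictly increasing as soon as some $q_j>1$ and $\lambda_j^\star\mu_j>0$).

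The claim then follows by an implicit comparison. Fix $0<\eps_1\leq \eps_2$ and set $\mu_i:=\mu^{\mathtt{ADA}}_{\eps_i}$. Using monotonicity in $\eps$,
\[
0 = F(\mu_1,\eps_1) \leq F(\mu_1,\eps_2) = F(\mu_1,\eps_2) - F(\mu_2,\eps_2),
\]
where the last equality uses $F(\mu_2,\eps_2)=0$. Strict monotonicity of $F(\cdot,\eps_2)$ then forces $\mu_1 \leq \mu_2$, which is precisely the nondecreasing conclusion.

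I expect no substantive obstacle: the argument is a textbook implicit-function-style comparison. The only mild bookkeeping concern is the boundary behavior as $\mu\to 0^+$, which splits into two subcases (all $q_j=1$, giving a finite positive limit $\sum_j \lambda_j^\star \mu_j/2$, versus some $q_j>1$, giving $+\infty$); both yield the sign needed for existence and uniqueness of the root, so the two-variable monotonicity closes the argument uniformly.
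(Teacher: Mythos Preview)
Your proposal is correct and takes essentially the same approach as the paper: both arguments exploit that the defining relation is strictly decreasing in $\mu$ and nondecreasing in $\eps$ (since each $q_j\geq 1$), then compare the roots at two values $\eps\leq\eps'$. The only cosmetic difference is that the paper first divides through to write each defining equation as a sum equal to $1$ before invoking monotonicity, whereas you keep the function $F(\mu,\eps)$ and argue via sign comparison; your additional well-posedness check is a nice bit of extra care the paper leaves implicit.
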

 \begin{proof}
 Consider $\eps' \geq \eps > 0$. Rearranging the definitions of $\muada$ and $\mu_{\eps'}^\mathtt{ADA}$ ensure that $$\sum_{j=1}^m \lambda_j^\star \frac{\mu_j}{q_j+1}(\muada)^{-\frac{q_j+1}{2}}\eps^\frac{q_j-1}{2} = 1, \quad \sum_{j=1}^m \lambda_j^\star \frac{\mu_j}{q_j+1}(\mu_{\eps'}^\mathtt{ADA})^{-\frac{q_j+1}{2}}(\eps')^\frac{q_j-1}{2} =  1 \ .$$
  Since each $q_j \geq 1$, it follows that $(\eps')^{(q_j-1)/2} \geq (\eps)^{(q_j-1)/2}$, and for the above sums to equal one, it must hold that the positive solution $\mu_{\eps'}^\mathtt{ADA} \geq \muada$.
 \end{proof}
 \begin{lemma}\label{lemma:UC-quad-lower-bound}
     Suppose the components $g_j$ are $(\mu_j,q_j)$-uniformly convex and $h$ is $L_h$-smooth. Then  for any $z=(x,\lambda,\nu)$ and $\hat{z}=(x^\star,\lambda^\star,\nabla g(x))$,  $$Q(z,\hat z) \geq \frac{\muada}{2}\|x-x^\star\|^2+\frac{1}{2L_h}\|\lambda-\lambda^\star\|^2-\frac{\eps}{2} \ .$$
 \end{lemma}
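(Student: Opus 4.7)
The plan is to reduce to the growth bound already proven in Lemma~\ref{lemma:Hetero-growth-x}, which gives
\[
Q(z,\hat z) \;\geq\; \sum_{j=1}^m \lambda_j^\star \frac{\mu_j}{q_j+1}\|x-x^\star\|^{q_j+1} \;+\; \frac{1}{2L_h}\|\lambda-\lambda^\star\|^2.
\]
The dual term already matches the claim, so it suffices to show
\[
\sum_{j=1}^m \lambda_j^\star \frac{\mu_j}{q_j+1}\|x-x^\star\|^{q_j+1} \;\geq\; \tfrac{\muada}{2}\|x-x^\star\|^2 \;-\; \tfrac{\eps}{2}.
\]
Writing $t := \|x-x^\star\|^2 \geq 0$, my plan is to split on how $t$ compares to the threshold $\eps/\muada$, which is the natural pivot singled out by the implicit definition \labelcref{eq:muada-def-general}.

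In the regime $t \leq \eps/\muada$, the claimed right-hand side $\muada t/2 - \eps/2$ is nonpositive while the left-hand side is nonnegative, so the inequality is immediate. In the regime $t > \eps/\muada$, I factor $t^{(q_j+1)/2} = t\cdot t^{(q_j-1)/2}$; since $q_j \geq 1$, the exponent $(q_j-1)/2$ is nonnegative and hence $t^{(q_j-1)/2} \geq (\eps/\muada)^{(q_j-1)/2}$. Summing with weights $\lambda_j^\star \mu_j/(q_j+1)$ and invoking the identity $\muada/2 = \sum_j \lambda_j^\star \frac{\mu_j}{q_j+1}(\eps/\muada)^{(q_j-1)/2}$ from \labelcref{eq:muada-def-general} gives
\[
\sum_{j=1}^m \lambda_j^\star \frac{\mu_j}{q_j+1}\, t^{(q_j+1)/2} \;\geq\; t\sum_{j=1}^m \lambda_j^\star \frac{\mu_j}{q_j+1}(\eps/\muada)^{(q_j-1)/2} \;=\; \tfrac{\muada}{2}\, t \;\geq\; \tfrac{\muada}{2}t - \tfrac{\eps}{2},
\]
completing the case analysis and hence the proof.

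There is no real obstacle; the only subtlety is recognizing that the quantity $\eps/\muada$ appearing inside the implicit definition of $\muada$ is precisely the correct case-split threshold that makes both sides of the inequality agree (up to the $\eps/2$ slack) exactly at the boundary. Once the pivot is chosen this way, monotonicity of $s\mapsto s^{(q_j-1)/2}$ on $[0,\infty)$ together with the defining identity of $\muada$ closes everything with one line of algebra, and the approach works uniformly across heterogeneous exponents $q_j\geq 1$ (including the strongly convex case $q_j=1$, where the inequality becomes trivial since the threshold dependence disappears).
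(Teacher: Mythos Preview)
Your proof is correct. Both you and the paper begin by invoking Lemma~\ref{lemma:Hetero-growth-x} to reduce to the scalar inequality on $G_x$, and both split on the same threshold (your condition $t \leq \eps/\muada$ is exactly the paper's condition $G_x(\|x-x^\star\|) < \eps/2$, since the definition of $\muada$ gives $G_x^{-1}(\eps/2) = \sqrt{\eps/\muada}$). In the small case both arguments are identical.

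The difference is in the large case. The paper argues abstractly: it rewrites $\muada = \eps/(G_x^{-1}(\eps/2))^2$, invokes Lemma~\ref{lemma:muada-nondecreasing} to deduce $G_x'(t) \geq \muada\, t$ for $t \geq G_x^{-1}(\eps/2)$, and then integrates this derivative bound. You instead exploit the explicit polynomial form of $G_x$: factoring $t^{(q_j+1)/2} = t\cdot t^{(q_j-1)/2}$ and using monotonicity of $s\mapsto s^{(q_j-1)/2}$ lets you plug the defining identity \labelcref{eq:muada-def-general} in directly, with no calculus and no appeal to Lemma~\ref{lemma:muada-nondecreasing}. Your route is shorter and more elementary for this particular $G_x$; the paper's route is structured to apply to any growth function satisfying the monotonicity of the associated $\muada$, so it trades directness for a bit more generality.
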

 \begin{proof}
 Considering the result of \cref{lemma:Hetero-growth-x}, it suffices to bound $$G_x(\|x-x^\star\|) \geq \frac{\muada}{2}\|x-x^\star\|^2-\frac{\eps}{2}, \quad \forall x \in \X \ .$$
 Note that $\muada = \eps/(G_x^{-1}(\eps/2))^2$. Since $G_x(t)$ is differentiable and positive for all $t > 0$ and  $\muada$ is nondecreasing in $\eps$, it follows that
     $$\frac{\partial \muada}{\partial \eps}=\frac{(G_x^{-1}(\eps/2))^2 - \frac{\eps G_x^{-1}(\eps/2)}{G_x'(G_x^{-1}(\eps/2))}}{(G_x^{-1}(\eps/2))^4} \geq 0 \Longrightarrow G_x'(G_x^{-1}(\eps/2)) \geq \frac{\eps}{G_x^{-1}(\eps/2)}=\muada G_x^{-1}(\eps/2) \ .$$
     Therefore, by the monotonicity and nonnegativity of $G_x$, for any $t \geq G_x^{-1}(\eps/2)$, it holds that $G_x'(t) \geq \muada t$.
     
     We first consider the case where $G_x(\|x-x^\star\|) \geq \eps/2$. We again note that by monotonicity and nonegativity of $G_x$, it holds that for any $x \in \X$, \begin{align*}\label{eq:muada-exact_quad_lower_boud}G_x(\|x-x^*\|) = \int_0^{\|x-x^\star\|} G_x'(t)\mathrm{dt} &\geq \int_{G_x^{-1}(\eps/2)}^{\|x-x^\star\|} \muada t \mathrm{dt}+\int_0^{G_x^{-1}(\eps/2)}G_x'(t)\mathrm{dt} \nonumber \\
     &= \int_{0}^{\|x-x^\star\|} \muada t \mathrm{dt} - \int_0^{G_x^{-1}(\eps/2)} \muada t \mathrm{dt} + \int_0^{G_x^{-1}(\eps/2)}G_x'(t)\mathrm{dt} \nonumber\\
    &=\frac{\muada}{2}\|x-x^\star\|^2 \ .
     \end{align*}  

    Now we consider the case where $G_x(\|x-x^\star\|) < \eps/2$. Since $\muada = \eps/(G_x^{-1}(\eps/2))^2$, we note that $$\frac{\muada}{2}\|x-x^\star\|^2 = \frac{\eps}{2(G_x^{-1}(\eps/2))^2}\|x-x^\star\|^2 < \frac{\eps}{2} \ ,$$
    which implies that 
    \begin{equation*}
    G_x(\|x-x^\star\|) \geq 0 > \frac{\muada}{2}\|x-x^\star\|^2 - \frac{\eps}{2} \ . \qedhere
    \end{equation*}
 \end{proof}
   
\subsection{A Further Universalized Approximate Dualized Aggregate Smoothness}
Recall that the previous definition in \labelcref{eq:Lada-definition-general} for the Approximate Dualized Aggregate smoothness constant $\Lada$ depended on $\eps$ and distance bound $D_x$. In order to recover \labelcref{eq:optimal-holderSmooth-rate} through \cref{thm:heterogeneous-composite-convergence}, this dependence was a necessity. When the components possess uniform convexity in addition to \Holder smoothness, one can further leverage the Approximate Dualized Aggregate Convexity $\muada$. In its full generality, we define $\Lada$ to be the unique positive solution to the following equation
\begin{equation} \label{eq:Lada-definition-fully-general}
   \Lada := \left\{L^\mathtt{ADA} > 0 : L^\mathtt{ADA} = \sum_{j=1}^m \left[\frac{1-p_j}{1+p_j}\cdot \frac{m\sqrt{L^\mathtt{ADA}}}{\eps}\cdot \min\left\{\frac{2\sqrt{6}D_x}{\sqrt{\eps}}, \frac{4\sqrt{6}}{\sqrt{\muada}}\right\}\right]^\frac{1-p_j}{1+p_j}\left[(\lambda_j^\star+r) L_j\right]^\frac{2}{1+p_j}\right\} \ .
\end{equation}
We note that for small enough $\muada$, the above value recovers \labelcref{eq:Lada-definition-general} exactly. Further note that even with this generalization, $\Lada$ remains nonincreasing with respect to $\eps$.

\subsection{Guarantees for Fully Heterogeneous Compositions}
Finally, we present our universal theory when each component $g_j$ possesses its own $(L_j,p_j)$-\Holder smoothness and $(\mu_j,q_j)$-uniform convexity. Algorithmic restarting, as discussed in Section~\ref{subsubsec:restarting}, is the key to enabling this final improvement in our theory.

Our proposed restarted variant, denoted R-UFCM, {\color{blue} sequentially runs $K$ executions of} UFCM, each for $T_k$ iterations, restarted at a sequence of initializations $z^k = (x^k, \lambda^k)$ with distance bounds $D_x^{(k)}$ and $D_\lambda^{(k)}$. Using the produced outputs $\bar{x}^{T_k,k}$ and $\bar{\lambda}^{T_k,k}$, the next initialization $z^{k+1}= (x^{k+1}, \lambda^{k+1})$ is determined. The next primal initialization is $\bar x^{T_k,k}$ if $\muada \geq 4\eps/D_x^2$, else $x^0$ is reused. Similarly, the next dual initialization is $\bar \lambda^{T_k,k}$ if {\color{blue} $2^{K-k}L_h \leq D_\lambda^2/\eps$}, else $\lambda^0$ is reused. \Cref{alg:R-FCM} formalizes this process with the following initializations

\begin{align}\label{eq:R-UFCM-initializations}
   (T_k,D_x^{(0)}) &= \begin{cases}
               \left(\sqrt{\frac{{\color{blue}96} \Lada}{\muada}},\sqrt{\frac{2^{K+1}\eps}{\muada}}\right) & \text{if } \muada \geq \frac{4\eps}{D_x^2}, \\
               \left(\sqrt{\frac{{\color{blue}24}\Lada D_x^2}{2^{K-k-1}\eps}},D_x\right) & \text{otherwise,}
          \end{cases} \quad
   D_\lambda^{(0)} = {\color{blue} \min\left\{D_\lambda, \sqrt{2^{K+1}\eps L_h}\right\} }  
\end{align}
Note that when $\muada \geq 4\eps/D_x^2$, $T_k$ is independent of $k$.

\begin{algorithm}[t]
\textbf{Input}  $z^{0} \in \X \times \Lambda, \text{ distance bounds } D_x \text{ and } D_\lambda$, target accuracy $\eps > 0$, constants $\Lada$ and $\muada$, and UFCM {\color{blue} execution count  $K$}
\begin{algorithmic}[1]
\State Set $D_x^{(0)}$, $D_\lambda^{(0)}$ and $\{T_k\}$ according to \labelcref{eq:R-UFCM-initializations}
\For{$k = 0, 1, \dots, K-1$}
   \State Run UFCM($z^k$, $\lceil T_k \rceil$, $\Lada$) returning output $(\bar{x}^{T_k,k},\bar{\lambda}^{T_k,k})$ 
   \State Set $(x^{k+1}, D_x^{(k+1)})= \begin{cases}
       (\bar{x}^{T_k,k},\sqrt{2^{K-k}\eps/\muada}) &\text{if } \muada \geq 4\eps/D_x^2\\
       (x^0,D_x) & \text{otherwise}
   \end{cases}$
      \State Set $(\lambda^{k+1}, D_\lambda^{(k+1)})= \begin{cases}
   (\bar{\lambda}^{T_k,k},\sqrt{2^{K-k}\eps L_h})&  \text{if }{\color{blue} \sqrt{2^{K-k}\eps L_h}} \leq D_\lambda\\
       (\lambda^0,D_\lambda) & \text{otherwise}
   \end{cases}$
    \State Set $z^{k+1} = (x^{k+1},\lambda^{k+1})$ 
\EndFor
\end{algorithmic}
\caption{Restarted Universal Fast Composite Method (R-UFCM)}\label{alg:R-FCM}
\end{algorithm}

The following theorem, proven in \cref{section:proof-of-RUFCM}, establishes our universal convergence theory. We denote the gradient complexity of this restarted method by $\Neps := \sum_{k=0}^{K-1}\lceil T_k\rceil$ as R-UFCM computes $\lceil T_k\rceil$ gradients of each $g_j$ in execution $k$ of UFCM. Likewise, we denote the proximal complexity by $\Peps := \sum_{k=0}^{K-1} \lceil \Peps^{(k)} \rceil$ where $\lceil \Peps^{(k)}\rceil$ bounds the number of proximal evaluations of $u$ and $h$ used in the $k$th execution of UFCM. 

Furthermore, we restrict $\eps \in (0,1]$ sufficiently small such that $$\sqrt{\frac{24 \Lada D_x^2}{\eps}} \geq 1, \quad \sqrt{\frac{96\Lada}{\muada}} \geq 1 .$$
These restrictions must hold for sufficiently small $\eps$ as $\lim_{\eps \to 0^+} {\Lada}/{\eps} = +\infty$, which holds from \cref{lemma:Lada-nonincreasing}.
Secondly, $\lim_{\eps \to 0^+} {\Lada}/{\muada} \geq \max\left\{\lim_{\eps \to 0^+}{L_{1,r}^\mathtt{ADA}}/{\muada},\lim_{\eps \to 0^+}{\Lada}/{\mu_1^\mathtt{ADA}} \right\} \geq 1$, where the first inequality utilizes Lemmas~\ref{lemma:Lada-nonincreasing} and~\ref{lemma:muada-nondecreasing}, and the second notes that when $p_j=q_j=1$ for all $j$, then $\Lada$ and $\muada$ are constant with respect to $\eps$, so $\Lada \geq \muada$, while if any $p_j < 1$ or $q_j > 1$ then the first or second limit diverge to infinity respectively. 

For notational ease, we let $\Tilde{z}^k = (x^k; \lambda^k, \nabla g(x^k))$, extending each initialization $z^k = (x^k,\lambda^k)$ to include the conjugate variables. {\color{blue} In particular $\Tilde{z}^0 = (x^0; \lambda^0, \nabla g(x^0))$. }
{\color{blue} We also let $\hat{z}^0 = (x^\star; \lambda^\star, \nabla g(x^0))$, extending the optimal primal-dual pair to include the conjugate variable at the initialization.}

\begin{theorem}\label{thm:fully-heteogeneous-component-guarantees}
    Consider any problem of the form~\labelcref{eq:WLOG-main-problem} with each $g_j$ being $(L_j,p_j)$-H\"older smooth and $(\mu_j,q_j)$-uniformly convex, target accuracy $\eps > 0$ sufficiently small, with $r=D_\lambda\sqrt{\eps}$.  Setting $C^{(k)}=D_\lambda^{(k)}/D_x^{(k)}$ and $\Delta^{(k)}=C^{(k)}/(2\Lada)$, {\color{blue} if $K \geq \left\lceil\log_2\left(\frac{Q(\Tilde z^0,\hat{z}^0)+\eps}{\eps}\right)\right\rceil$}, \Cref{alg:R-FCM} with stepsizes \labelcref{eq:outerloop-stepsizes} and \labelcref{eq:innerloop-stepsizes} must find an $(\eps,r)$-optimal solution~\labelcref{def:optimality-conditions}. {\color{blue} If $K$ is within a constant factor of $\left\lceil\log_2\left(\frac{Q(\Tilde z^0,\hat{z}^0)+\eps}{\eps}\right)\right\rceil$, this achieves the complexity bounds outlined in \cref{tab:composite-heterogeneous-rates}.}
\end{theorem}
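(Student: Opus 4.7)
The proof plan is to invoke Theorem~\ref{thm:heterogeneous-composite-convergence} once per restart and combine it with the growth bound in Lemma~\ref{lemma:UC-quad-lower-bound} to establish a geometric contraction in $\hQ$ across the $K = \lceil \log_2((\hQ(\Tilde{z}^0,\hat{z})+\eps)/\eps)\rceil$ restarts. The final restart output then satisfies $\hQ(\bar z^{T_{K-1},K-1},(x^\star;\lambda,\nu)) \leq \eps$ uniformly in $\lambda \in \Lambda_r, \nu \in V$, which by Lemma~\ref{lemma:Q-small->eps-opt} yields $(\eps,r)$-optimality.

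I would proceed by induction on $k$, maintaining the invariants $\|x^k - x^\star\| \leq D_x^{(k)}$, $\|\lambda^k - \lambda^\star\| \leq D_\lambda^{(k)}$, and $\hQ(\Tilde{z}^k,\hat{z}) \leq 2^{K-k}\eps$. These hold at $k=0$ by hypothesis and the definition of $K$. For the inductive step, the intermediate bound~\eqref{eq:Heterogeneous-Q-bound-eps/N} driving Theorem~\ref{thm:heterogeneous-composite-convergence}, applied to the $k$-th inner UFCM execution with $C^{(k)}=D_\lambda^{(k)}/D_x^{(k)}$ and $\Delta^{(k)}=C^{(k)}/(2\Lada)$, gives
\begin{equation*}
\hQ(\bar z^{T_k,k},(x^\star;\lambda,\nu)) \leq \frac{4\Lada (D_x^{(k)})^2 + 2\Lada (D_x^{(k)}/D_\lambda^{(k)})^2\bigl((D_\lambda^{(k)})^2+r^2\bigr)}{T_k^2}+\frac{\eps}{2}
\end{equation*}
for all $\lambda \in \Lambda_r,\ \nu \in V$. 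The $T_k$ of~\eqref{eq:R-UFCM-initializations} drives the first right-hand term below $2^{K-k-1}\eps/2$. Applying Lemma~\ref{lemma:UC-quad-lower-bound} at $\lambda=\lambda^\star \in \Lambda_r$ and $\nu=\nabla g(\bar x^{T_k,k})$ then converts this bound into
\begin{equation*}
\frac{\muada}{2}\|\bar x^{T_k,k}-x^\star\|^2 + \frac{1}{2L_h}\|\bar\lambda^{T_k,k}-\lambda^\star\|^2 \leq 2^{K-k-1}\eps + \eps,
\end{equation*}
which, combined with the case split on $\muada$ vs $\eps/D_x^2$ and on $L_h$ vs $D_\lambda^2/\eps$ in~\eqref{eq:R-UFCM-initializations}, reproduces the definitions of $D_x^{(k+1)}$ and $D_\lambda^{(k+1)}$, propagating the invariant. (When the primal restart is disabled, the invariant $D_x^{(k+1)} = D_x$ is trivial; similarly for the dual.)

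Accumulating $\Neps = \sum_k \lceil T_k\rceil$ then produces Table~\ref{tab:composite-heterogeneous-rates}. In the uniformly convex regime $\muada \geq 4\eps/D_x^2$, $T_k$ is constant in $k$ and $\Neps = \mathcal{O}(K\sqrt{\Lada/\muada}) = \mathcal{O}(\sqrt{\Lada/\muada}\log(1/\eps))$. Otherwise $T_k = \sqrt{16\Lada D_x^2/(2^{K-k-1}\eps)}$ grows geometrically with $k$, so $\sum_k T_k$ is dominated by its final term, yielding $\mathcal{O}(\sqrt{\Lada D_x^2/\eps})$. The proximal complexity follows from $\Peps^{(k)} \leq \lceil T_k\rceil + \lceil T_k\rceil^2\Delta^{(k)}M$; the four entries of Table~\ref{tab:composite-heterogeneous-rates} arise from the four combinations of primal-restart (governed by $\muada$ vs $\eps/D_x^2$) and dual-restart (governed by $L_h$ vs $D_\lambda^2/\eps$) being active.

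The main obstacle is reconciling two independent additive $\eps/2$ slacks: one inherited from Nesterov's universal smoothness lemma baked into the proof of Theorem~\ref{thm:heterogeneous-composite-convergence}, the other arising in Lemma~\ref{lemma:UC-quad-lower-bound}. These force the restart budget to be $2^{K-k}\eps$ rather than a clean $2^{-k}$ contraction and require showing the slack does not compound across $K$ restarts. Equally subtle, the implicit self-consistent definition of $\Lada$ in~\eqref{eq:Lada-definition-fully-general} uses $\min\{4D_x/\sqrt{\eps},8/\sqrt{\muada}\}$ precisely so that one single $\Lada$ is valid for every per-restart diameter $D_x^{(k)}$ simultaneously: in the primal-restarting regime the diameter shrinks to $\mathcal{O}(\sqrt{\eps/\muada})$, which is exactly the scale at which the $\muada$-term inside the minimum takes over, preventing any per-restart adjustment of $\Lada$ and thus the UFCM stepsizes.
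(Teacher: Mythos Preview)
Your high-level structure matches the paper's proof exactly: induction on restarts $k=0,\dots,K-1$, maintaining distance invariants $D_x^{(k)}\geq\|x^k-x^\star\|$ and $D_\lambda^{(k)}\geq\|\lambda^k-\lambda^\star\|$, invoking Theorem~\ref{thm:heterogeneous-composite-convergence} once per restart to drive $Q$ below $2^{K-k-1}\eps$, then a four-way case split on $\muada$ versus $4\eps/D_x^2$ and $L_h$ versus $D_\lambda^2/\eps$ to tally $\Neps$ and $\Peps$.

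The one substantive divergence, and it is a genuine gap, is your choice of growth lemma. You propagate the primal distance via Lemma~\ref{lemma:UC-quad-lower-bound}, which carries an additive $-\eps/2$ slack: from $Q\leq 2^{K-k-1}\eps$ you only get $\tfrac{\muada}{2}\|\bar x^{T_k,k}-x^\star\|^2\leq 2^{K-k-1}\eps+\eps/2$, which is strictly larger than $\tfrac{\muada}{2}(D_x^{(k+1)})^2=2^{K-k-1}\eps$. Thus the invariant $\|x^{k+1}-x^\star\|\leq D_x^{(k+1)}$ does not close with the algorithm's prescribed $D_x^{(k+1)}$ and $T_k$; you would need to inflate one of them by a constant, which the fixed algorithm does not allow. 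You correctly flag this slack as ``the main obstacle,'' but the claim that it ``does not compound'' is not enough to close the loop.

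The paper sidesteps this entirely by using Lemma~\ref{lemma:Hetero-growth-x} instead, which gives the exact (slack-free) bound $G_x(\|\bar x^{T_k,k}-x^\star\|)\leq Q\leq 2^{K-k-1}\eps$, and then inverts via the characterization $\muada=\eps/(G_x^{-1}(\eps/2))^2$ together with the monotonicity of $\mu_\bullet^{\mathtt{ADA}}$ (Lemma~\ref{lemma:muada-nondecreasing}) to obtain $\|\bar x^{T_k,k}-x^\star\|\leq G_x^{-1}(2^{K-k-1}\eps)=\sqrt{2^{K-k}\eps/\mu_{2^{K-k}\eps}^{\mathtt{ADA}}}\leq\sqrt{2^{K-k}\eps/\muada}=D_x^{(k+1)}$ exactly. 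The same exact inversion of $G_\lambda$ handles the dual side. A minor secondary point: your third invariant $\hQ(\Tilde z^k,\hat z)\leq 2^{K-k}\eps$ is stated at the \emph{initialization} $\Tilde z^k$, which becomes awkward in the non-restarting cases where $x^{k+1}=x^0$ or $\lambda^{k+1}=\lambda^0$; the paper instead tracks $Q$ only at the \emph{output} of each restart and relies solely on the distance invariants to feed the next UFCM call.
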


\begin{remark}\label{rmk:smooth-sc-analog}
    Since $L_{\eps,r}^{\mathtt{ADA}}$ is nonincreasing with $\eps$ and $\mu_{\eps}^{\mathtt{ADA}}$ is nondecreasing with $\eps$, this bound can be tightened by considering our \textit{Approximate Dualized Aggregate} constants specialized to the target accuracy sought by each application of UFCM. For each loop, one could run $UFCM(z^k,\lceil T_k \rceil, L_{2^{K-k-1}\eps,r}^\mathtt{ADA})$ with outer loop iteration count $$T_k = \min\left\{{\sqrt{\frac{96L_{2^{K-k-1}\eps,r}^\mathtt{ADA}}{\mu_{2^{K-k-1}\eps}^\mathtt{ADA}}}},\sqrt{\frac{24L_{2^{K-k-1}\eps,r}^\mathtt{ADA}D_x^2}{2^{K-k-1}\eps}}\right\} \ ,$$ 
    instead updating $D_x^{(k+1)}$ with $\sqrt{2^{K-k}\eps/\mu_{2^{K-k}\eps}^\mathtt{ADA}}$  whenever $\mu_{2^{K-k} \eps}^\mathtt{ADA} \geq {2^{K-k+2}\eps}/D_x^2$ and $D_\lambda^{(k+1)}$ with $\sqrt{2^{K-k}\eps L_h}$ whenever $L_h \leq D_\lambda^2/(2^{K-k}\eps)$. Consequently, one can derive guarantees $$\Neps = {\bigO}\left(\sum_{n=0}^{K-1} \sqrt{\frac{L_{2^{K-k-1}\eps,r}^\mathtt{ADA}}{\mu_{2^{K-k-1}\eps}^\mathtt{ADA}}}\right) , \quad \Peps = {\bigO}\left(\sum_{n=0}^{K-1}\sqrt{\frac{(L_{2^{K-k-1}\eps,r}^\mathtt{ADA}+M^2L_h)}{\mu_{2^{K-k-1}\eps}^\mathtt{ADA}}}\right)$$ which avoids additional multiplicative log terms if the sums above total up geometrically. 
\end{remark}

\begin{corollary}\label{cor:muada-recover-UC-bounds}
   Consider minimizing $F(x) = g_0(x)+u(x)$ where $g_0$ is $(L,p)$-\Holder smooth and $(\mu,q)$-uniformly convex function with $D_x \geq \|x^0-x^\star\|$, initialization $\lambda^0= 1$, and any target accuracy $0 < \eps \leq \min\left\{1,2\sqrt{6}LD_x^{1+p}, \frac{2\mu}{1+q}\left(\frac{D_x}{2}\right)^{1+q},4\left(\frac{1+q}{2}\right)^{\frac{2}{3q+1}} L^{\frac{2\left(q+1\right)}{\left(3q+1\right)\left(1+p\right)}}\right\}$. Then \Cref{alg:R-FCM} recovers \labelcref{eq:optimal-holderSmoothUniformlyConvex-rate}:
   $$\Neps = \Peps =  K_{UC}(\eps,L,p,\mu,q) = \begin{cases}{\bigO}\left(\left(\frac{L^{1+q}}{\mu^{1+p}\eps^{q-p}}\right)^{\frac{2}{(1+3p)(1+q)}}\right) &\quad \text{if } q>p \ , \\
    {\bigO}\left(\left(\frac{L^{1+q}}{\mu^{1+p}}\right)^{\frac{2}{(1+3p)(1+q)}}\log\left(\frac{F(x^0)-F^\star}{\eps}\right)\right)  &\quad \text{if }q=p
    \end{cases}
$$
up to logarithmic factors\footnote{Using the modification discussed in  
 \cref{rmk:smooth-sc-analog}, one can recover the optimal rate without incurring log factors.}.
\end{corollary}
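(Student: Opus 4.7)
The plan is to specialize Theorem~\ref{thm:fully-heteogeneous-component-guarantees} to this single-component setting and then evaluate the implicit definitions of $\Lada$ and $\muada$ in closed form to recover $K_{UC}(\eps,L,p,\mu,q)$.

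First I would invoke the reformulation of Section~\ref{Heterogeneous-Comp-Prelim} so that the problem reads $\hat h(g_0(x)) + u(x)$ with $\hat h(z) = z$. Then $\dom{\hat h^*} = \{1\}$, forcing $\lambda^\star = 1$; initializing $\lambda^0 = 1$ collapses $D_\lambda$ (and hence $C = D_\lambda/D_x$ and $\Delta^{(k)} = C/(2\Lada)$) to an arbitrarily small value, exactly as in the proof of Corollary~\ref{cor:Lada-recovers-HS}. With $\Delta^{(k)}$ arbitrarily small, $S_t = \lceil M_t \Delta^{(k)} t\rceil = 1$ throughout, so each inner loop performs a single proximal step and $\Neps = \Peps$. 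Moreover, $\hat h$ is linear, so $L_h = 0 \leq D_\lambda^2/\eps$, placing us in the right-hand column of Table~\ref{tab:composite-heterogeneous-rates}. The hypothesis $\eps < 4 D_x^2 \muada$ combined with the monotonicity of Lemma~\ref{lemma:muada-nondecreasing} puts us in the uniformly-convex row of that table, yielding $\Neps = \Peps = \mathcal{O}(\sqrt{\Lada/\muada}\,\log(1/\eps))$.

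Next I would evaluate the two aggregate constants. Equation~\eqref{eq:muada-def-general} with $m=1$ and $\lambda_1^\star = 1$ collapses to $\muada^{(q+1)/2} = \tfrac{2\mu}{q+1}\eps^{(q-1)/2}$, giving $\muada = \bigl(\tfrac{2\mu}{q+1}\bigr)^{2/(q+1)}\eps^{(q-1)/(q+1)}$. For $\Lada$, the uniformly-convex regime makes the minimum in~\eqref{eq:Lada-definition-fully-general} equal $8/\sqrt{\muada}$; setting $\alpha := (1-p)/(1+p)$ and separating powers of $\Lada$ (using $1 - \alpha/2 = (1+3p)/(2(1+p))$) yields
\[
\Lada \;=\; \Theta(1)\cdot L^{4/(1+3p)}\,\eps^{-2(1-p)/(1+3p)}\,\muada^{-(1-p)/(1+3p)}.
\]

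Finally, I would combine the two expressions. Substituting the closed form for $\muada$ into $\Lada/\muada$ and collecting exponents of $L$, $\mu$, and $\eps$ should produce
\[
\sqrt{\Lada/\muada} \;=\; \Theta(1)\cdot\left(\frac{L^{1+q}}{\mu^{1+p}\eps^{q-p}}\right)^{\!2/((1+3p)(1+q))}.
\]
Multiplying by the $\log(1/\eps)$ restart overhead then delivers $K_{UC}$: when $q > p$ the log is subsumed into ``up to logarithmic factors''; when $q = p$ the $\eps$-power inside the radical cancels and the restart count $K = \lceil\log_2((Q(\tilde z^0,\hat z)+\eps)/\eps)\rceil = O(\log((F(x^0) - F^\star)/\eps))$ from Algorithm~\ref{alg:R-FCM} supplies the remaining factor.

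The main obstacle is purely bookkeeping: verifying that the exponent combinations $2(1-p)(q+1) + (q-1)(2+2p) = 4(q-p)$ (controlling the $\eps$-power) and $\tfrac{1-p}{1+3p} + 1 = \tfrac{2+2p}{1+3p}$ (controlling the $\muada$-power) conspire to yield precisely the $2/((1+3p)(1+q))$ power structure of $K_{UC}$; the remaining steps are direct specializations of Theorem~\ref{thm:fully-heteogeneous-component-guarantees}.
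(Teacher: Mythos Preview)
Your proposal is correct and follows essentially the same route as the paper's own proof: both specialize to $\hat h(z)=z$ so that $\lambda^\star=\lambda^0=1$ forces $D_\lambda$ (hence $\Delta$) to be negligible and $S_t\equiv 1$, then invoke Theorem~\ref{thm:fully-heteogeneous-component-guarantees} to obtain $\Neps=\Peps=\LogBigO(\sqrt{\Lada/\muada})$, and finally solve the one-term implicit definitions~\eqref{eq:muada-def-general} and~\eqref{eq:Lada-definition-fully-general} in closed form and substitute. Your version is slightly more explicit about locating the correct cell of Table~\ref{tab:composite-heterogeneous-rates} and about the exponent bookkeeping, but the argument is the same.
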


\begin{proof}

By hypothesis, $\eps$ is sufficiently small to apply \cref{thm:fully-heteogeneous-component-guarantees}. Noting our Approximate Dualized Aggregate constants equal
$$\Lada = (1+r)^\frac{4}{1+3p}\left[\frac{1-p}{1+p}\cdot \frac{4\sqrt{6}}{\eps\sqrt{\muada}}\right]^\frac{2-2p}{1+3p} L^\frac{4}{1+3p}  \quad \text{and } \quad \muada =  \left(\frac{2\mu}{1+q}\right)^\frac{2}{1+q}\eps^\frac{q-1}{q+1} \ ,$$
we conclude 
\begin{align*}
       \Neps &= \LogBigO\left(\sqrt{\frac{\Lada}{\muada}}\right) =  \LogBigO\left({\frac{(\eps \sqrt{\muada})^{-\frac{1-p}{1+3p}}L^\frac{2}{1+3p}}{\sqrt{\muada}}}\right) =   \LogBigO\left(\left(\frac{L^{1+q}}{\mu^{1+p}\eps^{q-p}}\right)^{\frac{2}{(1+3p)(1+q)}}\right) \ ,
   \end{align*}
where the first equality considers  \cref{thm:fully-heteogeneous-component-guarantees},   the second equality substitutes $\Lada$, and the last equality then substitutes $\muada$ and simplifies to recover \labelcref{eq:optimal-holderSmoothUniformlyConvex-rate}. 

Since $\lambda^0=\lambda^\star$, we can make $\Delta$ arbitrarily small, so $S_t = 1$ for each $t$. Therefore, the resulting proximal complexity equals the gradient oracle complexity. 
\end{proof}
 
\begin{corollary}\label{cor:UC-sum-bound}
    For any problem of the form \labelcref{eq:WLOG-main-problem}, target accuracy $\eps>0$ sufficiently small, with $r = D_\lambda \sqrt{\eps}$, suppose each $g_j$ is $(L_j,p_j)$-smooth and $(\mu_j,q_j)$-uniformly convex. \Cref{alg:R-FCM}, with stepsizes~\eqref{eq:outerloop-stepsizes} and~\eqref{eq:innerloop-stepsizes}, with choices of $C^{(k)}=D_\lambda^{(k)}/D_x^{(k)}$ and $\Delta^{(k)}=C^{(k)}/(2\Lada)$ must find an $(\eps,r)$-optimal solution with oracle complexity bound $$\Neps  = \LogBigO\left(\sum_{j=1}^m K_{UC}(\eps,(\lambda_j^\star+r)L_j,p_j,\muada,1) \right) \ .$$



\end{corollary}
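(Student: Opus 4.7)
The plan is to mirror the proof of Corollary~\ref{cor:Lada-general-implicit-formula}, adapting it to the uniformly convex restart setting. Under the assumption $\eps \leq 4D_x^2\muada$, Theorem~\ref{thm:fully-heteogeneous-component-guarantees} places us (up to constants absorbed in $\LogBigO$) in the restart-active regime of Table~\ref{tab:composite-heterogeneous-rates}, yielding $\Neps = \LogBigO\bigl(\sqrt{\Lada/\muada}\bigr)$. The entire task is then to show that this aggregated rate decomposes, up to logarithmic factors, as a sum of the individual per-component rates $K_{UC}(\eps,(\lambda_j^\star+r)L_j,p_j,\muada,1)$.

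The key algebraic step is to substitute $\Lada = T^2 \muada$ (with $T = \Neps$) into the implicit definition~\labelcref{eq:Lada-definition-fully-general}, using the $8/\sqrt{\muada}$ branch of the min (which is the active branch in this regime). After canceling the $\sqrt{\muada}$ factor inside the bracket, the resulting identity becomes
\begin{equation*}
    1 = \sum_{j=1}^m \tilde c_j \,\frac{\bigl((\lambda_j^\star+r)L_j\bigr)^{2/(1+p_j)}}{\muada\,\eps^{(1-p_j)/(1+p_j)}}\, T^{-(1+3p_j)/(1+p_j)} \ ,
\end{equation*}
where $\tilde c_j = \bigl[\tfrac{1-p_j}{1+p_j}\cdot 8m\bigr]^{(1-p_j)/(1+p_j)}$. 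This is an exact analog of~\labelcref{eq:implicit-HS-solve}, now with $\eps$ replaced inside by $\muada^{-1}$ structurally and an extra $\muada$ in the denominator reflecting the strong-convexity-like acceleration.

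As the left-hand side is $1$ and each summand is nonincreasing in $T$, it suffices to find $T_j$ making each summand equal $1/m$. Solving for $T_j$ gives
\begin{equation*}
    T_j = \mathcal{O}\!\left(\left(\frac{\bigl((\lambda_j^\star+r)L_j\bigr)^2}{(\muada)^{1+p_j}\,\eps^{1-p_j}}\right)^{\!1/(1+3p_j)}\right) \ ,
\end{equation*}
which, reading off~\labelcref{eq:optimal-holderSmoothUniformlyConvex-rate} with $q=1$ and strong-convexity constant $\muada$, is precisely $K_{UC}(\eps,(\lambda_j^\star+r)L_j,p_j,\muada,1)$ up to constants. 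Bounding $T \leq \max_j T_j \leq \sum_j T_j$ and absorbing the logarithmic restart overhead into $\LogBigO$ yields the claim.

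The main obstacle will be bookkeeping around the $\min\{4D_x/\sqrt{\eps},\,8/\sqrt{\muada}\}$ in~\labelcref{eq:Lada-definition-fully-general}: strictly, the hypothesis $\eps \leq 4D_x^2\muada$ differs by a factor of $16$ from the condition $\muada \geq 4\eps/D_x^2$ under which the $8/\sqrt{\muada}$ branch is guaranteed active, so one must either tighten the hypothesis, use the fact that the min is bounded above by $8/\sqrt{\muada}$ plus a harmless constant, or observe that switching branches only affects $\Lada$ by a bounded multiplicative factor that can be swept into the $\LogBigO$. A secondary nuisance is relating the $\log(1/\eps)$ factor from the restart analysis of Theorem~\ref{thm:fully-heteogeneous-component-guarantees} to the implicit equation above; this is handled by the $\LogBigO$ notation, or more cleanly by appealing to the sharper per-restart accounting described in Remark~\ref{rmk:smooth-sc-analog}.
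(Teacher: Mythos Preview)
Your proposal is correct and takes essentially the same approach as the paper. The paper bounds $\Lada$ first by decomposing its implicit definition~\eqref{eq:Lada-definition-fully-general} into per-component pieces $L_{j,\eps,r}^{\mathtt{ADA}}$ (each solving its summand equal to $1/m$) and then plugs $\Lada \leq \sum_j L_{j,\eps,r}^{\mathtt{ADA}}$ into $\Neps = \LogBigO(\sqrt{\Lada/\muada})$; you instead substitute $\Lada = T^2\muada$ first and decompose the resulting implicit equation in $T$ directly, but the algebra and the key ``solve each summand $=1/m$'' step are identical. Your explicit flagging of the $\min$-branch discrepancy is a nice touch---the paper simply uses the $8/\sqrt{\muada}$ value without comment, which is justified since replacing the $\min$ by this larger value only produces an upper bound on $\Lada$.
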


\begin{proof}
Since $\eps \leq D_x^2\muada/4$, it holds that after rearrangement of the definition in \labelcref{eq:Lada-definition-fully-general}, $\Lada$ is the unique positive root to $$ \sum_{j=1}^m (\Lada)^{-\frac{1+3p_j}{2(1+p_j)}}\left[\frac{1-p_j}{1+p_j}\cdot \frac{4\sqrt{6}m}{\eps\sqrt{\muada}}\right]^\frac{1-p_j}{1+p_j}\left[(\lambda_j^\star+r)L_j\right]^\frac{2}{1+p_j} = 1 \ . $$  
Similar to proving \cref{cor:Lada-general-implicit-formula}, we can bound ${\color{blue} \sqrt{\Lada} \leq \sum_{j=1}^m \sqrt{L_{j,\eps,r}^{\mathtt{ADA}}}}$ where $L_{j,\eps,r}^{\mathtt{ADA}}$ solves component-wise as the unique positive root to the following equation $$(L_{j,\eps,r}^\mathtt{ADA})^{-\frac{1+3p_j}{2(1+p_j)}}\left[\frac{1-p_j}{1+p_j}\cdot \frac{4\sqrt{6}m}{\eps\sqrt{\muada}}\right]^\frac{1-p_j}{1+p_j}\left[(\lambda_j^\star+r)L_j\right]^\frac{2}{1+p_j} = \frac{1}{m} \ .$$
 We can then conclude \begin{equation}\label{eq:Lada-heterogeneous-bound}
    \sqrt{\Lada} \leq \sum_{j=1}^m m^\frac{2}{1+3p_j}\left[\frac{1-p_j}{1+p_j}\cdot \frac{4\sqrt{6}}{\eps\sqrt{\muada}}\right]^\frac{1-p_j}{1+3p_j}\left[(\lambda_j^\star+r)L_j\right]^\frac{2}{1+3p_j} \ .
\end{equation}
Finally, it holds that
 $$\Neps =\LogBigO \left(\sum_{j=1}^m \frac{\left(\eps\sqrt{\muada}\right)^{-\frac{1-p_j}{1+3p_j}}\left[(\lambda_j^\star+r)L_j\right]^\frac{2}{1+3p_j}}{\sqrt{\muada}}\right)=\LogBigO\left(\sum_{j=1}^m \left(\frac{\left[(\lambda_j^\star+r)L_j\right]^2}{(\muada)^{1+p_j}\eps^{1-p_j}}\right)^\frac{1}{(1+3p_j)}\right)\ ,$$
where the first equality considers the result from \cref{thm:fully-heteogeneous-component-guarantees} and substitutes the upper bound on $\Lada$ in \labelcref{eq:Lada-heterogeneous-bound}, and the second equality simplifies to yield the desired result.
\end{proof}
Fixing $h(z) = \sum_{j=1}^m z_j$, each $\lambda_j^\star=1$, and bounding $\muada$ by only considering a single component in its sum recovers the results for heterogeneous sums of \Holder smooth terms of~\citep[Theorem 1.2]{grimmer2023optimal}. When each $g_j$ is smooth and $h$ is a nonpositive indicator function, this second corollary recovers the results of~\citep[Theorem 6]{zhang2022solving} by lower bounding $\muada$ by the $\mu_0$-strong convexity of $g_0$ (see the concluding Section~\ref{section:application-to-func-constrained} for further consideration of this special case).

\subsection{Analysis of R-UFCM (Proof of \cref{thm:fully-heteogeneous-component-guarantees})}\label{section:proof-of-RUFCM}

   Recall our analysis only depends on the $(L_j,p_j)$-\Holder smooth and $(\mu_j,q_j)$-uniformly convex of $g_j$ through our analysis through the universal constants $\Lada$ and $\muada$ defined in~\eqref{eq:Lada-definition-fully-general} and~\eqref{eq:muada-def-general}. {\color{blue} Let $\lambda \in \Lambda_r$ and $\nu \in V$.} Below, we inductively prove that in all four of the cases in Table~\ref{tab:composite-heterogeneous-rates} (determined by whether $\muada < 4\eps/D_x^2$ and whether $L_h > D_\lambda^2/\eps$) the following are maintained at each outer iteration of the restarted method $k=0,1,\dots,K-1$
   \begin{equation*}
        D_x^{(k)} \geq \|x^k-x^\star\|, \quad D_\lambda^{(k)} \geq \|\lambda^k-\lambda^\star\|, \quad Q((\bar x^{T_k,k};\bar{\lambda}^{T_k,k}, \bar \nu^{T_k,k}),(x^\star; \lambda,\nu)) \leq 2^{K-k-1}\eps \ , 
   \end{equation*}
   where we recall $(\bar x^{T_k,k};\bar{\lambda}^{T_k,k}, \bar \nu^{T_k,k})$ from our averaging scheme \labelcref{eq:averaging-scheme}.
   {\color{blue} By definition and application of \cref{lemma:UC-quad-lower-bound}, $D_x^{(0)} \geq \|x^0-x^\star\|$ and $D_\lambda^{(0)} \geq \|\lambda^0-\lambda^\star\|$ both hold at $k=0$, regardless of the relative sizes of $\muada$ and $L_h$}. Our inductive proof proceeds by first establishing that
   \begin{equation} \label{eq:restarted-induction-step-one}
       D_x^{(k)} \geq \|x^k-x^\star\|, \quad D_\lambda^{(k)} \geq \|\lambda^k-\lambda^\star\| \implies Q((\bar x^{T_k,k};\bar{\lambda}^{T_k,k}, \bar \nu^{T_k,k}),(x^\star; \lambda,\nu)) \leq 2^{K-k-1}\eps
   \end{equation}
   for each $k$. The key result to this end is that $Q((\bar x^{T_k,k};\bar{\lambda}^{T_k,k}, \bar \nu^{T_k,k}),(x^\star; \lambda,\nu)) \leq 2^{K-k-1}\eps$ if 
   $$T_k \geq \sqrt{\frac{24\Lada (D_x^{(k)})^2}{2^{K-k-1}\eps}}$$ 
   by  \cref{thm:heterogeneous-composite-convergence}.
   Hence, we just need to verify our choice of $T_k$ satisfies this inequality in each case. Then, to complete the induction, we establish
   \begin{equation} \label{eq:restarted-induction-step-two}
       Q((\bar x^{T_k,k};\bar{\lambda}^{T_k,k}, \bar \nu^{T_k,k}),(x^\star; \lambda,\nu)) \leq 2^{K-k-1}\eps \implies D_x^{(k+1)} \geq \|x^{k+1}-x^\star\|, \quad D_\lambda^{(k+1)} \geq \|\lambda^{k+1}-\lambda^\star\| \ . 
   \end{equation}
   The key result to this end is the growth condition from Lemma~\ref{lemma:Hetero-growth-x}, which guarantees that
   $$ G_x(\|\bar x^{T_k,k}-x^\star\|) + G_\lambda(\|\bar{\lambda}^{T_k,k}-\lambda^\star\|)  \leq  Q((\bar x^{T_k,k};\bar{\lambda}^{T_k,k}, \bar \nu^{T_k,k}),(x^\star; {\color{blue}\lambda^\star,\nabla g(\bar{x}^{T_k,k})})) \leq 2^{K-k-1}\eps \ . $$
   The remainder of this proof verifies the implications~\eqref{eq:restarted-induction-step-one} and~\eqref{eq:restarted-induction-step-two} and calculates the total gradient and proximal complexity in each case of Table~\ref{tab:composite-heterogeneous-rates}. Finally, we deduce that $$Q((\bar x^{T_{K-1},K-1};\bar \lambda^{T_{K-1},K-1},\bar \nu^{T_{K-1},K-1} ), (x^\star, \hat{\lambda}, \nabla g(\bar x^{T_{K-1},K-1}))) \leq \eps$$
   and apply \cref{lemma:Q-small->eps-opt} to conclude that $\bar x^{T_{K-1},K-1}$ is $(\eps,r)$-optimal.

   \paragraph{Case 1: } Suppose $\muada < 4\eps/D_x^2$. 
    Observe the first needed implication for our induction \eqref{eq:restarted-induction-step-one} is immediate from \cref{thm:heterogeneous-composite-convergence} as
    $$T_k = \sqrt{\frac{24\Lada D_x^2}{2^{K-k-1}\eps}}\ .$$
    The gradient complexity follows from geometrically summing this quantity {\color{blue} and bounding $K < \infty$}, so 
    $$ \sum_{k=0}^{K-1} \lceil T_k\rceil \leq \sum_{k=0}^{K-1} 1+ \sqrt{\frac{24\Lada D_x^2}{2^{K-k-1}\eps}} \leq K+  \sum_{j=0}^{\infty} \sqrt{\frac{24\Lada D_x^2}{2^{j}\eps}} = \sqrt{\frac{(144+96\sqrt{2})\Lada D_x^2}{\eps}}+K \ .$$
    Next, we verify the second needed implication~\eqref{eq:restarted-induction-step-two}. The primal bound is vacuously the case since the primal initialization is constant, so $x^k = x^0$ for each $k = 0, \dots, K-1$ and $$D_x^{(k)}=D_x \geq \|x^0-x^\star\|=\|x^k-x^\star\|\  . $$
    To derive the dual distance bound, we consider the two cases of dual restarting.
\begin{enumerate}
    \item[] \textbf{Case 1a}: Suppose $L_h > D_\lambda^2/\eps$.  In this setting, the dual variable does not reinitialize each iteration and $D_\lambda^{(k)}=D_\lambda \geq \|\lambda^0-\lambda^\star\|=\|\lambda^k-\lambda^\star\|$, completing the proof of~\eqref{eq:restarted-induction-step-two}. Observe that since $\Delta^{(k)}=D_\lambda/(2D_x\Lada)$ as neither variable reinitializes, the number of proximal steps on $u$ and $h$ taken each iteration $k$ of R-UFCM is \begin{align}\label{eq-prox-complexity-rufcm}\Peps^{(k)}=\lceil T_k \rceil+\lceil T_k\rceil ^2\Delta^{(k)}M &\leq 1+\sqrt{\frac{24\Lada D_x^2}{2^{K-k-1}\eps}}+\frac{24MD_xD_\lambda^{(k)}}{2^{K-k-1}\eps}+\frac{MD_\lambda^{(k)}}{\Lada D_x},\\
    &=1+\sqrt{\frac{24\Lada D_x^2}{2^{K-k-1}\eps}}+\frac{24MD_xD_\lambda}{2^{K-k-1}\eps}+\frac{MD_\lambda}{\Lada D_x} \nonumber\end{align}
    where $M$ bounds $\|\nabla g(x)\|$ for $x \in B(x^\star, \sqrt{{\color{blue}27}D_x^2})$, and we use the facts {\color{blue} $\lceil T_k\rceil \leq T_k + 1$ and $\lceil T_k \rceil^2 \leq 2T_k^2+2$}.
    The total proximal complexity is then at most $$\sum_{k=0}^{K-1}\Peps^{(k)} \leq \sqrt{\frac{(144+96\sqrt{2})\Lada D_x^2}{\eps}}+\frac{48MD_xD_\lambda}{\eps}+ K\left(1+\frac{MD_\lambda}{\Lada D_x}\right) \ . $$
    
    \item[] \textbf{Case 1b}: Suppose $L_h \leq D_\lambda^2/\eps$. To verify~\eqref{eq:restarted-induction-step-two}, {\color{blue} we note that for the primary executions of UFCM where $D_\lambda^2 < 2^{K-k+1}\eps L_h$, our initializations maintain $D_\lambda^{(k)}=D_\lambda$ and $\lambda^{k} = \lambda^0$. So $D_\lambda^{(k)} \geq \|\lambda^{k} - \lambda^\star\|$. For the subsequent executions,} observe that \cref{lemma:Hetero-growth-x} ensures $$G_\lambda(\|\lambda^{k+1}-\lambda^\star\|) \leq  Q((\bar x^{T_k,k};\bar{\lambda}^{T_k,k}, \bar \nu^{T_k,k}),(x^\star; {\color{blue}\lambda^\star,\nabla g(\bar{x}^{T_k,k})})) \leq 2^{K-k-1}\eps\ ,$$ where $\lambda^{k+1} \gets \bar{\lambda}^{T_k,k}$ by line 5 of \Cref{alg:R-FCM}. We then utilize the growth bound to yield
    $$\|\lambda^{k+1}-\lambda^\star\| \leq G_\lambda^{-1}(2^{K-k-1}\eps) = \sqrt{2^{K-k}\eps L_h} = D_\lambda^{(k+1)} \ ,$$ 
    completing our induction in this case.

    {\color{blue} From the proximal complexity for application $k$ of UFCM~\labelcref{eq-prox-complexity-rufcm}, we note that \begin{align*}\Peps^{(k)} &\leq 1 + \sqrt{\frac{24\Lada D_x^2}{2^{K-k-1}\eps}}+\frac{24MD_x\sqrt{2^{K-k+1}\eps L_h}}{2^{K-k-1}\eps}+\frac{MD_\lambda}{\Lada D_x}\\
    &\leq 1+\sqrt{\frac{(48\Lada + 1152M^2L_h)D_x^2}{2^{K-k-1}\eps}}+\frac{MD_\lambda}{\Lada D_x}\ , 
    \end{align*}
    since $D_\lambda^{(k)} \leq \min\{D_\lambda, \sqrt{2^{K-k+1}\eps L_h}\}$ for each $k$. The total proximal complexity is then bounded by $$\sum_{k=0}^{K-1} \Peps^{(k)} \leq  \sqrt{\frac{(6+4\sqrt{2})(48\Lada + 1152 M^2L_h)D_x^2}{\eps}}+K\left(1+\frac{MD_\lambda}{\Lada D_x}\right)\ .$$
    }
    \end{enumerate}

\paragraph{Case 2:} Now suppose $\muada \geq 4\eps/D_x^2$. Observe that the first step of our induction \labelcref{eq:restarted-induction-step-one} holds immediately after noting $D_x^{(k)}=\sqrt{2^{K-k+1}\eps/\muada}$ and applying \cref{thm:heterogeneous-composite-convergence} as $$T_k = \sqrt{\frac{96\Lada}{\muada}} = \sqrt{\frac{96 \Lada\left(D_x^{(k)}\right)^2}{2^{K-k+1}\eps}} \ . $$

  Hence, the total gradient complexity is bounded by $$\sum_{k=0}^{K-1} \lceil T_k\rceil  \leq K{\color{blue}\left(1+\sqrt{\frac{96\Lada}{\muada}}\right)} \ ,$$
{\color{blue} where we bound $\lceil T_k \rceil$ by $T_k + 1$. Note when $K$ is within a constant factor of $\left\lceil\log_2\left(\frac{Q(\Tilde z^0,\hat{z}^0)+\eps}{\eps}\right)\right\rceil$, then the gradient complexity is $\bigO\left(\sqrt{\frac{\Lada}{\muada}}\log\left(\frac{1}{\eps}\right)\right)$.}

Next, we verify the second needed implication \labelcref{eq:restarted-induction-step-two}, noting that the dual distance bounds $D_\lambda^{(k)} \geq \|\lambda^{k}-\lambda^\star\|$ have already been shown to hold. Thus, we only need to consider the primal distance bounds.  For $k = 0$, our initializations and \cref{lemma:UC-quad-lower-bound} ensure $$D_x^{(0)} = \sqrt{\frac{2^{K+1}\eps}{\muada}} \geq \sqrt{\frac{2(Q(\Tilde z^0,\hat{z}^0)+\eps)}{\muada}} \geq \|x^0-x^\star\| \ .$$ For $k \geq 1$, \cref{lemma:Hetero-growth-x} ensures that

$$G_x(\|x^{k+1}-x^\star\|) \leq  Q((\bar x^{T_k,k};\bar{\lambda}^{T_k,k}, \bar \nu^{T_k,k}),(x^\star; {\color{blue} \lambda^\star,\nabla g(\bar{x}^{T_k,k})})) \leq 2^{K-k-1}\eps \ ,$$
where $x^{k+1} \gets \bar{x}^{T_{k},k}$ by line 4 of \Cref{alg:R-FCM}.    
This bound implies $$\|{x}^{k+1}-x^{\star}\| \leq G_x^{-1}(2^{K-k-1}\eps)=\sqrt{2^{K-k}\eps/\mu_{2^{K-k}\eps}^{\mathtt{ADA}}}\leq \sqrt{2^{K-k}\eps/\muada} = D_x^{(k+1)} \ ,$$
where the first equality comes from the characterization that $\muada = \eps/(G_x^{-1}(\eps/2))^2$ and the last inequality holds as $\muada$ is a nondecreasing function of $\eps$. 
Next, we consider the proximal operator complexity.
\begin{enumerate}
    \item[] \textbf{Case 2a}: Suppose $L_h > D_\lambda^2/\eps$. In this case, $D_\lambda^{(k)}=D_\lambda$. The number of proximal steps performed each execution of UFCM is  $$\Peps^{(k)} = \lceil T_k \rceil+\lceil T_k\rceil ^2\Delta^{(k)}M \leq 1 + \sqrt{\frac{96\Lada}{\muada}}+\frac{192MD_\lambda}{\sqrt{2^{K-k+1} \muada \eps} } \ ,  $$
    where the first equality uses \labelcref{eq:general-P_eps}, and the inequality substitutes $\Delta^{(k)}=D_\lambda/(2\Lada D_x^{(k)})$ with $D_x^{(k)}=\sqrt{2^{K-k+1}\eps/\muada}$, further noting that since $\muada \leq 96\Lada$, $\lceil T_k \rceil \leq T_k+1 \leq 2T_k$. Therefore, the total proximal complexity is bounded by $$\sum_{k=0}^{K-1} \Peps^{(k)} \leq \frac{(1152+786\sqrt{2})MD_\lambda}{\sqrt{ \muada \eps} }+K{\color{blue}\left(1+\sqrt{\frac{96\Lada}{\muada}}\right)}\ . $$

    \item[] \textbf{Case 2b}: Suppose $L_h \leq D_\lambda^2/\eps$. Now $D_\lambda^{(k)} \leq \sqrt{2^{K-k+1} \eps L_h}$, in which case $$\Peps^{(k)} \leq 1+\sqrt{\frac{96\Lada}{\muada}}+\frac{192M\sqrt{L_h}}{\sqrt{\muada}} \leq 1+\sqrt{\frac{192\Lada+{\color{blue}73728}M^2 L_h}{\muada}} \ .$$
    The total proximal complexity is then bounded by $$\sum_{k=0}^{K-1} \Peps^{(k)} \leq K{\color{blue} \left(1+\sqrt{\frac{192\Lada+{\color{blue}73728}M^2 L_h}{\muada}}\right)}\ . $$
\end{enumerate}

\subsection{Application to Functionally Constrained Optimization}\label{section:application-to-func-constrained}

We conclude this section considering functionally constrained optimization with strongly convex and smooth components, recovering the linear convergence in terms of first-order oracle calls to $g$ and sublinear convergence in terms of proximal operations analogous to~\citep{zhang2022solving}.  In this setting, $h(z_0, \dots, z_m) = z_0+\iota_{z \leq 0}(z_1, \dots, z_m)$ with each $g_j$ being $\mu_j$-strongly convex results in constant $\muada = \mu_0 + \sum_{j=1}^m \lambda^\star_j\mu_j$. (Note that our method and theory also apply more generally, given only H\"older smoothness and uniform convexity, but for the sake of this comparison, we restrict ourselves to considering only smooth and strongly convex constraints.)

Since $h$ is nonsmooth, i.e.~{\color{blue} $L_h=\infty$}, each restarted application of UFCM uses the fixed dual initialization $\lambda^0$. However, for small enough $\eps$, the primal variables and distance bounds will update, with $D_x^{(k)} =  \sqrt{2^{K-k+1}{\eps}/{\muada}}$. Therefore, \Cref{alg:R-FCM} reaches an $\eps$-optimal solution with complexity bounds \begin{align*}
  \Neps = {\bigO}\left( \sqrt{\frac{L_{\eps,r}^{\mathtt{ADA}}}{\muada}}\log\left({\frac{Q(\Tilde z^0,\hat{z}^0)}{\eps}}\right)\right),  \quad
    \Peps = {\bigO}\left(\Neps+\frac{D_\lambda{M}}{\sqrt{\muada\eps}}\right) \ 
 \end{align*} 
In contrast, the ACGD-S method of~\citep[Corollary 4]{zhang2022solving} has oracle complexities $$\Neps = {\bigO}\left(\sqrt{\frac{L(\Lambda_r)}{\mu_0}}\log\left(\frac{\sqrt{{L(\Lambda_r)}{\mu_0}}D_x^2}{\eps}\right)\right), \quad \Peps = {\bigO}\left(\Neps + \frac{d(\Lambda_r)M}{\sqrt{\mu_0 \eps}}\right) \ ,$$
where $L(\Lambda_r) = \max_{\lambda \in \Lambda_r}\{\sum_{j=1}^m \lambda_jL_j\}$ and $d(\Lambda_r)=\|\lambda^\star\|+r$.
In the case where only $g_0$ is strongly convex, our rate recovers theirs as $\muada = \mu_0 + \sum_{j=1}^m \lambda^\star_j 0 = \mu_0$. Importantly, our method additionally benefits from strong convexity in the components as $\muada > \mu_0$ whenever any active constraint is strongly convex (or even just, uniformly convex).

\section*{Funding}
This work was supported in part by the Air Force Office of Scientific
Research under award number FA9550-23-1-0531. Benjamin Grimmer was additionally supported
as a fellow of the Alfred P. Sloan Foundation


\appendix
\section{Deferred Proofs}

\subsection{Deferred Proofs for Smooth Composite Analysis}\label{section:appendix-smooths-comp}

\subsubsection*{\textit{Proof of \cref{lemma:x-lambda-convergence}}.} 

       First we establish a convergence bound on the inner loop for each phase. Fix $t \geq 1$. Since $u(y)+\eta_t\|y-x^{t-1}\|^2/2$ has strong convexity with modulus $\eta_t$, the proximal step for $y$ in line 8 of \Cref{alg:FCM} satisfies the three point inequality (see~\citep[Lemma 3.5]{Lan.G})

    \begin{align}\label{eq:initial-y-prox-bound}
    \begin{split}
        \inner{y_s^{(t)}-x}{\tilde{h}^{(t),s}}&+u(y_s^{(t)})-u(x)+\frac{\eta_t}{2}\left(\|y_s^{(t)}-x^{t-1}\|^2-\|x-x^{t-1}\|^2\right)\\
        &+\frac{1}{2}\left[(\beta^{(t)}+\eta_t)\|x-y_s^{(t)}\|^2+\beta^{(t)}\|y_s^{(t)}-y_{s-1}^{(t)}\|^2-\beta^{(t)}\|y_{s-1}^{(t)}-x\|^2\right] \leq 0 \ .
        \end{split}
    \end{align}
    Recall that $$\Tilde{h}^{(t),s}=\begin{cases}
  (\nu^t)^\top\lambda_0^{(t)}+\rho^{(t)}(\nu^{t-1})^\top(\lambda_0^{(t)}-\lambda_{-1}^{(t)}) & \text{if} \ s=1,\\
       (\nu^t)^\top\lambda_{s-1}^{(t)}+(\nu^{t})^\top(\lambda_{s-1}^{(t)}-\lambda_{s-2}^{(t)}) & \text{otherwise.}\\
       \end{cases}$$  
       In particular, line 7 of \Cref{alg:FCM} ensures that for $s \geq 2$
    \begin{align}\label{eq:proxy-expansion-for-initial-composite-bound}
         \inner{y_s^{(t)}-x}{\tilde{h}^{(t),s}}& = \inner{y_s^{(t)}-x}{ \sum_{j=1}^m \lambda_{s,j}^{(t)}\nu_j^t }- \inner{y_s^{(t)}-x}{ \sum_{j=1}^m (\lambda_{s,j}^{(t)}-\lambda_{s-1,j}^{(t)})\nu_j^t }\\
         &+\inner{y_s^{(t)}-y_{s-1}^{(t)}}{\sum_{j=1}^m(\lambda_{s-1,j}^{(t)}-\lambda_{s-2,j}^{(t)})\nu_j^t}+\inner{y_{s-1}^{(t)}-x}{\sum_{j=1}^m(\lambda_{s-1,j}^{(t)}-\lambda_{s-2,j}^{(t)})\nu_j^t}\ , \nonumber
    \end{align}
   and for $s = 1$ 
       \begin{align}\label{eq:proxy-expansion-for-initial-composite-bound-s=1}
         \inner{y_1^{(t)}-x}{\tilde{h}^{(t),1}}& = \inner{y_1^{(t)}-x}{ \sum_{j=1}^m \lambda_{1,j}^{(t)}\nu_j^t }- \inner{y_1^{(t)}-x}{ \sum_{j=1}^m (\lambda_{1,j}^{(t)}-\lambda_{0,j}^{(t)})\nu_j^t }\\
         &+\rho^{(t)}\inner{y_1^{(t)}-y_{0}^{(t)}}{\sum_{j=1}^m(\lambda_{0,j}^{(t)}-\lambda_{-1,j}^{(t)})\nu_j^{t-1}}+\rho^{(t)}\inner{y_{0}^{(t)}-x}{\sum_{j=1}^m(\lambda_{0,j}^{(t)}-\lambda_{-1,j}^{(t)})\nu_j^{t-1}} \ . \nonumber
    \end{align}
Observe the third term is bounded by Young's inequality $(ab \leq \frac{a^2}{2\eps}+\frac{b^2\eps}{2} \text{ for all } \eps>0)$ with \begin{align}\label{eq:y-iterate-young-inequality}
\begin{split}
    \inner{y_s^{(t)}-y_{s-1}^{(t)}}{\sum_{j=1}^m (\lambda_{s-1,j}^{(t)}-\lambda_{s-2,j}^{(t)})\nu_j^t} \leq \frac{\beta^{(t)}\|y_s^{(t)}-y_{s-1}^{(t)}\|^2}{2}+\frac{\|\lambda_{s-1}^{(t)}-\lambda_{s-2}^{(t)}\|^2\|\nu^t\|^2}{2\beta^{(t)}} \ ,\\
 {\color{blue} \inner{y_1^{(t)}-y_{0}^{(t)}}{\sum_{j=1}^m (\lambda_{0,j}^{(t)}-\lambda_{-1,j}^{(t)})\nu_j^{t-1}} \leq \frac{\beta^{(t)}\|y_1^{(t)}-y_{0}^{(t)}\|^2}{2\rho^{(t)}}+\frac{\rho^{(t)}\|\lambda_{0}^{(t)}-\lambda_{-1}^{(t)}\|^2\|\nu^{t-1}\|^2}{2\beta^{(t)}} \ ,}
    \end{split}
\end{align}
{\color{blue}for $s \geq 2$ and $s = 1$ respectively.} {\color{blue} Moreover, note that when summing over $s = 1, ..., S_t$, \begin{align*}
     \sum_{s=1}^{S_t} \Lcal(y_{s}^{(t)}; \lambda_s^{(t)}, \nu^t) - \Lcal(x; \lambda_s^{(t)}, \nu^t) &=  \sum_{s=1}^{S_t} \inner{y_s^{(t)}-x}{\sum_{j=1}^m \lambda_{s,j}^{(t)}\nu_j^t}+u(y_s^{(t)})-u(x), \\
    \frac{\beta^{(t)}}{2}\left(\|y_{S_t}^{(t)}-x\|^2-\|y_{0}^{(t)}-x\|^2\right) &= \sum_{s=1}^{S_t} \frac{\beta^{(t)}}{2}\left(\|x-y_s^{(t)}\|^2-\|y_{s-1}^{(t)}-x\|^2\right),\\
    \sum_{s=2}^{S_t} \frac{\|\lambda_{s-1}^{(t)}-\lambda_{s-2}^{(t)}\|^2\|\nu^t\|^2}{2\beta^{(t)}} & \leq\displaystyle\sum_{s=2}^{S_t} \frac{\gamma^{(t)}}{2}\|\lambda_{s-1}^{(t)}-\lambda_{s-2}^{(t)}\|^2,
   \end{align*} 
   where the first equality holds by definition, the second holds from telescoping the norm squared terms, and the inequality holds from requirement \labelcref{cond:b3}. Furthermore, the inner product terms telescope as well since
   \begin{align*}
           \rho^{(t)}\inner{y_0^{(t)}-x}{\sum_{j=1}^m(\lambda_{0,j}^{(t)}-\lambda_{-1,j}^{(t)})\nu_j^{t-1}}&-\inner{y_{S_t}^{(t)}-x}{\sum_{j=1}^m(\lambda_{S_t,j}^{(t)}-\lambda_{S_t-1,j}^{(t)})\nu_j^{t}} \\
    &\hspace{-1cm}=\sum_{s=2}^{S_t} \inner{y_{s-1}^{(t)}-x}{\sum_{j=1}^m(\lambda_{s-1,j}^{(t)}-\lambda_{s-2,j}^{(t)})\nu_j^t} - \inner{y_s^{(t)}-x}{ \sum_{j=1}^m (\lambda_{s,j}^{(t)}-\lambda_{s-1,j}^{(t)})\nu_j^t }\\
    &\hspace{-0.75cm}+ \rho^{(t)}\inner{y_{0}^{(t)}-x}{\sum_{j=1}^m(\lambda_{0,j}^{(t)}-\lambda_{-1,j}^{(t)})\nu_j^{t-1}} -  \inner{y_1^{(t)}-x}{ \sum_{j=1}^m (\lambda_{1,j}^{(t)}-\lambda_{0,j}^{(t)})\nu_j^t },
   \end{align*} where the substitutions $\lambda_0^{(t+1)} = \lambda_{S_t}^{(t)}, \ \lambda_{-1}^{(t+1)}=\lambda_{S_t-1}^{(t)}, \ y_0^{(t+1)}=y_{S_t}^{(t)}$ hold by line 11 of \Cref{alg:FCM}.}

We then sum \labelcref{eq:initial-y-prox-bound} over $s = 1, \dots, S_t$. After plugging in \labelcref{eq:proxy-expansion-for-initial-composite-bound} and \labelcref{eq:proxy-expansion-for-initial-composite-bound-s=1}, applying the above {\color{blue} equalities and inequalities}, and considering requirement \labelcref{cond:b3}, we bound
   \begin{align}\label{eq:y-gap-bound}
      \sum_{s=1}^{S_t}& \left(\Lcal(y_s^{(t)}; \lambda_s^{(t)}, \nu^t)-\Lcal(x; \lambda_s^{(t)},  \nu^t)+\frac{\eta_t\|y_s^{(t)}-x^{t-1}\|^2+\eta_t\|y_s^{(t)}-x\|^2-\eta_t\|x-x^{t-1}\|^2}{2}\right) \nonumber\\
      &+\rho^{(t)}\inner{y_0^{(t)}-x}{\sum_{j=1}^m(\lambda_{0,j}^{(t)}-\lambda_{-1,j}^{(t)})\nu_j^{t-1}}-\inner{y_{S_t}^{(t)}-x}{\sum_{j=1}^m(\lambda_{S_t,j}^{(t)}-\lambda_{S_t-1,j}^{(t)})\nu_j^{t}}\\
      &\leq\displaystyle\sum_{s=2}^{S_t} \frac{\gamma^{(t)}}{2}\|\lambda_{s-1}^{(t)}-\lambda_{s-2}^{(t)}\|^2+\frac{{\color{blue}(\rho^{(t)})^2}\|\nu^{t-1}\|^2}{2\beta^{(t)}}\|\lambda_0^{(t)}-\lambda_{-1}^{(t)}\|^2
      -\frac{1}{2}\left[\beta^{(t)}\|y_{S_t}^{(t)}-x\|^2-\beta^{(t)}\|y_0^{(t)}-x\|^2\right] , \nonumber
   \end{align}
    since the terms $\|y_s^{(t)}-y_{s-1}^{(t)}\|^2$ cancel as \labelcref{eq:initial-y-prox-bound} and \labelcref{eq:y-iterate-young-inequality} have the same coefficients. 
   
Next we leverage this inner loop bound to derive bounds on the $\lambda_s^{(t)}$ terms. The proximal mapping in Line 9 of \Cref{alg:FCM} applies a proximal step to $$p(\lambda) := \inner{\lambda}{\nu^t(y_s^{(t)}-\underline{x}^t)+g(\underline{x}^t)}-h^*(\lambda) \ .$$
   From the Fenchel-Young inequality,  $\inner{\lambda}{ \nu^t y_s^{(t)}-g^*(\nu^t)} = \inner{\lambda}{\nu^t(y_s^{t}-\underline{x}^t)+g(\underline{x}^t)}$. Therefore, the proximal mapping in line 9, the three point inequality of~\citep[Lemma 3.5]{Lan.G}, and $L_h$-smoothness of $h$ imply the following is nonpositive
   $$\hLcal(y_s^{(t)}; \lambda,  \nu^t)-\hLcal(y_s^{(t)}; \lambda_s^{(t)}, \nu^t) + \frac{\gamma^{(t)}}{2}\left[\|\lambda-\lambda_s^{(t)}\|^2+\|\lambda_s^{(t)}-\lambda_{s-1}^{(t)}\|^2-\|\lambda-\lambda_{s-1}^{(t)}\|^2\right] + \muhstar\|\lambda_s^{(t)}-\lambda\|^2 \ .$$  
 Taking the sum over $s = 1, \dots, S_t$ and combining with \labelcref{eq:y-gap-bound},
   \begin{align*}
       &\sum_{s=1}^{S_t}  \left(\hLcal(y_s^{(t)}; \lambda,  \nu^t)-\hLcal(x; \lambda_s^{(t)},  \nu^t)+\frac{\eta_t\|y_s^{(t)}-x^{t-1}\|^2+\eta_t\|y_s^{(t)}-x\|^2-\eta_t\|x-x^{t-1}\|^2}{2}\right)\\&\hspace{0.2cm}+\sum_{s=1}^{S_t}\muhstar\|\lambda_s^{(t)}-\lambda\|^2+\rho^{(t)}\inner{y_0^{(t)}-x}{\sum_{j=1}^m(\lambda_{0,j}^{(t)}-\lambda_{-1,j}^{(t)})\nu_j^{t-1}}-\inner{y_{S_t}^{(t)}-x}{\sum_{j=1}^m(\lambda_{S_t,j}^{(t)}-\lambda_{S_{t}-1,j}^{(t)})\nu_j^{t}}\\
      &\hspace{0.2cm}\leq \frac{\gamma^{(t)}}{2}\|\lambda-\lambda_0^{(t)}\|^2-\frac{\gamma^{(t)}}{2}\left[\|\lambda-\lambda_{S_t}^{(t)}\|^2+\|\lambda_{S_t}^{(t)}-\lambda_{S_t-1}^{(t)}\|^2\right]+\frac{{\color{blue}(\rho^{(t)})^2}\|\nu^{t-1}\|^2}{2\beta^{(t)}}\|\lambda_0^{(t)}-\lambda_{-1}^{(t)}\|^2
      \\&\hspace{1.5cm}-\frac{\beta^{(t)}}{2}\left[\|y_{S_t}^{(t)}-x\|^2-\|y_0^{(t)}-x\|^2\right] \ ,
    \end{align*}
   {\color{blue} noting that the $\|\lambda-\lambda_s^{(t)}\|^2$ terms telescope and $\gamma^{(t)}/2\sum_{s=2}^{S_t}\|\lambda_{s-1}^{(t)}-\lambda_{s-2}^{(t)}\|^2$ cancels, leaving only $\gamma^{(t)}/2\|\lambda_{S_t}^{(t)}-\lambda_{{S_t}-1}\|^2$.}
    Since $\hLcal(y_s^{(t)}; \lambda,  \nu^t)$ and $\|y_s^{(t)}-x\|^2$ are convex in $y_s^{(t)}$ and $\Lcal(x;\lambda_s^{(t)},\nu^t)$ is {\color{blue} concave} in $\lambda_s^{(t)}$, multiplying by $\Tilde{\omega}^{(t)}=\omega_t/S_t$ and considering the averaging scheme in line 14 of \Cref{alg:FCM}, one can apply Jensen's inequality to derive a bound with respect to $x^t$ and $\Tilde{\lambda}^t$ of
    \begin{align}
        &\omega_t\left(\hLcal(x^t; \lambda,  \nu^t)-\hLcal(x; \Tilde{\lambda}^t, \nu^t)+\frac{\eta_t}{2}\left({\|x^t-x^{t-1}\|^2+\|x^t-x\|^2-\|x^{t-1}-x\|^2}\right)\right)+\sum_{s=1}^{S_t}\frac{\omega_t\|\lambda-\lambda_s^{(t)}\|^2}{2L_hS_t}\nonumber\\
        &+\Tilde{\omega}^{(t)}\rho^{(t)}\inner{y_0^{(t)}-x}{\sum_{j=1}^m(\lambda_{0,j}^{(t)}-\lambda_{-1,j}^{(t)})\nu_j^{t-1}}-\Tilde{\omega}^{(t)}\inner{y_{S_t}^{(t)}-x}{\sum_{j=1}^m(\lambda_{S_t,j}^{(t)}-\lambda_{S_t-1,j}^{(t)})\nu_j^{t}}\nonumber \\
      &\leq \frac{\Tilde{\omega}^{(t)}\gamma^{(t)}}{2}\left(\|\lambda-\lambda_0^{(t)}\|^2-\|\lambda-\lambda_{S_t}^{(t)}\|^2-\|\lambda_{S_t}^{(t)}-\lambda_{S_t-1}^{(t)}\|^2\right)\nonumber \\
      &+\frac{\Tilde{\omega}^{(t)}{\color{blue}(\rho^{(t)})^2}\|\nu^{t-1}\|^2}{2\beta^{(t)}}\|\lambda_0^{(t)}-\lambda_{-1}^{(t)}\|^2-\frac{\Tilde{\omega}^{(t)}\beta^{(t)}}{2}\left[\|y_{S_t}^{(t)}-x\|^2-\|y_0^{(t)}-x\|^2\right] \ .
    \end{align}
    Next, we note that {\color{blue} \begin{align*}
      \hQ_x(z^t, z)+\hQ_\lambda(z^t,z) &=  \hLcal(x^t; \lambda,  \nu^t)-\hLcal(x; \Tilde{\lambda}^t,  \nu^t),\\
     \frac{\omega_T\eta_T}{2}\|x^T-x\|^2-\frac{\omega_1\eta_1}{2}\|x_0-x\|^2&\leq \sum_{t=1}^T \frac{\omega_t \eta_t}{2}\left(\|x^t-x\|^2-\|x^{t-1}-x\|^2\right)
    \end{align*}
       where the first equality comes from definition and the inequality comes from telescoping along with requirement \labelcref{cond:a1}. 
      Then by telescoping, we produce the following bounds \begin{align*}
            \sum_{t=1}^T \frac{\Tilde{\omega}^{(t)}\beta^{(t)}}{2}\left(\|y_0^{(t)}-x\|^2-\|y_{S_t}^{(t)}-x\|^2\right) &\leq \frac{\Tilde{\omega}^{(1)}\beta^{(1)}}{2}\|y_0^{(1)}-x\|^2 - {\color{blue}\frac{\Tilde{\omega}^{(T)}\beta^{(T)}}{2}\|y_{S_T}^{(T)}-x\|^2 }\\
       \sum_{t=1}^T \frac{\Tilde{\omega}^{(t)}\gamma^{(t)}}{2}\left(\|\lambda-\lambda_0^{(t)}\|^2-\|\lambda-\lambda_{S_t}^{(t)}\|^2\right) &\leq \frac{\Tilde{\omega}^{(1)}\gamma^{(1)}}{2}\|\lambda-\lambda_0^{(1)}\|^2 - {\color{blue}\frac{\Tilde{\omega}^{(T)}\gamma^{(T)}}{2}\|\lambda-\lambda_{S_T}^{(T)}\|^2 }\\
        &\hspace{-6cm}\sum_{t=1}^T -\frac{\Tilde{\omega}^{(t)}\gamma^{(t)}}{2}\|\lambda_{S_t}^{(t)}-\lambda_{S_t-1}^{(t)}\|^2+\frac{\Tilde{\omega}^{(t)}(\rho^{(t)})^2\|\nu^{t-1}\|^2}{2\beta^{(t)}}\|\lambda_0^{(t)}-\lambda_{-1}^{(t)}\|^2 \\&\leq \sum_{t=1}^T \frac{\Tilde{\omega}^{(t)}\gamma^{(t)}}{2}\left(\|\lambda_0^{(t)}-\lambda_{-1}^{(t)}\|^2-\|\lambda_{S_t}^{(t)}-\lambda_{S_t-1}^{(t)}\|^2\right)\\
           &\leq \frac{\Tilde{\omega}^{(1)}\gamma^{(1)}}{2}\|\lambda_0^{(1)}-\lambda_{-1}^{(1)}\|^2 - {\color{blue}\frac{\Tilde{\omega}^{(T)}\gamma^{(T)}}{2}\|\lambda_{S_T}^{(T)}-\lambda_{S_T-1}^{(T)}\|^2 }
\end{align*}
where the first two inequalities apply \labelcref{cond:c1} and \labelcref{cond:c2} respectively, while the following bound applies requirement \labelcref{cond:c3},   line 11 of Algorithm~\ref{alg:FCM}, and uses \labelcref{cond:c2} to telescope.

Finally for $t \geq 2$, by line 11 of Algorithm~\ref{alg:FCM} and the second condition of \labelcref{cond:c3},
    \begin{align*}
        \Tilde{\omega}^{(t)}\rho^{(t)}\inner{y_0^{(t)}-x}{\sum_{j=1}^m(\lambda_{0,j}^{(t)}-\lambda_{-1,j}^{(t)})\nu_j^{t-1}}-\Tilde{\omega}^{(t)}\inner{y_{S_t}^{(t)}-x}{\sum_{j=1}^m(\lambda_{S_t,j}^{(t)}-\lambda_{S_t-1,j}^{(t)})\nu_j^{t}} &=\\
        \Tilde{\omega}^{(t-1)}\inner{y_{S_{t-1}}^{(t-1)}-x}{\sum_{j=1}^m(\lambda_{S_{t-1},j}^{(t-1)}-\lambda_{S_{t-1}-1,j}^{(t-1)})\nu_j^{t-1}}-\Tilde{\omega}^{(t)}\inner{y_{S_t}^{(t)}-x}{\sum_{j=1}^m(\lambda_{S_t,j}^{(t)}-\lambda_{S_t-1,j}^{(t)})\nu_j^{t}}.
    \end{align*} Since $\lambda_{-1}^{(1)}=\lambda_{0}^1$ by initialization, summing the inner product terms over $t= 1, ..., T$ results in $-\Tilde{\omega}^{(T)}\inner{y_{S_T}^{(T)}-x}{\sum_{j=1}^m (\lambda_{S_T,j}^{(T)}-\lambda_{S_T-1,j}^{(T)})\nu_j^T}$. Rearranging, applying Young's inequality, using requirement \labelcref{cond:b3}, and combining with the bounds outlined above results in}
    \begin{align}
    \sum_{t=1}^T \omega_t [\hQ_x(z^t,z)+\hQ_\lambda(z^t,z)]&+\sum_{t=1}^T\sum_{s=1}^{S_t}\frac{\omega_t}{S_t}\muhstar\|\lambda-\lambda_s^{(t)}\|^2+\sum_{t=1}^T \frac{\omega_t\eta_t}{2} \|x^t-x^{t-1}\|^2\nonumber \\
    &\hspace{-2cm}+\frac{\omega_T\eta_T}{2}\|x^T-x\|^2-\frac{\omega_1\eta_1}{2}\|x^0-x\|^2 \leq \frac{\Tilde{\omega}^{(1)}}{2}\left(\gamma^{(1)}\|\lambda_0^{(1)}-\lambda\|^2+\beta^{(1)}\|y_0^{(1)}-x\|^2\right) \ . \nonumber
\end{align}


\subsubsection*{\textit{Proof of \cref{lemma:nu-convergence-bound}}.}

     Recall that $\Tilde{x}^t=x^{t-1}+\theta_t(x^{t-1}-x^{t-2})$. Thus,
        \begin{align*}
            \inner{\nu_j-\nu_j^t}{(\Tilde{x}^t-x^t)} 
            &=-\inner{\nu_j-\nu_j^t}{(x^t-x^{t-1})}+\theta_t\inner{\nu_j-\nu_j^{t-1}}{(x^{t-1}-x^{t-2})}\\&+\theta_t\inner{\nu_j^{t-1}-\nu_j^{t}}{(x^{t-1}-x^{t-2})} \ .
        \end{align*}
        Using~\citep[Lemma 3.5]{Lan.G} and that $g_j^*$ is strongly convex with modulus 1 with respect to the Bregman divergence $U_{g_j^*}$, the proximal mapping $$ \nu_j^t \leftarrow \argmax_{\nu_j \in V_j} \inner{\nu_j}{\tilde{x}^t}-g_j^*(\nu_j)-\tau_tU_{g_j^*}(\nu;\nu^{t-1}) \quad \forall j \in \{1, \dots, m\} \ ,$$
       which is equivalent to line 4 of \Cref{alg:FCM}, satisfies $j \in \{1, \dots, m\}$ \begin{align*}
            \inner{\nu_j-\nu_j^t}{x^t}+&\inner{\nu_j-\nu_j^t}{\Tilde{x}^t-x^t}+g_{j}^*(\nu_j^t)-g_j^*(\nu_j)\\
            &\leq \tau_tU_{g_j^*}(\nu_j;\nu_j^{t-1})-(\tau_t+1)U_{g_j^*}(\nu_j;\nu_j^t)-\tau_tU_{g_j^*}(\nu_j^t;\nu_j^{t-1}) \ .
        \end{align*}
        Summing over $t = 1, \dots, T$ with weights $\omega_t$ yields
         \begin{align*}
            &\sum_{t=1}^T \omega_t\left[\inner{\nu_j-\nu_j^t}{x^t}+g_{j}^*(\nu_j^t)-g_j^*(\nu_j)\right]\\
            &+\sum_{t=1}^T -\omega_t\inner{\nu_j-\nu_j^t}{(x^t-x^{t-1})}+\sum_{t=1}^T\omega_t\theta_t\inner{\nu_j-\nu_j^{t-1}}{(x^{t-1}-x^{t-2})}\\&+\sum_{t=1}^T\omega_t\theta_t\inner{\nu_j^{t-1}-\nu_j^{t}}{(x^{t-1}-x^{t-2})}\\
            &\leq \sum_{t=1}^T \omega_t\left[\tau_tU_{g_j^*}(\nu_j;\nu_j^{t-1})-(\tau_t+1)U_{g_j^*}(\nu_j;\nu_j^t)-\tau_tU_{g_j^*}(\nu_j^t;\nu_j^{t-1})\right] \ .
        \end{align*}
        Applying requirements \labelcref{cond:a2} and \labelcref{cond:a3}, one can conclude that
        \begin{align*}
            \sum_{t=1}^T \omega_t\left[\inner{\nu_j-\nu_j^t}{x^t}+g_{j}^*(\nu_j^t)-g_j^*(\nu_j)\right]&\leq -\left[\omega_T(\tau_T+1)U_{g_j^*}(\nu_j;\nu_j^T)-\omega_T\inner{\nu_j-\nu_j^T}{x^T-x^{T-1}}\right]\\
            &\hspace{-3cm}-\left[\sum_{t=1}^T\omega_t\tau_tU_{g_j^*}(\nu_j^t;\nu_j^{t-1})-\omega_{t-1}\inner{\nu_j^{t-1}-\nu_j^{t}}{(x^{t-1}-x^{t-2})}\right]+\omega_1\tau_1U_{g_j^*}(\nu_j,\nu_j^0) \ .
        \end{align*}
        Taking the sum over $j = 1, \dots, m$ with weights $\lambda_j$ yields the desired result.

\subsubsection*{\textit{Proof of \cref{prop:Q-convergence-bound}}.}
    From \cref{lemma:zhang-smoothness-growth-lemma}, any $\nu \in V$ satisfies
        $$\sum_{j=1}^m \lambda_jU_{g_{j}^*}(\nu;\nu_j^{T}) \geq \frac{1}{2\Lada}\left\|\sum_{j=1}^m \lambda_j(\nu_j-\nu_j^{T})\right\|^2 \ .$$
        Therefore, by \cref{lemma:nu-convergence-bound} and requirement \labelcref{cond:a3},
           \begin{align}\label{eq:Q_nu-cocoersive_bregman_bound_appendix}
           \begin{split}
              \sum_{t=1}^T \omega_t\left[\hQ_\nu(z^t,z)\right]&\leq \left[ \omega_T\inner{\sum_{j=1}^m \lambda_j(\nu_j-\nu_j^T)}{x^T-x^{T-1}}- \frac{\omega_T(\tau_T+1)}{2\Lada}\left\|\sum_{j=1}^m \lambda_j(\nu_j-\nu_j^{T})\right\|^2\right]\\         &\hspace{-2cm}+\sum_{t=2}^T\left[\omega_{t-1}\inner{\sum_{j=1}^m \lambda_j(\nu_j^{t-1}-\nu_j^{t})}{(x^{t-1}-x^{t-2})}-\frac{\omega_{t-1}\tau_t}{2\theta_t\Lada}\left\|\sum_{j=1}^m \lambda_j(\nu_j^t-\nu_j^{t-1})\right\|^2\right]\\&\hspace{-2cm}+\omega_1\tau_1\left(\sum_{j=1}^m \lambda_jU_{g_{j}^*}(\nu_j,\nu_j^0)\right) \ .
                 \end{split}
        \end{align}
        Applying Young's inequality to the inner product yields
        \begin{align}\label{eq:Q_nu-bound-Young's-inequality}
              \sum_{t=1}^T \omega_t\left[\hQ_\nu(z^t,z)\right]&\leq \frac{\omega_T\Lada}{2(\tau_T+1)}\|x^T-x^{T-1}\|^2+\sum_{t=1}^{T-1} \frac{\omega_t\theta_{t+1}\Lada}{2\tau_{t+1}}\|x^t-x^{t-1}\|^2+\omega_1\tau_1\inner{\lambda}{U_{g^*}(\nu_j,\nu_j^0)} \ .
        \end{align}
       Utilizing the stepsize conditions \labelcref{cond:a3} and \labelcref{cond:a4}, combining with \cref{lemma:x-lambda-convergence}, and the fact that $y_0^{(1)}=x^0$ and $\lambda_0^{(1)}=\lambda^0$, achieves the desired bound.

\subsection{Proofs Deferred for Heterogeneously Smooth Composite Analysis}\label{section:appendix-heterogeneous-comp-proofs}

{\color{blue}\subsubsection*{\textit{Proof of \cref{prop:compactness of iterates general}}.}
Similar to the proof of \cref{lemma:h-smooth-lambda-iterate-bound}, we note the bound from \cref{prop:general-Holder-Q-bound}. Considering $\tau_1 = 0$ and the nonnegativity of $L_h$, the associated norm, and $Q(z^t,z^\star)$, it holds for all $T \geq 1$, \begin{align*}
   \frac{\omega_T\eta_T}{2}\|x^T-x^\star\|^2 &\leq \frac{\Tilde{\omega}^{(1)}\beta^{(1)}+\omega_1\eta_1}{2}\|x^0-x^\star\|^2+\frac{\Tilde{\omega}^{(1)}\gamma^{(1)}}{2}\|\lambda^0-\lambda^\star\|^2 +\frac{\delta}{2}\left[\omega_T(\tau_T+1)+\sum_{t=2}^T \omega_t\tau_t\right].
\end{align*}
 Using the stepsize conditions \labelcref{eq:outerloop-stepsizes} and \labelcref{eq:innerloop-stepsizes} and our distance bounds, and letting $N_\eps = \sqrt{\frac{24 \Lada D_x^2}{\eps}}$ then \begin{align*}
      \|x^T-x^\star\|^2 &\leq \frac{1}{2\Lada}\left[(C/\Delta + 2\Lada)D_x^2 + \frac{1}{C\Delta}D_\lambda^2+ \frac{12\Lada D_x^2}{N_\eps^3}\left(\frac{T^3+3T^2+2T}{6}\right) \right], \quad \forall T \geq 1, 
 \end{align*}
where we bounded $\left[\omega_T(\tau_T+1)+\sum_{t=2}^T \omega_t\tau_t\right] \leq \frac{T^3+3T^2+2T}{6}$ with specialized $\delta = \eps/N_\eps$. Finally considering particular iterate $t \geq 1$, we can further bound the rightmost product by $48\Lada D_x^2$ for any $t \leq \lceil N_\eps \rceil$.

The remainder of the proof follows analogously to the proof of \cref{lemma:h-smooth-lambda-iterate-bound}.}

\subsubsection*{\textit{Proof of \cref{lemma:Holder-smoothness-growth}}.}

    Let $\lambda \in \Lambda_r$ and $\bar{g}(x)=\sum_{j=1}^m \lambda_jg_j$.
    Moreover, consider any $\delta >0$ and $$L_\lambda \geq \sum_{j=1}^m \left[\left[\frac{1-p_j}{1+p_j}\cdot\frac{m}{\delta}\right]^\frac{1-p_j}{1+p_j}\lambda_j^\frac{2}{1+p_j}L_j^\frac{2}{1+p_j}\right] \ . $$
    Since $\nabla \bar{g}(y)=\sum_{j=1}^m \lambda_j \nabla g_j(y)$ for all $y \in X$, letting $\nu = \nabla g(x)$ and $\hat{\nu}=\nabla g(\hat{x})$, one has that
    \begin{align*}
        \inner{\lambda}{U_{g^*}(\nu;\hat{\nu})}
        = \sum_{j=1}^m \lambda_jU_{g_j^*}(\nu_j;\hat{\nu}_j) 
        = \sum_{j=1}^m \lambda_j U_{g_j}(\hat{x};x)
        = U_{\bar{g}}(\hat{x};x) 
        \geq \frac{1}{2L_\lambda}\|\sum_{j=1}^m \lambda_j(\nu_j-\hat{\nu}_j)\|^2-\frac{\delta}{2}
    \end{align*}
    where the second equality follows from~\citep[Appendix A.2]{bregman_duality_paper} and the inequality from using the cocoercivity condition in Lemma~\ref{lemma:cocoercive-Holder}.     Noting in \labelcref{eq:general-heterogeneous-smoothness-constant}, $L_{\delta,r} \geq L_\lambda$ for all $\lambda \in \Lambda_r$ gives the desired bound.

\subsubsection*{\textit{Proof of \cref{lemma:nu-convergence-Holder}}.}  
    This result follows analogously {\color{blue} to deriving~\eqref{eq:Q_nu-cocoersive_bregman_bound_appendix}} in the proof of \cref{prop:Q-convergence-bound}, with the modification of noting that by \cref{lemma:Holder-smoothness-growth}, for any $\nu \in V$ and $\delta>0$,
        $$\sum_{j=1}^m \lambda_jU_{g_j^*}(\nu;\nu_j^{T}) \geq \frac{1}{2L_{\delta,r}}\left\|\sum_{j=1}^m \lambda_j(\nu_j-\nu_j^{T})\right\|^2-\frac{\delta}{2} \ .$$\\
      By \cref{lemma:nu-convergence-bound} and requirement  \labelcref{cond:a3}, with the above substitution
          \begin{align*}
              \sum_{t=1}^T \omega_t\left[\hQ_\nu(z^t,z)\right] &\leq \omega_T(\tau_T+1)\left[\frac{\delta}{2}- \frac{1}{2L_{\delta,r}}\left\|\sum_{j=1}^m \lambda_j(\nu_j-\nu_j^{T})\right\|^2\right] \\&\hspace{-0.5cm}+\omega_T\inner{\sum_{j=1}^m \lambda_j(\nu_j-\nu_j^T)}{x^T-x^{T-1}}      +\sum_{t=2}^T\biggl[\omega_t\tau_t\left(\frac{\delta}{2}-\frac{1}{2L_{\delta,r}}\left\|\sum_{j=1}^m \lambda_j(\nu_j^t-\nu_j^{t-1})\right\|^2\right)\\&\hspace{-0.5cm}+\omega_{t-1}\inner{\sum_{j=1}^m \lambda_j(\nu_j^{t-1}-\nu_j^{t})}{(x^{t-1}-x^{t-2})}\biggr]+\omega_1\tau_1\left(\sum_{j=1}^m \lambda_jU_{g_j^*}(\nu_j,\nu_j^0)\right) \ .
        \end{align*} 
        Rearranging and applying Young's inequality (analogous to~\eqref{eq:Q_nu-bound-Young's-inequality}) concludes the proof.

\end{document}